\documentclass[reqno]{amsart}
\usepackage{amsmath}
\usepackage{amssymb}
\usepackage{amsthm}
\usepackage{latexsym}
\usepackage{hyperref}
\usepackage{enumerate}
\usepackage{graphicx}
\usepackage[all]{xy}
\usepackage{tikz-cd} 
\usepackage{color}
\usepackage{verbatim}
\usepackage{mathtools}

\usepackage[margin=1in]{geometry}
\usepackage[mathscr]{euscript}

\newtheorem{thm}{Theorem}[section]
\newtheorem{cor}[thm]{Corollary}
\newtheorem{lem}[thm]{Lemma}
\newtheorem{prop}[thm]{Proposition}
\newtheorem{conj}[thm]{Conjecture}
\newtheorem{hyp}[thm]{Hypothesis}

\DeclareMathAlphabet\mathbfcal{OMS}{cmsy}{b}{n}
\theoremstyle{definition}
\newtheorem{defn}[thm]{Definition}
\newtheorem{rem}[thm]{Remark}

\newtheorem{ex}[thm]{Example}

\newcommand{\CC}{{\mathbb{C}}}

\newcommand{\bZ}{{\mathbb{Z}}}
\newcommand{\bR}{{\mathbb{R}}}

\newcommand{\Fq}{{\mathbb{F}_q}}

\newcommand{\GL}{{\mathsf{GL}}}

\newcommand{\red}{{\mathrm{red}}}

\newcommand{\SL}{{\mathsf{SL}}}
\newcommand{\Ind}{{\mathrm{Ind}}}

\newcommand{\cH}{{\mathcal{H}}}
\newcommand{\Hom}{{\mathrm{Hom}}}
\newcommand{\Gal}{{\mathrm{Gal}}}
\newcommand{\PGL}{{\mathsf{PGL}}}

\newcommand{\SO}{{\mathsf{SO}}}

\newcommand{\FT}{{\mathrm{FT}}}

\newcommand{\Irr}{{\mathrm{Irr}}}

\newcommand{\Nor}{{\mathrm{N}}}
\newcommand{\Cent}{{\mathrm{Cent}}}
\newcommand{\rZ}{{\mathrm{Z}}}

\newcommand{\bG}{{\mathbf{G}}}
\newcommand{\bH}{{\mathbf{H}}}

\newcommand{\cA}{{\mathcal{A}}}
\newcommand{\cC}{{\mathcal{C}}}

\newcommand{\cL}{{\mathcal{L}}}
\newcommand{\cG}{{\mathcal{G}}}
\newcommand{\cR}{{\mathcal{R}}}

\newcommand{\cY}{{\mathcal{Y}}}

\newcommand{\ellip}{{\mathrm{ell}}}

\newcommand{\St}{{\mathrm{St}}}

\newcommand{\Ad}{{\mathrm{Ad}}}

\newcommand{\der}{{\mathrm{der}}}
\newcommand{\ind}{{\mathrm{ind}}}

\newcommand{\InnT}{{\mathrm{InnT}}}

\newcommand{\EP}{{\mathrm{EP}}}
\newcommand{\Ext}{{\operatorname{Ext}}}
\newcommand{\tr}{{\mathrm{tr}}}

\newcommand{\sgn}{{\mathrm{sgn}}}

\newcommand{\un}{{\mathrm{un}}}
\newcommand{\Frob}{{\mathrm{Frob}}}

\newcommand{\al}{{\alpha}}

\newcommand \wti[1]{{\widetilde {#1}}}

\newcommand{\bC}{\mathbb{C}}
\newcommand{\bF}{\mathbb{F}}
\newcommand{\bT}{\mathbf{T}}
\newcommand{\bM}{{\mathbf{M}}}
\newcommand{\lam}{\lambda}
\DeclareMathOperator{\res}{res}
\newcommand{\la}{\langle}
\newcommand{\ra}{\rangle}
\newcommand{\ess}{{\mathrm{ess}}}
\DeclareMathOperator{\Ell}{Ell}
\DeclareMathOperator{\im}{im}

\newcommand{\Ress}{\mathcal{R}_{\mathrm{ess}}^p}
\newcommand{\Rcpt}{\mathcal{R}_{\mathrm{cpt}}^p}

\newcommand{\cO}{\mathcal{O}}

\newcommand{\cpt}{{\mathrm{cpt}}}

\usepackage{marginnote}

\usepackage[alphabetic,lite]{amsrefs}

\usepackage{color}
	\definecolor{bethorange}{rgb}{0.98, 0.3, 0.0}
	\definecolor{amblue}{rgb}{0.32,0.09,0.98}

\numberwithin{equation}{section}

\begin{document}

\bigskip

\title[A tempered Fourier transform]{A nonabelian Fourier transform for tempered unipotent representations, II}

\author{Anne-Marie Aubert}
\address[A.-M. Aubert]{Sorbonne Universit\'e and Universit\'e Paris Cit\'e, CNRS,
Institut de Math\'ematiques de Jussieu -- Paris Rive Gauche,  
F-75006 Paris, France}
\email{anne-marie.aubert@imj-prg.fr}

\author
{Dan Ciubotaru}
        \address[D. Ciubotaru]{Mathematical Institute, University of Oxford, Oxford OX2 6GG, UK}
        \email{dan.ciubotaru@maths.ox.ac.uk}

\author
{Beth Romano}
        \address[B. Romano]{Department of Mathematics, King's College London, London WC2R 2LS, UK}
        \email{beth.romano@kcl.ac.uk}

\begin{abstract} We extend the construction of the nonabelian Fourier transform proposed in our previous paper from the space of unipotent elliptic characters to the space of unipotent compact characters of a reductive $p$-adic group, and we investigate its relation with the branching to maximal compact open subgroups. We prove that this Fourier transform is compatible with parahoric restriction for certain groups, including $\mathrm{SL}_n, \mathrm{GL}_n$, and $G_2$. We also relate the compact setting to the elliptic setting using parabolic induction.
\end{abstract}

\keywords{nonabelian Fourier transform, unipotent representations, elliptic representations}

\subjclass[2010]{22E50, 20C33}

\maketitle

\setcounter{tocdepth}{1}
\tableofcontents

\section{Introduction}

Unipotent representations of $p$-adic groups, first defined in \cite{LuI}, exhibit the beautiful relations between representations of $p$-adic groups and representations of finite groups of Lie type. To understand them, a natural and often-used strategy is to lift information from these finite groups, whose representation theory is well understood via Deligne--Lusztig theory. In particular, an outstanding question is to give a ``local proof'' of the stability in $L$-packets in the local Langlands correspondence for Lusztig's unipotent representations. This was proven in \cite{MW} for the group $\SO_{2n + 1}$, and one of the main ingredients of the proof was a lift of Lusztig's nonabelian Fourier transform from the setting of finite reductive groups to that of the $p$-adic group $\SO_{2n + 1}$. 

Inspired by this, our previous paper \cite{ACR} (building on \cite{Wa2} and \cite{Ciu}) proposed a lift of Lusztig's nonabelian Fourier transform for the $p$-adic groups that arise as pure inner twists of a split simple group $\bG$. This lift is an involution defined on the space of unipotent elliptic characters, which is isomorphic to the quotient of the Grothendieck group of unipotent representations by the space of parabolically induced representations.

In this paper, we first extend the involution of \cite{ACR} to the essentially elliptic unipotent representation space of reductive groups (inner to split) and then to the space of unipotent compact characters, which is isomorphic to the rigid quotient studied in \cite{CH2}. To do so, we first describe a basis for this space in terms of triples of elements in the dual group $G^\vee$ built out of the (essentially) elliptic pairs, as defined for semisimple groups in \cite{ACR}, for all Levi subgroups $M^\vee$ of $G^\vee$. We then give evidence that our proposed involution is compatible with Lusztig's nonabelian Fourier transform via restriction to maximal compact subgroups. This is done via the induction and restriction functors that allow us to relate the action of the nonabelian Fourier transform on the (essentially) elliptic spaces for the Levi subgroups to the compact space for the group itself.

\subsection{} To give more detail, let $F$ be a nonarchimedean local field with residue field $\mathbb F_q$. Let us assume that $\bG$ is a split connected reductive group over $F$ and $G = \bG(F)$. Let $\InnT^p(G)$ denote the set of equivalence classes of pure inner twists of $G$. The unramified local Langlands correspondence partitions the set of (equivalence classes of) irreducible tempered  unipotent $G'(F)$-representations, $G' \in \InnT^p(G)$, into $L$-packets $\{\pi(su,\phi)\mid \phi\in \widehat {A_{G^\vee}(su)}\}$ labelled by the $G^\vee$-conjugacy classes of elements $x=su \in G^\vee$ (Jordan decomposition) such that $s$ is compact. The elements in the $L$-packet are parametrized by irreducible representations $\phi$ of the group of components $A_{G^\vee}(x)$ of the centralizer of $x$ in $G^\vee$.  Let $\Gamma_u$ denote the reductive part of the centralizer of $u$ in $G^\vee$. In previous works, one considered the set $\cY(\Gamma_u)$ of pairs $(s,h)\in \Gamma_u^2$ of commuting semisimple elements  and the subset $\cY(\Gamma_u)_\ellip$ of elliptic pairs  (see Section \ref{s:ellipticpairs}). 

Each group $G'\in \InnT^p(G)$ has a finite collection of conjugacy classes of maximal compact open subgroups $\max(G')$.  A compact group $K\in \max(G')$ has a finite quotient $\overline K$ that is the group of $\mathbb{F}_q$-points of a (possibly disconnected) reductive group over $\mathbb{F}_q$. Write $R_\un(\overline{K})$ for the $\bC$-vector space spanned by the irreducible unipotent representations of $\overline K$. In \cite{ACR}, we defined an extension of Lusztig's nonabelian Fourier transform \cite{Lubook} to disconnected finite groups in the spirit of \cite{Lu86} and \cite[Section 5]{DM}:
\begin{equation}
\FT_{\cpt,\un}\colon\cC(G)_{\cpt,\un}\to \cC(G)_{\cpt,\un}, 
\end{equation}
on the space 
\begin{equation*}
    \cC(G)_{\cpt,\un}=\bigoplus_{G' \in \InnT^p(G)} \bigoplus_{K \in \max (G')} R_\un(\overline K),
\end{equation*} which we can think of as the sum over $K\in\max(G)$ of the unipotent representation spaces of the pure inner twists of $\overline K$.
In general, $\FT_{\cpt,\un}$ mixes the pure inner twists of a given $\overline K$.

In the case when $\bG$ is semisimple and adjoint, we showed in \cite{ACR} that the local Langlands correspondence induces an isometric isomorphism (for the respective elliptic pairings)
\begin{equation}\label{e:ACR-ell}
\overline{\mathsf{LLC}^p}_{\un}:\bigoplus_{u}\CC[\cY(\Gamma_u)_\ellip]^{\Gamma_u}\longrightarrow \cR^p_{\ellip}(G),\ (s,h)\mapsto \Pi(u,s,h),
\end{equation}
where $\cR^p_{\ellip}(G)=\bigoplus_{G'\in\InnT^p(G)} \overline R_\un(G')$ is the space of elliptic (unipotent) tempered representations for all pure inner twists. The element $u$ ranges over representatives of unipotent conjugacy classes in $G^\vee$ and $\Pi(u,s,h)$ are the endoscopic virtual characters, as defined in (\ref{eqn:pi_ush}). We now show that this isomorphism holds for all split, semisimple $\bG$, thus proving Conjecture 9.1 of \cite{ACR} (see Theorem \ref{t:ess}). 

\medskip

To extend the isomorphism, we introduce the larger space of {\em compact pairs}, $\cpt(G^\vee) := \bigcup_{u \in G^\vee_{\un}} \cY(\Gamma_u)/_\sim$ (see Section \ref{s:ellipticpairs}), with which we replace the left-hand side of (\ref{e:ACR-ell}). The corresponding representation-theoretic space is that of compact representations 
\[
 \Rcpt(G)=\text{span}_\bC\{ \Pi(u,s,h)_c\mid u\in G^\vee_\un/G^\vee,\ (s,h)\in \cY(\Gamma_u)/_\sim\}.
 \]
 where $\Pi(u,s,h)_c$ denotes the character restriction to the space $\cC(G)_{\cpt,\un}$. As shown in Proposition \ref{p:cpt-space}, the space $\Rcpt(G)$ is naturally isomorphic to the rigid quotient space studied in \cite{CH2} from the perspective of the trace Paley--Wiener Theorem, see Section \ref{s:comact-space}. More specifically, we define $\overline{R}(G)_{\un, c}^p$ as in Section \ref{s:comact-space} by taking the quotient of the Grothendieck group of unipotent representations by the $\bC$-span of $i_{M'}^{G'}(\sigma) - i_{M'}^{G'}(\sigma \otimes \chi)$, where $G'$ ranges over $\InnT^p(G)$, $M'$ ranges over the set of standard Levi subgroups of $G'$, $\sigma \in R_{\un}(M')$, and $\chi$ is an unramified character. By studying the compatibility of the constructions with respect to parabolic induction, we prove:
 \begin{thm}[Proposition \ref{p:cpt-space}]\label{t:cpt-intro}
    The map $\overline{\mathsf{LLC}^p}_{\un}$ can be lifted to an isomorphism
     \[
     \overline{R}(G)_{\un, c}^p \cong \Rcpt(G)\cong \bigoplus_{u \in G^\vee_{\un}/G^\vee} \bC[\cY(\Gamma_u)/_\sim]^{\Gamma_u}.
     \]
 \end{thm}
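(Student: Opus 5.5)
The plan is to reduce the compact statement to the (essentially) elliptic isomorphism for all Levi subgroups, via a Bernstein-type decomposition of the rigid quotient. The tool on the representation side is the Paley--Wiener description of $\overline{R}(G)^p_{\un,c}$ from \cite{CH2}: modulo the relations $i_{M'}^{G'}(\sigma)-i_{M'}^{G'}(\sigma\otimes\chi)$, the Grothendieck group decomposes as
\[
\overline{R}(G)^p_{\un,c}\cong\bigoplus_{M}\Big(\overline{R}^p_{\ess}(M)\Big)_{W_G(M)},
\]
where $M$ runs over standard Levi subgroups up to conjugacy and $\overline{R}^p_{\ess}(M)$ is the essentially elliptic unipotent space of $M$ and its pure inner twists. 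Indeed, every tempered unipotent representation is a constituent of $i_M^G$ of an essentially discrete series, and the unramified twist relation kills exactly the dependence on the central unramified character. On the dual side, I would likewise decompose $\bC[\cY(\Gamma_u)/_\sim]^{\Gamma_u}$. For a commuting semisimple pair $(s,h)$ in $\Gamma_u$, take a maximal torus of $Z_{\Gamma_u}(s,h)^\circ$ and let $M^\vee=Z_{G^\vee}$ of it. Then $M^\vee$ is a Levi subgroup containing $u,s,h$, and $(s,h)$ is essentially elliptic in $\Gamma^{M^\vee}_u$. The pair $(M^\vee,(s,h))$ is well defined up to $N_{G^\vee}(M^\vee)$, which yields
\[
\bigoplus_u\bC[\cY(\Gamma_u)/_\sim]^{\Gamma_u}\cong\bigoplus_{M^\vee}\Big(\bigoplus_{u\in M^\vee_\un}\bC[\cY(\Gamma^{M^\vee}_u)_{\ess}]^{\Gamma^{M^\vee}_u}\Big)_{W(M^\vee)}.
\]

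Next, I would apply Theorem \ref{t:ess}, the essentially elliptic version of $\overline{\mathsf{LLC}^p}_\un$, to each Levi $M$; it gives an isometry between the $M$-summands. Since unramified LLC is compatible with parabolic induction (the $L$-packet of $i_M^G$ is computed from the $M^\vee$-parameter with component groups mapping $A_{M^\vee}\to A_{G^\vee}$), I expect the endoscopic character $\Pi^G(u,s,h)$ to equal $i_M^G\Pi^M(u,s,h)$ modulo the rigid relations, and to be $W(M^\vee)$-equivariant. Summing over $M$ then produces the isomorphism $\overline{R}(G)^p_{\un,c}\cong\bigoplus_u\bC[\cY(\Gamma_u)/_\sim]^{\Gamma_u}$, sending $(s,h)\mapsto\Pi(u,s,h)$.

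For the middle term $\Rcpt(G)$, restriction of characters to compact elements factors through the rigid quotient, because induced characters from $M'$ vanish off conjugates of $M'$-compact elements, and unramified twists are trivial on compact elements. Surjectivity onto $\Rcpt(G)$ holds by definition. Injectivity needs that a rigid-quotient class with vanishing compact character is zero. I would obtain this from the trace Paley--Wiener theorem of \cite{CH2}, which identifies the rigid quotient's dual with the span of compactly supported characteristic-type functions (e.g. inflations from $\overline K$ for parahorics). Alternatively, I would check that the Gram matrix of the $\Pi(u,s,h)_c$ under the compact pairing is nondegenerate, using the Levi-by-Levi isometries.

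The main obstacle I expect is the compatibility step: showing that $\Pi^G(u,s,h)\equiv i_M^G\Pi^M(u,s,h)$ in the rigid quotient, including the correct identification of the component groups and of the pure inner twists of $M$ inside those of $G$, and that the $W(M^\vee)$-coinvariants on both sides match. I would first treat pairs with $s$ ranging over a fixed torus and use induction in stages, reducing to the known compatibility of Lusztig's parametrization with parabolic induction (via affine Hecke algebras and Kazhdan--Lusztig induction).
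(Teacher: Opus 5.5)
Your plan is essentially the paper's proof of Proposition \ref{p:cpt-space}, step for step: the Levi decomposition of the rigid quotient (which the paper takes from Dat's Hopf-system theorem, Theorem \ref{t:dat}, with $\ind_c$ as the section), the map $\kappa$ of Proposition \ref{prop-map-on-sets} and Corollary \ref{cor-cpt-ell} on the dual side, Theorem \ref{t:ess} applied to each Levi, the identity $\Pi(u,s,h)=i_M^G\Pi^M(u,s,h)$ (Lemma \ref{l:ind-Pi}, which the paper obtains exactly, not just modulo rigid relations, from Solleveld's formula for inducing $L$-packets), and the compact trace Paley--Wiener duality to pass to the restrictions $\Pi(u,s,h)_c$. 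Two small corrections: your dual-side summands must be taken modulo $\rZ_{M^\vee}^\circ\times\rZ_{M^\vee}^\circ$, since this is exactly what $\sim$ becomes on essentially elliptic pairs (Lemma \ref{l:equiv-ess}); and outside the adjoint case Theorem \ref{t:ess} is not necessarily an isometry, so use the Paley--Wiener argument rather than your Gram-matrix alternative.
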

We mention that for this proof, we need to first extend (\ref{e:ACR-ell}) to cover the more general case of reductive groups, in which case the space of elliptic pairs is enlarged to a space of {\it essentially elliptic} pairs. See Section \ref{s:ess-ell}.

\begin{rem}
    The isomorphisms in (\ref{e:ACR-ell}) and Theorem \ref{t:cpt-intro} indicate that there exists a deeper, geometric connection between the two sides. Indeed, in \cite[Theorem E and Remark 1.3.6]{BKK}, Bezrukavnikov, Karpov, and Krylov have shown, via a categorification of Lusztig's asymptotic Hecke algebra, that the cocenter of the affine Hecke algebra with equal parameters is naturally isomorphic to the sum $\bigoplus_u \mathcal O^a(\Gamma_u)$, where $\mathcal O^a(\mathcal C_{\Gamma_u})$ is the ring of regular functions on the commuting variety $\mathcal C_{\Gamma_u}=\text{Comm}_{\Gamma_u}//\Gamma_u$ that are left-locally constant and ``of Springer type'' (see the precise conditions (a), (b) in {\it loc. cit.}). 

    More generally, for the full unipotent part of the cocenter of a split reductive group, a similar isomorphism with the rings of left-locally constant functions on $\mathcal C_{\Gamma_u}$ is constructed by a different geometric realization in upcoming work by Bezrukavnikov, Kazhdan, Varshavsky, and the second author.

    We mention that related categorifications of the cocenter and the relation with functions on commuting varieties in the Langlands dual group have been obtained by Ben-Zvi--Nadler--Preygel \cite{BNP} and Li--Nadler--Yun \cite{LNY}.
\end{rem}

\subsection{}
Since $\Rcpt(G)$ has an obvious involution given by the flip $(s,h)\to (h,s)$, this defines, as in the elliptic case, an involution, the dual nonabelian transform 
\begin{equation}
\FT^\vee_\cpt\colon \Rcpt(G) \to \Rcpt(G).
\end{equation}
We remark that $\FT^\vee_\cpt$ mixes the representations of the pure inner twists of $G$. As in the elliptic case of \cite{ACR}, we expect that there is a commutative diagram (up to certain roots of unity):
 \begin{displaymath}
 \xymatrix@+1pc{
    {\Rcpt(G)} \ar[r]^{\mathrm{FT}_\cpt^\vee} \ar[d]_{\res_{\cpt,\un}}
    & {\Rcpt(G)}\ar[d]^{\res_{\cpt,\un}}\\
    {  \cC(G)_{\cpt,\un}} \ar[r]_{\mathrm{FT_{\cpt,\un}}}
    & {  \cC(G)_{\cpt,\un}} }
\end{displaymath} 
where the vertical arrows are defined by taking invariants by the pro-unipotent radicals of maximal compact subgroups. More specifically, we expect the following:

\begin{conj}[Conjecture \ref{c:compact}]
For each unipotent element $u \in G^\vee$, compact pair $(s, h) \in \cY(\Gamma_u)/_\sim$, and maximal compact open subgroup $K$ of $G$, there exists a root of unity $\zeta$ such that
\begin{equation*}
\res_K(\Pi(u, h, s)_c) = \zeta (\FT_K \circ \res_K)(\Pi(u, s, h)_c).
\end{equation*}
\end{conj}
Here $\res_K$ is parahoric restriction with respect to the compact subgroup $K$, which has image in $\oplus_{H \in \InnT(\overline K)} R_\un(H)$, and $\FT_K$ is the nonabelian Fourier transform on this space, as extended to disconnected groups in \cite{ACR}.
We give evidence for this conjecture, including the following:
\begin{thm}[Theorems \ref{t:SLn} and \ref{t:PGL}]\label{t:typeA-intro} Conjecture \ref{c:compact} holds when $\bG = \SL_n$ and when $\bG = \PGL_m$ for $m$ prime.
\end{thm}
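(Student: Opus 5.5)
We verify Conjecture \ref{c:compact} by reducing it, through parabolic induction, to the elliptic (or essentially elliptic) statements for Levi subgroups. For type $A$ these reduce to explicit computations with principal-series unipotent representations of $\GL$-type finite groups.

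\emph{Step 1: reduce to Levi subgroups.} By Theorem \ref{t:cpt-intro}, $\Rcpt(G)$ has the basis $\Pi(u,s,h)_c$ indexed by compact pairs. Each such pair comes from an essentially elliptic pair for some Levi $M^\vee$ with $u\in M^\vee$. This means $\Pi(u,s,h)_c$ is the image under compact parabolic induction $i_{M}^{G}$ of an essentially elliptic character $\Pi^M(u,s,h)$ of $M$. The flip $(s,h)\mapsto(h,s)$ preserves $M^\vee$, so $\FT^\vee_\cpt$ commutes with this induction.

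Parahoric restriction satisfies a Mackey-type formula. For a maximal compact $K$ with reductive quotient $\overline K$, $\res_K\circ i_M^G$ equals a sum over relevant double cosets of Harish-Chandra inductions $R_{\overline L}^{\overline K}\circ\res_{K_M}$, where $\overline L$ runs over Levi subgroups of $\overline K$ coming from parahorics of $M$. Lusztig's Fourier transform commutes with Harish-Chandra induction on unipotent characters up to the roots of unity attached to families. The extension to disconnected $\overline K$ in \cite{ACR} is built to preserve this. It therefore suffices to prove the essentially elliptic statement $\res_{K_M}(\Pi^M(u,h,s))=\zeta\,\FT\res_{K_M}(\Pi^M(u,s,h))$ for each Levi $M$ and each maximal compact $K_M$. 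We also need the roots of unity for different $K_M$ to be compatible, i.e.\ to depend only on $(u,s,h)$.

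\emph{Step 2: type A Levis.} For $\bG=\SL_n$ the dual is $\PGL_n$, and every Levi $M$ has derived group a product of $\SL$'s with $\GL$-type centre. In $\PGL_n$ the group $\Gamma_u$ is, up to a finite central quotient, $\prod \GL_{m_i}$ modulo scalars. So essentially elliptic pairs exist only when $u$ is a sum of Jordan blocks of equal size $d$ with $d\,|\,n$. The pair $(s,h)$ then generates a subgroup of $\mu_{n/d}\times\mu_{n/d}$ via the commutator pairing. The associated $\Pi(u,s,h)$ are characters of depth-zero supercuspidal or Steinberg-type components induced from the groups $\SL_{d}$-blocks of the inner forms. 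Their parahoric restrictions are explicit: to $\overline K$ of type $\prod \SL_{k}$ or its disconnected normalizer, one gets Steinberg or trivial characters twisted by characters of the component group $\bZ/e$. On these, $\FT_K$ acts by the Fourier matrix of the abelian group $\bZ/e\times\widehat{\bZ/e}$, i.e.\ the matrix $e^{-1}\zeta^{ij}$. The flip $(s,h)\mapsto(h,s)$ induces the same matrix. Comparing the two gives the result, with $\zeta$ a sign or a power of a root of unity from the Lusztig normalization. The case $\PGL_m$ with $m$ prime follows the same pattern. Here the only non-split inner forms are division algebras of index $m$, only $u$ regular or $u=1$ give elliptic pairs, and the compact open subgroups are Iwahori-type, so the finite groups involved are tori and $\GL_m(\Fq)$. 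The check reduces to $\mu_m\times\mu_m$ and the Fourier matrix of $\bZ/m$.

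\emph{Main obstacle.} The delicate step is Step 1's compatibility. We must show that the parahoric restriction of an induced compact character expands into Harish-Chandra inductions so that the roots of unity $\zeta$ match across all summands and all inner twists. This includes the mixing of pure inner twists of $\overline K$ under $\FT_K$. We would first handle it by direct computation in type $A$, where the families are singletons and the twist-mixing is governed by the central character $\bZ/n$. This avoids a general Mackey argument and explains the restriction to $\SL_n$ and $\PGL_m$ with $m$ prime.
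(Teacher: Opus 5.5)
Your Step 1 is a conditional reduction, and the point you defer as the ``main obstacle'' is where all the content of the theorem lies. Passing from $\res_K\circ i_M^G$ to Levi subgroups through the Mackey formula (Proposition \ref{p:par-ind}) needs three things besides commutation of $\FT$ with Harish-Chandra induction (Hypothesis \ref{h:FT-comm}, which is harmless in type $A$):
\begin{itemize}
\item The weak invariance $(\pi^M(u,s,\rho))^g=\pi^{M^g}(u^{g'},s^{g'},\rho^{g'})$ of Hypothesis \ref{h:FT-twist}. You never mention it, and it is only known in special cases ($\GL_n$, or $g\in N_G(M)$).
\item A treatment of non-maximal parahorics. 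The Mackey terms involve the parahorics $K\cap M^g$ of $M^g$, which are generally not maximal; this is what Lemma \ref{l:non-max-K} is for.
\item An argument for the non-parahoric maximal compact subgroup of $\PGL_m(F)$, the normaliser of the Iwahori, where $\overline K$ is disconnected. There you merely assert that the disconnected $\FT_K$ is compatible with induction.
\end{itemize}
Moreover, the compatibility condition you formulate, roots of unity depending only on $(u,s,h)$, cannot hold. Already for $\SL_n$ itself the root of unity is $\zeta_d^{2m\lfloor di/n\rfloor}$, which depends on the maximal parahoric $K_i$; for example $n=3$, $u=1$, $m=1$ gives $1,\zeta_3^2,\zeta_3$. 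What the Mackey argument actually requires is agreement of the roots of unity across the summands $g\in{}^K\!W^P$ for a fixed $K$ (cf.\ the remark after Theorem \ref{t:ell-to-com-K}), and you give no argument for that. This is why the paper proves Theorem \ref{t:typeA-intro} by direct computation rather than through Corollary \ref{c:levis-imply-compact}.

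The direct computation you sketch in Step 2 is wrong where it matters.

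For $\SL_n$:
\begin{itemize}
\item $H^1(F,\SL_n)=1$, so there are no nontrivial pure inner twists. Every maximal compact subgroup is a maximal parahoric $K_i$ with $\overline{K_i}\cong\SL_n(\Fq)$ connected, and $\FT_{K_i}$ is the identity because type $A$ families are singletons. So no Fourier matrix of $\bZ/e$ enters.
\item The packets are not built from supercuspidals of inner forms. They are the constituents of $i_{\prod_i\GL_{a_i}^{r_i}}^{\GL_n}(\St\otimes|\det|^{h_0})|_{\SL_n(F)}$, permuted simply transitively by $C_d$ with $d=\gcd(\lam^\perp)$.
\item What must be shown is that, for the compact pairs $(h_0,w^m)$ with $0\le m<d$, the restrictions $\res_{K_i}\Pi(u,h_0,w^{-m})$ and $\res_{K_i}\Pi(u,h_0,w^{m})$ differ by a root of unity. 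The flip sends one to the other up to $\Gamma_u$-conjugacy.
\item The paper obtains this from the $q\to1$ Iwahori--Hecke Mackey computation. It shows that $\res_{K_i}\pi(u,h_0,\phi_k)$ corresponds to $\Ind_{W_u}^{S_n}(\sgn\rtimes\phi_{k+\lfloor di/n\rfloor})$, a $K_i$-dependent shift of labels. This is combined with $\Ind_{W_u}^{S_n}(\sgn\rtimes\phi_k)\cong\Ind_{W_u}^{S_n}(\sgn\rtimes\phi_{-k})$, which holds because characters of $S_n$ are real.
\end{itemize}

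For $\PGL_m$ with $m$ prime, your claim that only $u$ regular or $u=1$ matter is incorrect. Every unipotent class carries the compact pair $(1,1)$, and $u=1$ has no elliptic pairs in $\SL_m(\bC)$. The missing observation is that $\gcd(\lam)\in\{1,m\}$, so there are two cases:
\begin{itemize}
\item $u$ is regular, which is handled by Example \ref{ex:reg-cpt}.
\item $\Gamma_u$ is connected. Then $(1,1)$ is the only compact pair, $\FT^\vee_\cpt$ acts trivially, and one must show that $\res_K(i_M^G\St_M)$ is $\FT_K$-fixed. This is automatic for hyperspecial $K$. For the Iwahori normaliser it follows from Proposition \ref{p:par-ind} (every $\overline{K\cap P^g}$ lies in $\overline K^\circ$) together with Lemma \ref{l:FT-PGL}.
\end{itemize}
Neither of these ingredients appears in your proposal.
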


Parabolic induction gives a map from the essentially elliptic spaces for Levi subgroups of $G$ to the space $\Rcpt(G)$. Using this map, we show that, under some natural hypotheses (Hypotheses \ref{h:FT-comm} and \ref{h:FT-twist}) about Lusztig's Fourier transform and the local Langlands correspondence, we prove the following:

\begin{thm}[Corollary \ref{c:levis-imply-compact}]
Assume $G$ has simply connected derived subgroup. Then Conjecture \ref{c:compact} holds for $G$ if the corresponding conjecture on elliptic spaces holds for all Levi subgroups $M$ of $G$.
\end{thm}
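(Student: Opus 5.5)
We reduce Conjecture \ref{c:compact} for $G$ to the elliptic conjectures for its Levi subgroups. The reduction uses the description of $\Rcpt(G)$ as a sum of images of essentially elliptic spaces of Levi subgroups under parabolic induction. We then check that both sides of the desired identity are compatible with parabolic induction followed by parahoric restriction.

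\textbf{Step 1: reduce to induced classes.} By Theorem \ref{t:cpt-intro} (Proposition \ref{p:cpt-space}), every compact pair $(s,h)\in\cY(\Gamma_u)/_\sim$ is conjugate to a pair that is essentially elliptic for $\Gamma_u\cap M^\vee$, where $M^\vee$ is a Levi subgroup of $G^\vee$ containing $u$. A natural choice is $M^\vee$ equal to the centralizer of a maximal torus in $Z_{\Gamma_u}(s,h)^\circ$. Correspondingly,
\[
\Pi(u,s,h)_c=\big(i_M^G\,\Pi^M(u,s,h)\big)_c
\]
for the dual Levi $M$ of $G$ (up to the usual sign normalisation). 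Since the flip $(s,h)\mapsto(h,s)$ preserves $M^\vee$, we also get $\Pi(u,h,s)_c=(i_M^G\Pi^M(u,h,s))_c$. Here the hypothesis that $G$ has simply connected derived subgroup is used: it makes $Z_{G^\vee}$ connected and ensures every Levi $M$ of $G$ has the same property. As a result, the relevant component groups and pure inner twists of $M$ embed compatibly into those of $G$, and no extra twisting characters appear.

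\textbf{Step 2: compose parahoric restriction with induction.} For $K\in\max(G)$ we use a Mackey-type formula, in the form of \cite{MW} and the compatibility of parahoric restriction with Harish-Chandra induction:
\[
\res_K\circ i_M^G=\sum_{w} R^{\overline K}_{\overline{L}_w}\circ \res_{K_M^w},
\]
where $w$ runs over double cosets, the $K_M^w$ are the maximal parahorics of $M$ attached to facets of $\mathcal B(M)$ lying in the closure of the facet of $K$, and $\overline L_w$ is the corresponding Levi of $\overline K$. This identity should hold summed over pure inner twists, matching twists of $\overline K$ with those of $\overline L_w$.

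\textbf{Step 3: combine.} Applying the elliptic conjecture for $M$ gives
\[
\res_{K_M^w}\Pi^M(u,h,s)=\zeta_w\,\FT_{K_M^w}\res_{K_M^w}\Pi^M(u,s,h).
\]
We then need the finite-group fact that Lusztig's Fourier transform commutes with Harish-Chandra induction:
\[
\FT_K\circ R^{\overline K}_{\overline L}=R^{\overline K}_{\overline L}\circ\FT_{\overline L}.
\]
This is Hypothesis \ref{h:FT-comm}, extended to disconnected groups and inner twists. Summing over $w$ then yields the conjecture for $G$, provided all roots of unity $\zeta_w$ agree.

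\textbf{Main obstacle.} The hard part is exactly this uniformity of the roots of unity together with the compatibility between twisting by pure inner forms and the nonabelian Fourier transform on the disconnected groups $\overline K$. We expect this to be handled by Hypothesis \ref{h:FT-twist}: it should show that $\zeta_w$ depends only on $(u,s,h)$ via $M$, and not on the facet $w$. We would verify it first by checking that $\zeta$ is determined by the eigenvalue of Frobenius attached to the family of $\Pi^M(u,s,h)$.
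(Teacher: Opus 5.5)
Your skeleton is the paper's proof of Theorem \ref{t:ell-to-com-K}: write $\Pi(u,s,h)=i_P^G\Pi^M(u,s,h)$ by Lemma \ref{l:ind-Pi}, expand $\res_K\circ i_P^G$ by the Mackey formula of Proposition \ref{p:par-ind}, apply the Levi statement term by term, and move $\FT$ past Harish-Chandra induction using Hypothesis \ref{h:FT-comm}. However, Step 2 rests on a false claim that hides a missing ingredient. The groups $K\cap M^g$ in the Mackey sum are parahoric subgroups of $M^g$ (Lemma \ref{l:levi-compact}), but in general they are \emph{not} maximal. For example, in $G_2$ let $K$ be the maximal parahoric of type $A_2$ and $M\cong\GL_2$ the Levi of a short root. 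The roots that are integral at the vertex of $K$ are exactly the long roots, and $w$ preserves root lengths, so every $K\cap M^g$ is an Iwahori subgroup of $M^g$. The hypothesis of the corollary gives Diagram \ref{e:diagram-K} only for maximal compact subgroups of $M$, so you cannot apply it directly at your $K_M^w$.

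The paper fills this gap with Lemma \ref{l:non-max-K}. Suppose $K_0\subset K'$ with $K'$ a maximal parahoric of $M$. Then $\overline{K}_0$ is a Levi subgroup of $\overline{K'}$ and $\res_{K_0}=r^{\overline{K}_0}_{\overline{K'}}\circ\res_{K'}$. The restriction form of Hypothesis \ref{h:FT-comm} (Lemma \ref{l:ind-res-hyp}, obtained from Frobenius reciprocity and the hermitian property of $\FT$) then transfers the commutation from $K'$ to $K_0$. Without this step, your Step 3 is unjustified whenever $K$ is not hyperspecial.

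You have also misplaced the auxiliary hypotheses. The essential use of ``simply connected derived subgroup'' is that every maximal compact subgroup of $G$ (and of each Levi $M$) is a maximal parahoric. Hence $\overline K$ is connected, has no inner twists, and $\FT_K$ is Lusztig's transform. This is what lets Proposition \ref{p:par-ind}, Hypothesis \ref{h:FT-comm} and Lemma \ref{l:non-max-K} be used as stated. No extension of Hypothesis \ref{h:FT-comm} to disconnected groups or inner twists is needed, and none is assumed.

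Hypothesis \ref{h:FT-twist} says nothing about roots of unity. The paper uses it only to rewrite $(\Pi^M(u,s,h))^g$ as $\Pi^{M^g}(u^{g'},s^{g'},h^{g'})$, and back for $(h,s)$, so that the Levi statement for $M^g$ can be applied at $K\cap M^g$. The uniformity of the roots of unity that you flag is not proved in the paper. Theorem \ref{t:ell-to-com-K} assumes the Levi diagrams commute exactly. The remark after it notes that the roots-of-unity version goes through only if $\zeta$ is the same for $M$ and for all its conjugates $M^g$. So your ``main obstacle'' has to be absorbed into the hypothesis; neither Hypothesis \ref{h:FT-twist} nor a Frobenius-eigenvalue argument will resolve it.
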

See Section \ref{s:FT-ind} for more details and a more precise statement.
Using this result, we prove the following corollary (again, assuming Hypotheses \ref{h:FT-comm} and \ref{h:FT-twist}):
\begin{cor}[Corollary \ref{c:GL-G2}]
Conjecture \ref{c:compact} holds when $\bG$ is $\GL_n$ or $G_2$.
\end{cor}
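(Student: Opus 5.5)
The corollary follows by reduction to the preceding Corollary \ref{c:levis-imply-compact}. Both $\GL_n$ and $G_2$ have simply connected derived subgroup ($\SL_n$, and $G_2$ itself), so that hypothesis is satisfied. Hypotheses \ref{h:FT-comm} and \ref{h:FT-twist} are assumed throughout. It therefore suffices to check that the elliptic (more precisely, essentially elliptic) version of the conjecture holds for every Levi subgroup $M$ of $\bG$. That version asserts compatibility of $\FT^\vee_\ellip$ with $\FT_K\circ\res_K$ on $\cR^p_{\ess}(M)$.

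First I list the Levi subgroups up to conjugacy.

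\emph{Case $\GL_n$.} Every Levi is $M\cong \GL_{n_1}\times\cdots\times\GL_{n_k}$. The essentially elliptic unipotent space of a product is the tensor product of the factors' spaces. Both the dual transform on pairs and $\FT_K$ (for $K$ a product of maximal compacts) factor as tensor products, so I reduce to a single factor $\GL_m$. For $\GL_m$, all pure inner twists are trivial, and the essentially elliptic unipotent representations are twists of the Steinberg by unramified characters. On the dual side, the only essentially elliptic pairs are $u$ regular with $(s,h)$ central in $\Gamma_u=Z(G^\vee)$. The flip $(s,h)\mapsto(h,s)$ then exchanges a central-character twist with an unramified-character twist. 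On the finite side, $\res_K$ of $\St\otimes\chi$ for $K=\GL_m(\mathcal O)$ is the Steinberg of $\GL_m(\bF_q)$ twisted by a linear character. Lusztig's Fourier transform on type $A$ unipotent characters is the identity (families are singletons), up to the twist bookkeeping governed by Hypothesis \ref{h:FT-twist}. So the identity can be verified directly, with $\zeta=1$ or a sign.

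\emph{Case $G_2$.} The Levi subgroups are the torus $T$, the two $\GL_2$'s (long and short root), and $G_2$ itself. For $T$ the statement is trivial: the elliptic space consists of unramified characters, $K=T(\mathcal O)$, and the Fourier transform is the identity on trivial-type representations, matching the flip of central pairs. For $\GL_2$ it is the $\GL_m$ computation above. For $G_2$ itself (adjoint and simply connected), I invoke the elliptic result for $G_2$ established in \cite{ACR}, where the elliptic conjecture was verified for $G_2$ by explicit computation with the elliptic pairs in $\Gamma_u$ (notably the subregular unipotent class with $A(u)=S_3$) and the maximal parahorics $G_2,\ \SL_3,\ \SO_4$.

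The main obstacle is ensuring that the essentially elliptic statement for the Levis (especially the $\GL_m$ factors with their positive-dimensional centres) is exactly the form required as input by Corollary \ref{c:levis-imply-compact}. That form must include the compatibility of roots of unity and of unramified twists across the tensor factorisation. I would check this carefully against Hypothesis \ref{h:FT-twist}, which is where the twist by central characters of $K$ versus unramified characters of $M$ must match.
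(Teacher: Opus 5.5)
Your proposal is correct and follows the paper's proof: you apply Corollary \ref{c:levis-imply-compact}, feeding in the known elliptic result for $G_2$ (the paper cites \cite{Ciu} for this, not \cite{ACR}) together with the torus and $\GL_m$ cases, where only regular $u$ contributes, the essentially elliptic space is spanned by $\St$, and $\FT_K$ is the identity. The differences are cosmetic: the paper handles a Levi $\prod_i\GL_{n_i}$ directly, noting that essentially elliptic pairs force $u$ to be regular in $M^\vee$, instead of tensor-factorizing; your root-of-unity worry is moot because $\zeta=1$ for every proper Levi here; and in the $\GL_m$ check unramified characters are simply trivial on $K$, so Hypothesis \ref{h:FT-twist} plays no role in that step.
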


\subsection{Structure of the paper}
Section \ref{s:prelim} contains background and preliminary results about Levi subgroups and parabolic induction for $p$-adic groups. In Section \ref{s:ellipticpairs}, we define compact pairs in complex reductive groups. We also extend the notion of elliptic pairs from \cite{ACR} to define essential elliptic pairs. We give an explicit relation between compact pairs for a group $G^\vee$ and essentially elliptic pairs in Levi subgroups $M^\vee$ of $G^\vee$. 

In Section \ref{s:ess-ell} we define the essentially elliptic space $\Ress(G)$ and extend the results of \cite{ACR} to this setting, including a proof of (a generalization of) \cite[Conjecture 9.1]{ACR}. In Section \ref{s:rigid} we define the compact space $\Rcpt(G)$, prove Theorem \ref{t:cpt-intro}, and formulate Conjecture \ref{c:compact}. In Section \ref{s:FT-ind} we prove the relation between the compact and essentially elliptic conjectures. Finally, in Section \ref{s:ex-A} we consider the case when $\bG$ is $\SL_n$ or $\PGL_n$, proving Theorem \ref{t:typeA-intro}.

\subsection{Notation and conventions}\label{section-notation}

Given a (possibly disconnected) complex reductive group $\cG$, we write $\rZ_{\cG}$ for the center of $\cG$. Given $x \in \cG$, we write $\rZ_{\cG}(x)$ for the centralizer of $x$ in $\cG$. Similarly, if $H$ is a subgroup of $\cG$, we write $\rZ_{\cG}(H)$ for the centralizer of $H$ in $\cG$, and if $\varphi$ is a homomorphism with image in $\cG$, we write $\rZ_{\cG}(\varphi)$ for $\rZ_{\cG}(\text{im } \varphi)$. We write $\cG^\circ$ for the identity component of $\cG$. If $x \in \cG$, we write $A_{\cG}(x) = \rZ_{\cG}(x)/\rZ_{\cG}(x)^\circ$ for the component group of $\rZ_{\cG}(x)$. We write $\cG_\un$ for the set of unipotent elements of $\cG$. If $u \in \cG_\un$, we write $\Gamma^{\cG}_u$ for the reductive part of $\rZ_{\cG}(u)$ (if there is no risk of confusion, we simply write $\Gamma_u$).
We say that a subgroup $L \subset \cG$ is a Levi subgroup if $L$ is the centralizer of a torus in $\cG$. 

If $A$ is a group, $H$ is a subgroup of $A$, $\rho$ is a representation of $H$, and $g \in A$, we write $\rho^g$ for the twisted representation of $H^g:= gHg^{-1}$. If a group $A$ acts on a set $S$, we write $S/A$ for the set of $A$-orbits on $S$. We sometimes write $A\setminus S$ if $A$ is acting on the left.
Given a finite group $A$, we write $\widehat{A}$ for the set of irreducible characters of $A$.
Given a finite set $S$, we write $\mathbb{C}[S]$ for the $\mathbb{C}$-vector space of functions $S \to \mathbb{C}$.

Given a reductive group $\bH$ defined over a finite or nonarchimedean local field $k$, we use a Roman letter $H$ for $\bH(k)$. We write $\Irr_{\un}(H)$ for the set of (isomorphism classes of) irreducible unipotent representations of $H$ and $R_{\un}(H)$ for the $\bC$-span of $\Irr_{\un}(H)$. Given a parabolic subgroup $P = MN$ of $H$, we write $i_P^H: R_{\un}(M) \to R_{\un}(H)$ for the linear map given by parabolic induction on irreducible representations. We also denote this map by $i_M^H$. We write $r_H^M: R_{\un}(H) \to R_{\un}(M)$ for Jacquet restriction.

Throughout the paper, $F$ is a nonarchimedean local field, $\bG$ is a split connected reductive group over $F$, and $G = \bG(F)$.

\medskip

\noindent{\bf Acknowledgments.} We are grateful to Jean-Loup Waldspurger and Tom Haines for valuable emails. D.C. thanks Roman Bezrukavnikov for many helpful discussions about the geometrization of the cocenter and characters of $p$-adic groups.

\section{Preliminaries}\label{s:prelim}

We now review some background and prove some preliminary results. We broadly follow the conventions of our previous paper. See \cite[Section 4]{ACR} for more details. Let $F$ be a nonarchimedean local field, let $F_s$ be a fixed separable closure of $F$, and let $F_{\un} \subset F_s$ be the maximal unramified extension. Let $\mathfrak{o}_F$ denote the ring of integers of $F$. Let $W_F$ be the Weil group of $F$, and let $\Frob \in W_F$ be the geometric Frobenius element of $\Gal(F_{\un}/F)$. 

Let $\bG$ be a split connected reductive group over $F$. Let $G = \bG(F)$. As in \cite[Section 2]{ACR}, we write $\InnT^p(G)$ for the set of pure inner twists of $G$. There is natural bijection \[\InnT^p(G)\longleftrightarrow H^1(F, \bG),\]
where $H^1(F,\bG)$ denotes the first Galois cohomology group of $\bG$.

\subsection{The Unipotent Local Langlands Correspondence}\label{s:llc}
We write $\Irr_{\un}(G)$ for the set of isomorphism classes of irreducible unipotent representations of $G$, and write $R_{\un}(G)$ for the $\bC$-span of the elements of $\Irr_{\un}(G)$.

Let $G^\vee$ be (the $\bC$-points of) the dual group of $\bG$. 
Since $\bG$ is split, we may think of Langlands parameters as continuous morphisms $\varphi: W_F \times \SL_2(\bC) \to G^\vee$ such that $\varphi(w)$ is semisimple for all $w \in W_F$ and the restriction of $\varphi$ to $\SL_2(\bC)$ is algebraic. Two Langlands parameters are considered to be equivalent if they are conjugate by an element of $G^\vee$. 
A Langlands parameter $\varphi$ is called unramified if the restriction of $\varphi$ to the inertia subgroup of $W_F$ is trivial. 
Write $\Phi_{\un}(G)$ for the set of equivalence classes of unramified Langlands parameters for $G$.
Recall that an unramified Langlands parameter $\varphi$ is completely determined up to conjugacy by the pair $(u_\varphi, s_\varphi)$, where 
\begin{equation*}
u_\varphi := \varphi(1, \begin{psmallmatrix} 1 & 1\\0 & 1\end{psmallmatrix}) \text{ and } s_\varphi := \varphi(\Frob,1)
\end{equation*}
Conversely, given $x \in G^\vee$ with Jordan decomposition $x = us$, there exists an unramified Langlands parameter $\varphi$, which is unique up to $G^\vee$-conjugacy, such that $u = u_\varphi$ and $s = s_\varphi$. Write $\varphi_{(u, s)}$ for the element of $\Phi_{\un}(G)$ corresponding to $(u, s)$ in this way. 

Given an unramified Langlands parameter $\varphi$, let $A_\varphi = \rZ_{G^\vee}(\text{im } \varphi)/\rZ_{G^\vee}(\text{im } \varphi)^\circ$. Let 
\begin{equation*}
\Phi^p_{e, \un}(G^\vee) = G^\vee \setminus \{ (\varphi, \rho) \mid \varphi \text{ is an unramified Langlands parameter, } \rho \in \widehat{A}_\varphi\}.
\end{equation*}
 The unramified local Langlands correspondence (first proven for simple adjoint $\bG$ in \cite{LuI} and proven in the most general case in \cite{So1}) determines a bijection
 \begin{equation*}
\bigsqcup_{G' \in \InnT^p(G)} \Irr_{\un}(G') \longrightarrow \Phi^p_{e, \un}(G^\vee).
 \end{equation*}
 Write $\pi(u, s, \rho)$ for the preimage of $(\varphi_{(u, s)}, \rho)$ under this bijection. For clarity, if $\pi(u, s, \rho)$ is representation of $G' \in \InnT^p(G)$, we sometimes write $\pi(u, s, \rho)$ as $\pi^{G'}(u, s, \rho)$.

Given an enhanced Langlands parameter $(\varphi_{(u, s)}, \rho)$, let $G_\rho \in \InnT^p(G)$ be the pure inner twist such that $\pi(u, s, \rho)$ is a representation of $G_\rho$ (this is the pure inner twist $G'$ such that the representation of $\pi_0(Z_{G^\vee})$ induced by $\rho$ corresponds to $G'$ under the identification $\InnT^p(G) \to H^1(k, \bG) \to \Irr(\pi_0(Z_{G^\vee}))$. See \cite[Section 2.1]{ACR}, for example. 

The following lemma will be useful later. In it, we use $\Gamma_u$ for $\rZ_{G^\vee}(u)^{\red}$ (cf. Section \ref{section-notation}).

\begin{lem}\label{l:center-twist}
Let $(\varphi_{(u, s)}, \rho) \in \Phi_{e, \un}^p(G^\vee)$. 
Suppose $s' \in \Gamma_u$ is a semisimple element such that $\gamma s \gamma^{-1} = s'z$ for some $\gamma \in \Gamma_u$ and some $z \in \rZ_{G^\vee}^\circ$. Then there exists an unramified character $\chi$ of $G_\rho$ such that $\pi(u, s, \rho) = \pi(u, s', \rho^\gamma) \otimes \chi$. In particular, for any compact subgroup $K$ of $G_\rho$, we have $\pi(u, s, \rho) \vert_K = \pi(u, s', \rho^\gamma) \vert_K$.
\end{lem}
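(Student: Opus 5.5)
First I reduce to the case $\gamma = 1$. Conjugating the enhanced parameter by $\gamma$ gives the same class in $\Phi^p_{e,\un}(G^\vee)$. Since $\gamma \in \Gamma_u \subset \rZ_{G^\vee}(u)$, conjugation by $\gamma$ fixes $u$, and it sends $(\varphi_{(u,s)},\rho)$ to $(\varphi_{(u,\gamma s\gamma^{-1})},\rho^\gamma)$. Hence $\pi(u,s,\rho)=\pi(u,\gamma s\gamma^{-1},\rho^\gamma)$. Writing $s''=\gamma s\gamma^{-1}=s'z$, it suffices to prove
\[
\pi(u,s'z,\rho')=\pi(u,s',\rho')\otimes\chi \qquad\text{with } \rho'=\rho^\gamma .
\]
For this to make sense we need $A_{\varphi_{(u,s'z)}}=A_{\varphi_{(u,s')}}$. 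This holds because $z$ is central, so $\rZ_{G^\vee}(u,s'z)=\rZ_{G^\vee}(u,s')$. Also $s'$ is semisimple and commutes with $u$ (since $s'\in\Gamma_u$), so $us'$ is a Jordan decomposition and $\varphi_{(u,s')}$ is defined. Finally, the pure inner twist is unchanged, since it is read off from the restriction of $\rho'$ to $\pi_0(\rZ_{G^\vee})$, and this restriction is the same as that of $\rho$ because $\gamma$ acts trivially on the center.

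Next I realise $z$ as an unramified character. The element $z\in\rZ_{G^\vee}^\circ$ defines an unramified cocycle $W_F\to \rZ_{G^\vee}$ with $\Frob\mapsto z$. Under the Langlands correspondence for characters, $H^1_{\un}(W_F,\rZ_{G^\vee})\cong \Hom(G'/G'^1,\bC^\times)$, this yields an unramified character $\chi_z$ of every pure inner twist $G'$. Since $\bG$ is split, the twists share the same maximal central torus quotient data, so $\chi_z$ makes sense on each $G'$. Twisting the parameter $\varphi_{(u,s')}$ by this central cocycle gives exactly $\varphi_{(u,s'z)}$.

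The key input, and the main obstacle, is the compatibility of the unipotent LLC of \cite{So1} with twisting by unramified characters coming from $\rZ_{G^\vee}^\circ$: namely $\pi(\varphi z,\rho)=\pi(\varphi,\rho)\otimes\chi_z$. I would cite this directly from Solleveld's construction, which satisfies the desideratum of equivariance under $H^1(W_F,\rZ_{G^\vee})$, at least for the unramified part. If that citation is not available in this form, I would argue through the Hecke algebra picture instead. Unipotent Bernstein blocks of $G'$ correspond to modules of affine Hecke algebras $\cH$. Tensoring with an unramified character $\chi$ of $G'$ corresponds to the automorphism of $\cH$ given by multiplying the lattice part by $\chi$. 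On the Kazhdan--Lusztig parametrisation this automorphism translates the semisimple parameter $s$ by the corresponding central element $z$. The main care is needed to ensure that the labelling of $\rho$ is preserved under this translation, which holds because the geometric construction, via equivariant homology of Springer fibres for $\rZ(s)$ and $u$, is literally unchanged when $s$ is replaced by $sz$.

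For the last assertion: an unramified character is trivial on $G_\rho^1$, which contains every compact subgroup $K$. Hence $(\pi\otimes\chi)|_K=\pi|_K$.
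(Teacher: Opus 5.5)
Your proposal is correct and follows the paper's proof. Conjugating the enhanced parameter by $\gamma\in\Gamma_u$ replaces $(s,\rho)$ by $(s'z,\rho^\gamma)$; translating the Frobenius image by $z\in\rZ_{G^\vee}^\circ$ then corresponds under the unipotent LLC to twisting by an unramified character $\chi_z$ (the paper cites Haines for $\chi_z$), and $\chi_z$ is trivial on every compact subgroup.

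One small imprecision: $H^1_{\un}(W_F,\rZ_{G^\vee})\cong\rZ_{G^\vee}$ parametrizes weakly unramified characters rather than unramified ones. Weakly unramified characters need not be trivial on non-parahoric compact subgroups, so it matters that $z$ lies in $\rZ_{G^\vee}^\circ$, as you assume.
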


\begin{proof}
We have
\begin{equation*}
\pi(u, s, \rho) = \pi(u, \gamma s \gamma^{-1}, \rho^{\gamma}) = \pi(u, s'z, \rho^\gamma) = \pi(1, s, \rho^\gamma) \otimes \chi_z,
\end{equation*}
where $\chi_z$ is the unramified character of $G_\rho$ corresponding to $z$ as in \cite[Section 3.3.1]{Hai14}. 
\end{proof}

\subsection{Levi subgroups}

Let $M^\vee \subset G^\vee$ be a Levi subgroup. 
Since $G$ is split, $M^\vee$ is relevant for $G$ as defined in \cite[Section 3.4]{Bor}, so $M^\vee$ determines conjugacy class of Levi subgroups of $G$ via the bijection of \cite[Section 3.3]{Bor}. 

Next we recall how pure inner twists of Levi subgroups of $G$ give rise to pure inner twist of $G$, as explained in \cite[Section 5]{Pra1}. Let $\mathbf{P}$ be a parabolic subgroup of $\bG$ with Levi decomposition $\mathbf{P} = \bM\mathbf{N}$. By \cite[Theorem 4.13]{BT1}, the natural map $H^1(F, \mathbf{P}) \to H^1(F, \bG)$ is injective. We have $H^j(F, \mathbf{N}) = 0$ for all $j$, so the short exact sequence
\begin{equation*}
1 \to \mathbf{N} \to \mathbf{P} \to \bM \to 1
\end{equation*}
determines an isomorphism $H^1(F, \bM) \to H^1(F, \mathbf{P})$. Thus we have a natural injective map 
\begin{equation}\label{eqn-H1-Levi}
\iota_{\mathbf{M},\mathbf{G}}\colon H^1(F, \mathbf{M}) \hookrightarrow H^1(F, \bG).
\end{equation}

Thus (\ref{eqn-H1-Levi}) determines a natural injection
\begin{equation}\label{eqn-Inn-Levi}
\InnT^p(M) \hookrightarrow \InnT^p(G),
\end{equation}
where $M = \bM(F)$. 
In this way, each pure inner twist of a Levi $M$ of $G$ determines a unique pure inner twist of $G$.

Let $z\in H^1(\bG,F)$ represent $G'$ (a pure inner twist of $G$). A given Levi subgroup $M$ of $G$ admits a pure inner twist $M'$ that is a Levi subgroup of $G'$ if and only if $z$ lies in the image of $\iota_{\mathbf{M},\mathbf{G}}$. In other words, there is a bijection between the set of $G'$-conjugacy classes of Levi subgroups of $G'$ and the disjoint union over the $G$-conjugacy classes of Levi subgroups $M$ of $G$ such that $\iota_{\mathbf{M},\mathbf{G}}(z^\bM)=z$ with $z^\bM\in H^1(\bM,F)$. If a Levi subgroup $M$ of $G$ has this property, we will call it \emph{$G'$-relevant}. Not all Levi subgroups $M$ of $G$ are $G'$-relevant.
A Levi subgroup $M$ of $G$ is $G'$-relevant if and only if there exists a representative $c\colon\sigma\mapsto c_\sigma$ of $z$ in $Z^1(\bG,F)$ that satisfies $c_\sigma\in\Nor_\bG(\bM)$ for every $\sigma\in\Gal(\overline{F}/F)$.

\begin{ex} Take $G=\PGL_n(F)$ and $G'=\PGL_m(D)$ where $D$ is a central division over $F$ of degree $d$ with $n=md$. Then $G'$ is a pure inner twist of $G$. A Levi subgroup $M$ of $G$ is the image under the canonical projection $\GL_n(F)\to\PGL_n(F)$ of a Levi subgroup $\GL_{n_1}(F)\times\cdots\times\GL_{n_r}(F)$ of $\GL_n(F)$. Then  $M$ is a pure inner twist of a Levi subgroup $M'$ of $G'$ if and only if each $n_i$ is divisible by $d$. If so, let $m_i:=n_i/d$, then the Levi $M'$ is the image of $\GL_{m_1}(D)\times\cdots\times\GL_{m_r}(D)$ under the canonical projection $\GL_m(D)\to\PGL_m(D)$.
\end{ex}

\begin{lem} \label{lem:Levis}
Let $G'$ be a pure inner twist of $G$, and let $M'$ be a Levi subgroup of $G'_{F_s}$. Then $M'$ is a pure inner twist of a Levi subgroup $M$ of $G$.
\end{lem}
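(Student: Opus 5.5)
The plan is to transport $M'$ back to $G$ along an inner twisting, conjugate it to a standard Levi subgroup defined over $F$, and then move the cocycle into that Levi. I read the statement as follows: $M'$ is a Levi subgroup of $G'$ defined over $F$, i.e.\ the Levi factor of an $F$-rational parabolic $\mathbf{P}'\subset\bG'$. Without such a rationality assumption on $M'$ the claim could not produce a pure inner twist of a Levi subgroup of $G$. I will work with the parabolic $\mathbf{P}'$ rather than with $M'$ alone, because normalizers of parabolics are much better behaved than normalizers of Levi subgroups.

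First I fix an inner twisting $\psi\colon \bG_{F_s}\to \bG'_{F_s}$ whose associated cocycle $c_\sigma=\psi^{-1}\circ\sigma(\psi)$ is given by conjugation by an element of $\bG(F_s)$. Since the twist is pure, $c$ is a genuine cocycle in $Z^1(F,\bG)$ representing the class $z$ of $G'$. Then $\psi^{-1}(\mathbf{P}')$ is a parabolic subgroup of $\bG_{F_s}$. Because $\bG$ is split, it is $\bG(F_s)$-conjugate to a standard parabolic $\mathbf{P}=\bM\mathbf{N}$ defined over $F$, whose standard Levi $\bM$ contains a fixed split maximal torus. Let $g\in\bG(F_s)$ be such that $g\psi^{-1}(\mathbf{P}')g^{-1}=\mathbf{P}$. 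Replacing $\psi$ by $\psi\circ \mathrm{Int}(g^{-1})$ replaces $c$ by the cohomologous cocycle $\sigma\mapsto g c_\sigma \sigma(g)^{-1}$, and the new twisting satisfies $\psi(\mathbf{P})=\mathbf{P}'$. Since $\mathbf{P}$ and $\mathbf{P}'$ are both defined over $F$, we get $\sigma(\psi)(\mathbf{P})=\mathbf{P}'$ for all $\sigma$. Hence each $c_\sigma$ normalizes $\mathbf{P}$, so $c_\sigma\in \Nor_{\bG}(\mathbf{P})=\mathbf{P}$, and $c\in Z^1(F,\mathbf{P})$.

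Next I use the isomorphism $H^1(F,\bM)\cong H^1(F,\mathbf{P})$ recalled just before (\ref{eqn-H1-Levi}), which comes from the vanishing of $H^1(F,\mathbf{N})$. It gives $p\in\mathbf{P}(F_s)$ such that $\sigma\mapsto p c_\sigma\sigma(p)^{-1}$ takes values in $\bM(F_s)$. Setting $\psi''=\psi\circ\mathrm{Int}(p^{-1})$, we still have $\psi''(\mathbf{P})=\mathbf{P}'$, and now the cocycle $c''$ of $\psi''$ lies in $\bM$. Then $\sigma(\psi''(\bM))=\psi''(c''_\sigma\bM c''^{-1}_\sigma)=\psi''(\bM)$, so $\psi''(\bM)$ is a Levi factor of $\mathbf{P}'$ defined over $F$. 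Rational Levi factors of $\mathbf{P}'$ are conjugate under $\mathbf{N}'(F)$, where $\mathbf{N}'$ is the unipotent radical of $\mathbf{P}'$. So after composing $\psi''$ with $\mathrm{Int}(n)$ for a suitable $n\in\mathbf{N}'(F)$, which does not change the cocycle, we obtain $\psi''(\bM)=M'$.

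Finally, the restriction $\psi''|_{\bM}\colon \bM_{F_s}\to \mathbf{M}'_{F_s}$ is an inner twisting whose cocycle $c''$ is an $\bM$-valued cocycle. This exhibits $M'$ as a pure inner twist of $M=\bM(F)$, with $\iota_{\bM,\bG}([c''])=z$, which proves the claim. The main obstacle is the passage from a cocycle normalizing a Levi subgroup to one valued in it, since $\Nor_{\bG}(\bM)$ is in general strictly larger than $\bM$. Working with the parabolic $\mathbf{P}'$, which is self-normalizing, together with the isomorphism $H^1(F,\bM)\cong H^1(F,\mathbf{P})$, is exactly what circumvents this.
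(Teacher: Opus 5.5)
Your proof is correct, and it takes a different route from the paper at the decisive step.

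The paper transports $M'$ itself, rather than a parabolic, to a standard Levi $M$, so it only obtains $\tilde z_\sigma\in\Nor_{G}(M)$. It then asserts that $\Ad(\tilde z_\sigma)|_{M}$ is inner ``since $M$ is $F$-split'', concludes $\tilde z_\sigma\in M\cdot\rZ_{G}(M)=M$, and adds a (then superfluous) argument with the homomorphism $\sigma\mapsto w_\sigma\in W(M)$. Splitness of $M$ alone does not justify that assertion, because elements of $\Nor_{G}(M)$ can act on $M$ by outer automorphisms. Indeed, take $G'=G=\SL_2$ and $M'$ an anisotropic maximal $F$-torus. This is an $F$-subgroup that is a Levi subgroup of $G'_{F_s}$, yet it is not an inner twist of the split torus, so the statement as literally written fails there.

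Your reading of the hypothesis, that $M'$ is a Levi factor of an $F$-rational parabolic $\mathbf P'$, is therefore the right one. Working with the self-normalizing $\mathbf P'$ supplies exactly the input the paper's argument leaves implicit. The cost of your route is two extra standard facts: $H^1(F,\bM)\cong H^1(F,\mathbf P)$, and $\mathbf N'(F)$-conjugacy of rational Levi factors.

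You could avoid both. Choose $g\in\bG(F_s)$ carrying the pair $(\psi^{-1}(\mathbf P'),\psi^{-1}(M'))$ to $(\mathbf P,\bM)$ simultaneously; this is possible because Levi factors of $\mathbf P_{F_s}$ are $\mathbf N(F_s)$-conjugate. The resulting cocycle then normalizes both $\mathbf P$ and $\bM$, so it lies in $\Nor_{\mathbf P}(\bM)=\bM$ directly. This also shows that the paper's inner-ness claim does hold, once the parabolic is matched along with the Levi.
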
 
\begin{proof} Fix a split maximal torus $T\subset G$ and a Borel subgroup $B\supset T$. There exists $\xi\colon G_{F_s}\to G_{F_s}'$ satisfying $\xi^{-1}\circ \sigma(\xi)=\Ad(z_\sigma)$ for all $\sigma\in\Gal(F_s/F)$, with $z\in Z^1(F,G)$, and $z_\sigma:=z(\sigma)$. Let $M'$ be a Levi subgroup of $G'_{F_s}$, and set $\tilde M:=\xi^{-1}(M')$. Then $\tilde M$ is a Levi subgroup of $G_{F_s}$. Hence, there exists $g\in G(F_s)$ such that $g\tilde Mg=M$ for some standard Levi subgroup $M$ of $G$ (that is, such that $M\supset T$). By applying $\xi$, we obtain $\xi(g) M'\xi(g)^{-1}=\xi(M)$. So, setting $\tilde\xi:=\xi\circ \Ad(g^{-1})$, we get  $\tilde\xi(M_{F_s})=M'_{F_s}$. Then, for every $\sigma\in\Gal((F_s/F)$, we have 
$\tilde\xi^{-1}\circ \sigma(\tilde\xi)=\Ad(\tilde z_\sigma)$, where $\tilde z_\sigma:=gz_\sigma\sigma(g)^{-1}$. It gives
\[\Ad(\tilde z_\sigma)(M)=\tilde\xi^{-1}(\sigma(\tilde\xi(M))=\tilde\xi^{-1}(\sigma(M'))=\tilde\xi^{-1}(M')=M,\]
that is, $\Ad(\tilde z_\sigma)\in\Nor_G(M)(F_s)$. So the cocycle $z$ actually defines a class $[\tilde z]\in H^1(F,\Nor_G(M))$.
We define a twisted Galois action on $M_{F_s}$ by \[\sigma\ast m:=\tilde z_\sigma\sigma(m)\tilde z_\sigma^{-1}, \quad\text{for every $\sigma\in\Gal(F_s/F)$.}\]
Let $w_\sigma$ denote the class of $\tilde z_\sigma$ in $W(M):=\Nor_G(M)/M$. The map $w\colon\sigma\mapsto w_\sigma$ is a $1$-cocycle, that is, for all $\sigma,\tau\in\Gal(F_s/F)$, we have $w_{\sigma\tau}=w_\sigma\cdot {}^{\sigma} w_\tau=w_\sigma\cdot w_\tau$, since $\Gal(F_s/F)$ acts trivially on $W(M)$ because $G$ is split. We will see that $w$ is actually a $1$-coboundary.
The fact that $\Ad(\tilde z_\sigma)\in\Nor_G(M)(F_s)$ shows that $\sigma\ast M_{F_s}=M_{F_s}$. Hence $\Ad(\tilde z_\sigma)|_{M}$ is an automorphism of $M$, and since $M$ is $F$-split, it is an inner automorphism of $m$. Thus, there exists $m_\sigma\in M$ such that $\Ad(\tilde z_\sigma)|_{M}=\Ad(m_\sigma)$, that is, since $\Gal(F_s/F)$ acts trivially on $M$, 
\[\tilde z_\sigma m\tilde z_\sigma^{-1}=m_\sigma m m_\sigma^{-1}.\] 
Hence, we have $m_\sigma^{-1}\tilde z_\sigma\in\Cent_G(M)\subset M$.
It follows that $w_\sigma=1$ in $H^1((F,W(M))$. Hence, there exists $w\in W(M)$ such that $w_\sigma=w^{-1}\sigma(w)$. Pick a lift $n\in\Nor_G(M)(F_s)$ of $w$. We set $z'_\sigma:=n\tilde z_\sigma\sigma(n)^{-1}$ and $\xi':=\tilde\xi\circ\Ad(n)$.  We have $z'\in Z^1(F,M)$ and $\xi'\circ\sigma(\xi')=\Ad(z'_\sigma)$. The restriction $\xi'|_{M}\colon M_{F_s}\to M'_{F_s}$ satisfies 
$\xi'|_{M}\circ\sigma(\xi'|_{M})=\Ad(z'_\sigma)$ on $M_{F_s}$, that is, $M'$ a pure inner twist of $M$. The above construction required choices (choosing a conjugating $g$ and a lift $n\in \Nor_G(M)$ of $w\in W(M)$. A different choice replaces $z'$ by a cohomologous cocycle, so the class in $H^1(F,M)$ is determined up to the natural equivalence.
\end{proof}

\subsection{Parahoric restriction and parabolic induction}\label{s:ind-res-prelim}

We need to know how the operations of parahoric restriction and parabolic induction interact.  Let $K_0$ be a parahoric subgroup of $G$ and $K_0\le K\le N_G(K_0)$. Let $U_K$ denote the pro-unipotent radical of $K$, and write $\overline{K} = K/U_K$ for its reductive quotient. The parahoric restriction of an admissible $G$-representation $V$ is $\res_K^G(V) = V^{U_K}$, viewed as a finite-dimensional $K$- (or $\overline K$-) representation. 
  We also write $\res_K^G: R_{\un}(G) \to R_{\un}(\overline{K})$ for the corresponding linear map on Grothendieck groups. We may write $\res_K$ if the group $G$ is understood.
Given a parabolic subgroup $P$ of $G$, write $\overline{P \cap K}$ for the quotient $(P \cap K)/(P \cap U_K)$, and similarly for Levi subgroups of $G$. 

Fix a maximal split torus $T \subset G$, and let $\cA_T$ be the apartment associated to $T$ in the Bruhat--Tits building of $G$. Fix an alcove $\mathfrak{a}$ in $\cA_T$ and a hyperspecial vertex $x_0$ in the closure of $\mathfrak{a}$. These choices determine a Borel subgroup $B$ containing $T$ (cf. \cite[Section 2.5]{Hai09}), and we call a parabolic subgroup $P$ of $G$ standard if $B \subset P$. We call a Levi subgroup $M$ of $G$ standard if it is the Levi factor of a standard parabolic. The choice of basepoint $x_0$ allows us to identify $\cA_T$ with $X_*(T) \otimes \bR$ (here $X_*(T)$ is the cocharacter lattice of $T$). Let $W = W_G$ be the Weyl group of $G$ with respect to $T$.

Given $g \in G$ and a subgroup $H$ of $G$, we write $H^g$ for $gHg^{-1}$. We first prove two preliminary lemmas about the structure of $G$.

\begin{lem}\label{l:levi-compact}
Let $n \in N_G(T)$, and let $K = K_x$ be the parahoric subgroup associated to a point $x \in \cA_T$. Let $P = MN$ be a standard parabolic subgroup of $G$. Then 
\begin{itemize}
\item[(1)] $M^n \cap K$ is a parahoric subgroup of $M$, and 
\item[(2)]  $\overline{P^n \cap K}$ is a parabolic subgroup of $\overline K$ with Levi decomposition $\overline{P^n \cap K} = (\overline{M^n \cap K})(\overline{N^n \cap K})$. 
\end{itemize}
\end{lem}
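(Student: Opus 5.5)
First reduce to the case $n=1$ by conjugating everything by $n^{-1}$. Since $n \in N_G(T)$, we have $n^{-1}Kn = K_{n^{-1}\cdot x}$, where $n^{-1}\cdot x \in \cA_T$ because $N_G(T)$ acts on $\cA_T$ by affine transformations. Conjugation is an isomorphism carrying $M^n$, $P^n$, $N^n$ to $M$, $P$, $N$, and it carries $U_K$ to $U_{K_{n^{-1}x}}$. So both assertions are invariant under it, and it suffices to treat $M$, $P = MN$ standard (so $T\subset M$) and an arbitrary point $y := n^{-1}x \in \cA_T$.

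For (1), use the Bruhat--Tits description of $K_y$. It is generated by $T(\mathfrak{o}_F)$ (more precisely the parahoric of $T$) together with the affine root subgroups $U_{\alpha,k}$ for affine roots $\alpha+k$ with $(\alpha+k)(y)\ge 0$, where $\alpha$ runs over the roots of $T$ in $G$. Since $T \subset M$, the apartment of $T$ in the building of $M$ is $\cA_T$ modulo a translation, and $y$ defines a point of it. The group $K_y \cap M$ is then generated by $T(\mathfrak{o}_F)$ and the $U_{\alpha,k}$ with $\alpha \in \Phi(M,T)$ and $(\alpha+k)(y)\ge0$. The justification is the Iwahori-type decomposition of $K_y$ with respect to $P = MN$,
\[K_y = (K_y\cap N^-)(K_y\cap M)(K_y\cap N),\]
which holds for any parahoric attached to a point of the apartment of a maximal split torus contained in $M$ (Bruhat--Tits). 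This description is exactly the parahoric $M_y$ of $M$ attached to $y$. Because $G$ is split, connectedness issues (Kottwitz kernel) are compatible here, so $K_y\cap M$ is the parahoric and not merely the fixer of $y$.

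For (2), pass to the reductive quotient. The quotient $\overline{K_y}$ has maximal torus $\overline{T(\mathfrak{o}_F)}$ and root system $\Phi_y = \{\alpha : \alpha(y)\in\bZ\}$. Its root subgroups are the images of $U_{\alpha,-\alpha(y)}$, and $U_{K_y}$ is generated by $T(1+\mathfrak p)$ and the $U_{\alpha,k}$ with $(\alpha+k)(y)>0$. Choose a cocharacter $\lambda$ defining $P$, so that $P = P(\lambda)$. Its reduction gives a cocharacter of $\overline{T}$, and $\overline{P\cap K_y}$ is the subgroup generated by $\overline T$ and the root groups for $\alpha\in\Phi_y$ with $\langle\alpha,\lambda\rangle\ge0$. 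This is the parabolic $\overline{P}(\bar\lambda)$ of $\overline{K_y}$. Its Levi factor, coming from $\langle\alpha,\lambda\rangle=0$, is $\overline{M\cap K_y}$, and its unipotent radical, coming from $\langle\alpha,\lambda\rangle>0$, is $\overline{N\cap K_y}$. Here we use the decomposition of $K_y\cap P$ as $(K_y\cap M)(K_y\cap N)$ and the fact that $U_{K_y}\cap P = (U_{K_y}\cap M)(U_{K_y}\cap N)$, again by the Iwahori factorization of the pro-unipotent radical. Finally, $\overline{M\cap K_y}$ is the reductive quotient of the parahoric $M_y$ from (1), and $U_{K_y}\cap M = U_{M_y}$.

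The main obstacle is justifying these intersection and product decompositions, namely $K_y\cap P = (K_y\cap M)(K_y\cap N)$ and the analogous statement for $U_{K_y}$. I would cite Bruhat--Tits (the ``bonne d\'ecomposition'' of schematic root-datum groups) or Moy--Prasad, rather than reprove them.
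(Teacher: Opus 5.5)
Your proof is correct and follows the paper's argument. Both describe $M^n\cap K$ and $P^n\cap K$ by generators ($\bT(\cO)$ and affine root subgroups). Both then observe that the image of $P^n\cap K$ in the reductive quotient is generated by the torus and the root groups for $(\Phi_M\cup\Phi^+)\cap\Phi_y$ (in your normalization $n=1$, $y=n^{-1}x$), and recognize this as a parabolic with Levi factor $\overline{M^n\cap K}$ and unipotent radical $\overline{N^n\cap K}$. The only difference is that last recognition step. The paper embeds $\overline K$ into $\bG(\Fq)$, intersects with the parabolic $Q=\langle \bT(\Fq), u_{w(\alpha)}(\Fq)\mid \alpha\in\Phi_M\cup\Phi^+\rangle$, and cites Digne--Michel. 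You instead identify the image intrinsically as $P_{\overline K}(\bar\lambda)$ for a cocharacter $\lambda$ with $P=P(\lambda)$. This avoids the embedding, and your appeal to Bruhat--Tits' bonne d\'ecomposition justifies the generator descriptions, which the paper leaves implicit.
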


\begin{proof}
Thinking of $\bG$ as a Chevalley group defined over $\bZ$ with maximal torus $\bT$ and corresponding root system $\Phi$, we fix root-group homomorphisms $u_\al: \mathbb{G}_a \to \bG$ for the roots $\al \in \Phi$ compatible with our choice of hyperspecial vertex $x_0$ above. 
Let $\Phi_M$ be the sub-root system corresponding to the Levi $M$. Then $P$ is generated by $T$ and by the root groups $U_\al$ such that $\al \in \Phi_M$ or $\al > 0$. Write $w$ for the image of $n$ in the Weyl group, let $\cO$ denote the ring of integers of $F$, and let $\varpi \in \cO$ be a uniformizer. We have that
\begin{equation*}
M^n \cap K = \la \bT(\cO), u_{w(\al)}(\varpi^m\cO) \mid \al \in \Phi_M, w\al(x) + m \geq 0 \ra.
\end{equation*}
If we identify $\cA_T$ with the apartment of $T$ in the Bruhat--Tits building of $M$, it is clear that this is the parahoric subgroup of $M$ corresponding to $x$. This proves (1).

Next, we have
\begin{equation*}
P^n \cap K = \la \bT(\cO), u_{w(\al)}(\varpi^m\cO) \mid \al \in \Phi_M \cup \Phi^+, w\al(x) + m \geq 0 \ra,
\end{equation*}
and similarly with $P^n \cap U_K$ (replacing the inequality with a strict inequality and $\bT(\cO)$ with the appropriate subgroup of $T$). 
Given a point $y \in \cA_T$, let $\Phi_y = \{\al \in \Phi \mid \al(y) \in \bZ\}$. We identify $\overline K$ with the subgroup $\la \bT(\Fq), u_{w(\al)}(\Fq) \mid \al \in \Phi_{w^{-1}(x)} \ra$ of $\bG(\Fq)$. Under this identification, we have
\begin{equation*}
\overline{P^n \cap K} = \la \bT(\Fq), u_{w(\al)}(\Fq) \mid \al \in (\Phi_M \cup \Phi^+) \cap \Phi_{w^{-1}(x)} \ra,
\end{equation*}
a subgroup of $\overline K$. Let $Q = \langle \bT(\Fq), u_{w(\al)}(\Fq) \mid \al \in \Phi_M \cup \Phi^+\rangle$, a parabolic subgroup of $\bG(\Fq)$. Then $Q \cap \overline K = \overline{P^n \cap K}$ is a parabolic subgroup of $\overline K$  by \cite[Proposition 2.2]{DM91}. 

The fact that $\overline{M^n \cap K}$ is a Levi subgroup of $\overline{P^n \cap K}$ follows from \cite[Proposition 2.2]{DM91} using similar logic, and the rest of the result follows easily.

\end{proof}

\begin{rem}
We will also need similar results in the case when $K$ is not necessarily a parahoric, i.e. when we have a containment $K_0 \subset K \subset N_G(K_0)$ as above but $K \neq K_0$. In this case the reductive quotient $\overline{K}$ is disconnected with identity component $\overline{K_0}$. 
More specifically, $\overline{K} = \overline{K_0} \rtimes A$ for some finite abelian group $A$ (which is a subgroup of the stabilizer in $G$ of $\mathfrak{a}$). With $P = MN$ as in the lemma, we have that $\overline{P^n \cap K}$ is a parabolic subgroup of $\overline{K}$ with quasi-Levi factor $\overline{M^n \cap K}$ (cf. \cite[4.8.1]{GM}). 
\end{rem}

For every $w\in W$, let $\dot w$ denote a representative in $N_G(T)$. Let $I$ be the Iwahori subgroup corresponding to our fixed alcove $\mathfrak{a} \subset \cA_T$. Given a standard Levi subgroup $M$, write $W_M$ for the parabolic subgroup of $W$ corresponding to $M$. The following lemma is a known consequence of the Iwasawa decomposition, see \cite[\S11.2]{GHKR}.
\begin{lem}\label{l:decomp}
\begin{enumerate}
    \item   $G=\bigsqcup_{w\in W} I \dot w B$.
  \item More generally, if $P = MN$ is a standard parabolic, then $G=\bigsqcup_{w\in W/W_M} I \dot w P$.
    \end{enumerate}
\end{lem}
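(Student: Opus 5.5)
The plan is to obtain both decompositions from the Iwasawa decomposition $G = \mathcal{K}B$, where $\mathcal{K} = K_{x_0}$ is the hyperspecial maximal compact subgroup attached to $x_0$. The point is to refine $\mathcal{K}$ into Iwahori double cosets by passing to the reductive quotient $\overline{\mathcal{K}} = \bG(\Fq)$. The Iwahori $I \subset \mathcal{K}$ is the preimage of a Borel subgroup $\overline{B}_0$ of $\bG(\Fq)$ under the reduction map $\mathcal{K} \to \bG(\Fq)$. The finite Bruhat decomposition gives $\bG(\Fq) = \bigsqcup_{w \in W} \overline{B}_0 \dot w \overline{B}$, where $\overline{B}$ is the image of $B \cap \mathcal{K}$. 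Pulling back, and using that the kernel $U_{\mathcal{K}}$ of reduction lies in $I$, we get $\mathcal{K} = \bigsqcup_{w\in W} I \dot w (B\cap \mathcal{K})$, with the representatives $\dot w$ chosen in $N_G(T)\cap \mathcal{K}$.

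Next I would choose the conventions so that this matches. By the compatibility of $B$ with $\mathfrak{a}$ and $x_0$ (the convention of \cite[Section 2.5]{Hai09}), the image $\overline{B}$ is either equal to $\overline{B}_0$ or opposite to it. In either case the finite Bruhat decomposition with respect to the pair $(\overline{B}_0, \overline{B})$ holds, because two Borels containing $\overline{T}$ are conjugate by an element of $W$, and the indexing set is still $W$. Combining with $G = \mathcal{K}B$ gives
\[
G = \bigcup_{w\in W} I\dot w (B\cap\mathcal{K}) B = \bigcup_{w \in W} I \dot w B.
\]

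The main obstacle is disjointness, since the Iwasawa decomposition is not disjoint over $\mathcal{K}$. Suppose $I\dot w B \cap I \dot v B \neq \emptyset$, so that $\dot w b = i\dot v b'$ with $i \in I$ and $b, b' \in B$. Then $\dot v^{-1} i^{-1}\dot w = b' b^{-1}$ lies in $B \cap \mathcal{K}$, because the left-hand side lies in $\mathcal{K}$. Hence $\dot w \in I \dot v (B\cap\mathcal{K})$. Reducing modulo $U_{\mathcal{K}}$ puts $\dot w$ and $\dot v$ in the same double coset $\overline{B}_0\dot v\overline{B}$, so $w = v$ by the disjointness of the finite Bruhat decomposition. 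This gives (1).

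For (2), write $P = MN$ with $P = \bigsqcup_{u \in W_M} (B\cap M)\dot u B$ (Bruhat decomposition in $M$, using $N \subset B$). Then
\[
I\dot w P = \bigcup_{u\in W_M} I \dot w (B\cap M) \dot u B.
\]
Rather than expanding this, argue directly. The same reduction argument as above, applied to the parabolic $\overline{P} = \overline{P\cap\mathcal{K}}$ of $\bG(\Fq)$, shows that $G = \mathcal{K}P$ refines to $G=\bigcup_{w \in W/W_M} I\dot w P$. This uses the finite decomposition $\bG(\Fq) = \bigsqcup_{W/W_M}\overline{B}_0\dot w\overline{P}$. Disjointness is proved exactly as in (1): from $I\dot wP \cap I\dot vP\neq\emptyset$ we get $\dot w\in I\dot v(P\cap\mathcal{K})$, and reducing shows $w$ and $v$ have the same class in $W/W_M$ by the finite parabolic Bruhat decomposition. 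Alternatively, (2) follows from (1) by grouping the cosets $I\dot w B$ according to the class of $w$ in $W/W_M$.
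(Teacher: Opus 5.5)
Your argument is correct, and its skeleton matches the paper's. Both use the Iwasawa decomposition $G=\mathcal K B$, refine the hyperspecial subgroup into Iwahori cosets, and get disjointness from the observation that $\dot v^{-1}i^{-1}\dot w=b'b^{-1}$ lies in $B\cap\mathcal K$.

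The difference is in the refinement step. The paper starts from $K_0=IWI$ and collapses $IWIB$ to $IWB$ using the Iwahori factorization $I=I^-\cdot(B\cap I)$ and the inclusion $\dot wI^-\dot w^{-1}\subset I$. For disjointness it uses $B\cap K_0\subset I$ to get $I\dot w'I=I\dot wI$, then appeals to the disjointness of $I\backslash K_0/I$. You instead pull back the finite Bruhat decomposition of $\bG(\Fq)$ for the pair $(\overline B_0,\overline B)$, which gives $\mathcal K=\bigsqcup_w I\dot w(B\cap\mathcal K)$ in one step, and you identify $w$ after reducing modulo $U_{\mathcal K}$.

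What your route buys:
\begin{itemize}
\item It is insensitive to the relative position of $B$ and the alcove. The inclusions $B\cap K_0\subset I$ and $\dot wI^-\dot w^{-1}\subset I$ hold only when $I$ is the preimage of the reduction of $B$ rather than of its opposite, a convention the paper relies on tacitly.
\item It gives (2) an explicit disjointness proof, whereas the paper only says (2) follows from (1).
\end{itemize}
The paper's version, for its part, works directly with subgroups of $G$ and is shorter once the Iwahori factorization is granted.

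One caution about your closing alternative, deriving (2) from (1) by grouping cosets. That only yields the covering. Each $I\dot wP$ is a union of $(I,B)$-double cosets containing those indexed by $wW_M$, and excluding the others is exactly your reduction argument. Keep the reduction argument as the proof of (2).
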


\begin{proof}
Let $K_0$ be a maximal hyperspecial subgroup containing the Iwahori subgroup $I$, such that $K_0=IWI$. Since $G=K_0B$, we have $G=IWIB$. Use the Iwahori decomposition $I=I^-\cdot (B\cap I)$, and get $G=IWI^-B$. Finally, for every $w\in W$, $w(I^-)\subset I$, so we may move $I^-$ to the left and absorb it into $I$. Hence $G=IWB$. Suppose $\dot w'\in I\dot w B$, then $\dot w'=i\dot w b$, for some $i\in I$, $b\in B$. This implies $b\in B\cap K_0\subset I$, and so $I \dot w' I=I w I$, hence $\dot w'=\dot w$, and (1) is proven. Claim (2) follows from (1).

\end{proof}

Now we consider parabolic induction. 
Given an $F$-parabolic $P = MN$ of $G$, we write $i_P^G: R_\un(M) \to R_\un(G)$ for the linear map given by (normalized) parabolic induction on irreducible representations (we also write $i_M^G$ for this map). We extend $i_P^G$ to a
linear map
\begin{equation}\label{r:par-ind-twists}
\bigoplus_{M' \in \InnT^p(M)} R_{\un}(M') \longrightarrow \bigoplus_{G' \in \InnT^p(G)} R_{\un}(G')
\end{equation}
defined as follows: given $M' \in \InnT^p(M)$ and $\pi \in \InnT^p(M)$, let $G'$ be the image of $M'$ under (\ref{eqn-Inn-Levi}). The map is then given by $\bigoplus_{M' \in \InnT^p(M)} i_{M'}^{G'}$. We continue to write $i_P^G$ (or $i_M^G$) for this linear map.

Now let $P=MN$ be a standard parabolic subgroup. Let $V=i_P^G(V_0)$ be a (normalized) parabolically induced representation, where $(\pi_0,V_0)$ is an irreducible $M$-representation inflated to $P$. By Mackey's formula (see for example \cite[Theorem]{Ku}):
\[
    V|_K=i_P^G(\pi_0)|_K=\bigoplus_{g\in K\backslash G/P} \Ind_{K\cap P^g}^K (\pi_0^g),
\]
where $P^g=gPg^{-1}$ and $\pi_0^g(x)=\pi_0(g^{-1}xg).$ Take $U_K$-fixed vectors (recall that $U_K$ is normal in $K$):
\begin{equation}\label{e:kutzko}
V^{U_K}|_K=\bigoplus_{g\in K\backslash G/P} \left(\Ind_{K\cap P^g}^K (\pi_0^g)\right)^{U_K}
\end{equation}

\begin{rem}
 Let $K$ be a compact subgroup of $G$ containing $I$, and 
    let $P = MN$ be a standard Levi. 
    Note that by Lemma \ref{l:decomp} we can take double-coset representatives for $K\backslash G/P$ in $N_G(T)$.
    Denote by $^K\!W^P\subset N_G(T)$ a set of representatives for the double cosets $K\backslash G/P$. 
\end{rem}

We have the following more detailed description of $^K\!W^P$.

\begin{prop}
    With the same notation as above, there is a one-to-one correspondence between $^K\!W^P$ (and hence $K\backslash G/P$) and $W_K\backslash W/W_M$, where $W_K$ is the image of the projection of the Iwahori--Weyl group of $K$ to the finite Weyl group.
\end{prop}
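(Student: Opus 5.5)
We need a bijection $K\backslash G/P \leftrightarrow W_K\backslash W/W_M$, with $K \supseteq I$ compact, $P=MN$ standard, and $W_K$ the image in $W$ of the Iwahori--Weyl group of $K$.

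\emph{Surjectivity.} By Lemma \ref{l:decomp}(2), $G=\bigsqcup_{w\in W/W_M} I\dot w P$. Since $I\subset K$, every double coset $K g P$ contains some $\dot w$, so the map
\[
W\to K\backslash G/P,\qquad w\mapsto K\dot wP
\]
is surjective. It is well defined: $K\dot wP$ depends only on $wW_M$, because representatives of $W_M$ can be taken in $N_M(T)\subset P$, and $\dot w$ is determined up to $T\cap K\cdot$(torus of $P$).

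\emph{Descent to $W_K$-cosets.} Next we check that the map factors through $W_K\backslash W$. The Iwahori--Weyl group $\widetilde W_K=(N_G(T)\cap K)/T(\cO)$ gives $K=I\widetilde W_K I$. For $n\in N_G(T)\cap K$ with image $v\in W_K$ and translation part $t\in T$, we have $n\dot w\in K\dot wP$. Moreover $n\dot w$ equals $\dot{(vw)}$ times an element of $T$ lying in $B\subset P$. Hence $K\dot{(vw)}P=K\dot wP$.

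\emph{Injectivity (the main step).} Suppose $K\dot w P=K\dot w'P$. Write $K=\bigsqcup_{\tilde v\in \widetilde W_{K}/\widetilde W_I} I\tilde v I$, where $\widetilde W_I=\Omega$-type stabilizer part. Then $\dot w'\in I\tilde v I\dot w P$ for some $\tilde v$. Using the Iwahori factorization $I=(I\cap \bar U)(I\cap B)$ as in the proof of Lemma \ref{l:decomp}, we push $I$ across $\dot w$. The factor $\dot w^{-1}(I\cap\bar U)\dot w$ splits into a part in $B$, absorbed into $P$, and a part in $\dot w^{-1} I^-\dot w\cap \bar U$. The latter is the delicate piece.

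A cleaner route is to reduce to the Iwahori case. Combining $I\tilde v I\subseteq \bigcup I\tilde v' B$-type products (Bruhat--Tits axioms: $I s I\cdot I\dot w B\subseteq I\dot w B\cup I s\dot w B$ for affine simple reflections $s$) with induction on the length of $\tilde v$, we get $I\tilde vI\dot wP\subseteq\bigcup_{u} I\dot{(u w)}P$, where $u$ runs over images in $W$ of elements below $\tilde v$ in the Bruhat order. These images lie in $W_K$. By disjointness in Lemma \ref{l:decomp}(2), $w'W_M=uwW_M$ for some $u\in W_K$, which proves injectivity.

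Besides the Bruhat-order step, the other point needing care is when $K$ contains elements of $N_G(K_0)\setminus K_0$ (length-zero elements $\omega$). Conjugation by $\omega$ preserves $I$, and $\omega\dot w\in \dot{(\bar\omega w)}T$. So the same argument applies with $W_K$ including the finite images $\bar\omega$.

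Finally, choosing one $\dot w$ per $W_K\backslash W/W_M$ double coset yields the set ${}^K\!W^P$, giving the claimed correspondence.
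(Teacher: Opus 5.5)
Your scheme is correct and is the argument the paper has in mind. Surjectivity comes from Lemma~\ref{l:decomp}(2), the map descends through $W_K$, and injectivity follows by showing $K\dot wP\subseteq\bigcup_{u\in W_K}I\dot{(uw)}P$ and then using disjointness. The paper only cites \cite[Lemma 4.5.2]{Hai09} for parahoric $K$ and says the general case goes the same way starting from Lemma~\ref{l:decomp}. Your handling of the length-zero elements $\omega$ covers that non-parahoric case.

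\textbf{The step you have not justified.} It is the containment
\[
In_sI\cdot I\dot wB\subseteq I\dot wB\cup I\dot{(\bar s w)}B .
\]
This is not a Bruhat--Tits (Tits system) axiom. The axiom controls $IsI\cdot I\tilde xI$, and a single $I$-double coset meets several of the sets $I\dot wB$, so the axiom says nothing directly about $I$-orbits on $G/B$. The containment is true, and a rank-one computation proves it.

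Write $s=s_a$ with $a=\alpha+k$ a simple affine root, and $X_a=u_\alpha(\varpi^k\cO)$. Then $In_sI=In_sX_a$, so $In_sI\cdot I\dot wB=In_sX_a\dot wB$.
\begin{enumerate}
\item If $w^{-1}\alpha>0$, then $X_a\dot wB=\dot wB$, and the set equals $In_s\dot wB=I\dot{(\bar sw)}B$.
\item If $w^{-1}\alpha<0$, take $u_\alpha(y)\in X_a$ with $y\neq 0$. Use the $\SL_2$ identity $u_\alpha(y)=u_{-\alpha}(y^{-1})\,m_\alpha(y)\,u_{-\alpha}(y^{-1})$ (up to signs), where $m_\alpha(y)$ represents $s_{\alpha+\mathrm{val}(y)}$. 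Since $u_{-\alpha}(\cdot)\dot wB=\dot wB$ and $n_sm_\alpha(y)\in T$, you get $n_su_\alpha(y)\dot wB=u_\alpha(z)\dot wB$ with $\mathrm{val}(z)=2k-\mathrm{val}(y)\le k$.
\begin{enumerate}
\item If $\mathrm{val}(z)=k$, then $u_\alpha(z)\in I$ and the coset lies in $I\dot wB$.
\item If $\mathrm{val}(z)<k$, apply the identity once more to $u_\alpha(z)$. Here $u_{-\alpha}(z^{-1})\in X_{-a+(k-\mathrm{val}(z))}\subseteq I$, because $0<a<1$ on the alcove. So the coset lies in $I\dot{(\bar sw)}B$.
\end{enumerate}
\end{enumerate}
Equivalently, this is the classical two-term formula for the action of $T_s$ on the $I$-fixed vectors of the unramified principal series. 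With this lemma in place, your induction on a reduced expression (together with $P\supseteq B$) and the disjointness in Lemma~\ref{l:decomp}(2) complete the proof.

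\textbf{Minor corrections.} The Iwahori--Weyl group of $I$ is trivial, so the decomposition you need is $K=\bigsqcup_{\tilde v\in\widetilde W_K}I\tilde vI$, not a quotient by an ``$\Omega$-type'' subgroup. The abandoned ``push $I$ across $\dot w$'' paragraph can simply be deleted.
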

\begin{proof}
    When $K$ is a parahoric, this is in \cite[Lemma 4.5.2]{Hai09}. The case when $K$ is a compact subgroup containing $I$ is proved in the same way with Lemma \ref{l:decomp} as the starting point. 
\end{proof}

Finally, we give a formula for the parahoric restriction of a parabolically induced representation. In the case when the parahoric is maximal hyperspecial, this is well known \cite[Proposition 3.1.1]{Cas}.

\begin{prop}\label{p:par-ind}
    Let $K$ be a compact subgroup of $G$ containing $I$, and 
    let $P = MN$ be a standard parabolic subgroup. Let $V=i_P^G(\pi_0)$ be a parabolically induced representation. Then 
    \[
    \res_K^G(V)=\bigoplus_{g\in ^K\!W^P} i_{\overline {K\cap P^g}}^{\overline K}(\res^{M^g}_{K\cap M^g}(\pi_0^g)),
    \]
   In particular, if $K$ is maximal hyperspecial, $\res_K(V)=i_{\overline{K\cap P}}^{\overline K}(\res^M_{K\cap M}(V_0)).$
\end{prop}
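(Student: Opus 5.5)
The plan is to start from the Mackey-type decomposition (\ref{e:kutzko}) and compute each summand separately. By Lemma \ref{l:decomp} and the preceding proposition, the double cosets $K\backslash G/P$ have representatives $g\in {}^K\!W^P\subset N_G(T)$, so it suffices to fix such a $g$ and prove
\[
\left(\Ind_{K\cap P^g}^K (\pi_0^g)\right)^{U_K}\cong i_{\overline{K\cap P^g}}^{\overline K}\bigl((\pi_0^g)^{M^g\cap U_K}\bigr)
\]
as $\overline K$-representations. Then I will identify $(\pi_0^g)^{M^g\cap U_K}$ with $\res^{M^g}_{K\cap M^g}(\pi_0^g)$. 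That identification requires $M^g\cap U_K$ to be the pro-unipotent radical of the parahoric $M^g\cap K_0$ of $M^g$ from Lemma \ref{l:levi-compact}(1). I will check this using the root-subgroup description in that proof: the radical is generated by the subgroup of $\bT(\cO)$ reducing to $1$ together with the $u_{w\al}(\varpi^m\cO)$ satisfying the strict inequality $w\al(x)+m>0$, and this set is exactly $M^g\cap U_K$.

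For the main identity I will use that $U_K$ is normal in $K$. Since $K\cap P^g$ normalizes $U_K$, invariants under $U_K$ of an induced representation can be computed as follows. The group $U_K$ acts on functions $f\colon K\to V_0$, and $f$ is $U_K$-invariant iff it factors through $K/U_K=\overline K$. Hence
\[
\left(\Ind_{K\cap P^g}^K \pi_0^g\right)^{U_K}=\Ind_{(K\cap P^g)U_K/U_K}^{\overline K}\left((\pi_0^g)^{P^g\cap U_K}\right),
\]
using $(K\cap P^g)U_K/U_K\cong (K\cap P^g)/(P^g\cap U_K)=\overline{K\cap P^g}$. Next, $P^g\cap U_K$ has an Iwahori-type factorization $(M^g\cap U_K)(N^g\cap U_K)$, which I will read off from the generators in Lemma \ref{l:levi-compact}. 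The factor $N^g\cap U_K$ acts trivially on $\pi_0^g$, since $\pi_0$ is inflated from $M$. The part of $\overline{K\cap P^g}$ coming from $N^g$ is $\overline{N^g\cap K}$, the unipotent radical of the parabolic $\overline{P^g\cap K}$ by Lemma \ref{l:levi-compact}(2), and it also acts trivially. The induced representation is therefore exactly Harish-Chandra induction $i_{\overline{K\cap P^g}}^{\overline K}$ of the $\overline{M^g\cap K}$-representation $(\pi_0^g)^{M^g\cap U_K}$.

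When $K$ is not a parahoric ($K_0\subsetneq K\subseteq N_G(K_0)$), the same computation goes through. In that case the Remark after Lemma \ref{l:levi-compact} supplies the quasi-Levi structure, so $i$ means induction from the parabolic with quasi-Levi factor $\overline{M^g\cap K}$. A subtle point is the normalization: parabolic induction on $G$ is normalized by $\delta_P^{1/2}$, but $\delta_P$ is trivial on the compact group $K\cap P^g$, so no twist survives. For the hyperspecial case, $K\backslash G/P$ is a single coset by the Iwasawa decomposition $G=KP$, so only $g=1$ contributes.

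I expect the main obstacle to be the factorization $P^g\cap U_K=(M^g\cap U_K)(N^g\cap U_K)$, together with the identification of $M^g\cap U_K$ with the pro-unipotent radical of $M^g\cap K_0$. This needs care because $g$ is a nontrivial Weyl element, and $x$ may be non-generic with respect to the roots of $M^g$. I will handle both points uniformly with the explicit generators and a standard product decomposition of these Moy--Prasad-type filtration groups in a fixed order on the roots (Bruhat--Tits).
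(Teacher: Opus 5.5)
Your proposal is correct and follows essentially the same route as the paper. Both arguments reduce via the Mackey decomposition (\ref{e:kutzko}) to a single double coset, identify $U_K$-invariant vectors with functions on $\overline K$ taking values in the $P^g\cap U_K$-invariants, use the Iwahori factorization of $U_K$ and the triviality of $N^g$ on $\pi_0^g$ to pass to $U_{K\cap M^g}=U_K\cap M^g$-invariants, and invoke $G=KP$ in the hyperspecial case. You also spell out points the paper leaves implicit: treating a general $g$ rather than only $g=1$, the triviality of the modulus character on $K\cap P^g$, and checking $M^g\cap U_K=U_{K\cap M^g}$ via root subgroups.
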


Note that by Lemma \ref{l:levi-compact} $K \cap M^g$ is a compact subgroup of $M^g$ and $\overline{K \cap P^g}$ is a parabolic subgroup of $\overline{K}$, so the notation in the proposition makes sense. 

\begin{proof}
    In light of (\ref{e:kutzko}), we only need to prove that
    \[
    \left(\Ind_{K\cap P}^K(\pi_0)\right)^{U_K}\cong i_{\overline {K\cap P}}^{\overline K} (\pi_0^{U_{K\cap M}}), \text{ as $\overline K$-representations.}
    \]
   Recall that $U_K$ has an Iwahori decomposition $U_K=(U_K\cap N^-)(U_K\cap M)(U_K\cap N)$, where $P^-=MN^-$ is the opposite parabolic. We use the following conventions for induction:
\[
\Ind_{K\cap P}^K(\pi_0)=\{f:K\to V_0 \text{ loc. const.}\mid f(kmn)=\pi_0(m^{-1})f(k),\ k\in K,\ mn\in K\cap P\},
\]
with the left regular action of $K$: $(k'\cdot f)(k)=f({k'}^{-1}k)$, $k,k'\in K$. 
   This means that if $f:K\to V_0$ is an element in the left hand side, then $f(uk)=f(k)$ for all $u\in U_K$, $k\in K$. This is the same as a function $\bar f:\overline K\to V_0$. Since $U_K$ is normal in $K$, this is also equivalent to $f(ku)=f(k)$, $k\in K, u\in U_K$.  This gives us the fact that $f$ is a function on $\overline K$. Take $u\in U_K\cap P$, then $f(k)=f(ku)=\pi_0(u^{-1}) f(k)$,  where the second equality is by the definition of the induced representation, so $f(k)\in V_0^{U_K\cap P}$. Since $U_K\cap N$ acts trivially on $V_0$, this is the same as $f(k)\in V_0^{U_K\cap M}$, i.e., $f$ determines a function $\bar f:\overline K\to V_0^{U_{K\cap M}}$. (Here we are using that $U_{K\cap M}=U_K\cap M$.) Similarly, it is easy to check that the right hand side is contained in the left hand side. Clearly, this identification is $\overline K$-equivariant. This proves the first claim. 

   If $K$ is maximal hyperspecial, then $G=KP$, hence the second claim.
\end{proof}

\subsection{Aside: reduction to Hecke algebras}
It is useful to give an explicit way of computing the parahoric restrictions of parabolically induced unipotent representations. Fix $J$ a parahoric subgroup of $G$ containing the fixed Iwahori subgroup $I$. Let $(\tau,V_\tau)$ be the $J$-inflation of an irreducible cuspidal unipotent representation of $\overline J$. Let $\mathcal C(G,J,\tau)$ denote the Bernstein component in the category of unipotent representations $V$ of $G$ such that $V$ is generated by $V^\tau=\Hom_J(\tau,V)$. Let $\cH(G,J,\tau)$ denote the convolution Hecke algebra of compactly-supported locally-constant functions
\[
\cH(G,J,\tau)=\{f:G\to\operatorname{End}(V_\tau^*)\mid f(j_1gj_2)=\tau^*(j_1)\circ \tau(g)\circ\tau^*(j_2),\ j_1,j_2\in J,\ g\in G\},
\]
where $(\tau^*,V_\tau^*)$ denotes the contragredient representation. There is an equivalence of categories functor
\[
m_\tau: \mathcal C(G,J,\tau)\to \cH(G,J,\tau)\text{-mod},\quad V\mapsto V^\tau.
\]
In the language of \cite{BK}, the pair $(J,\tau)$ is a $G$-type.

Let $(L,\sigma_L)$ denote the supercuspidal support of $\mathcal C(G,J,\tau)$: $L$ is a standard Levi subgroup and $\sigma_L$ is an irreducible supercuspidal $L$-representation. Let $P=MN$ be a standard parabolic subgroup of $G$ such that $L\subseteq M$. Set $J_M=J\cap M$ and $\tau_M=\tau|_{J_M}$. The setting of \cite[\S 8]{BK} applies, see the \cite[(9.5)]{BK}. In particular, by \cite[Proposition 8.5]{BK}, $(J,\tau)$ is a cover of $(J_M,\tau_M)$.

Let $\mathcal C(M,J_M,\tau_M)$ be the corresponding unipotent Bernstein block for $M$ and let $m_{\tau_M}$ denote the equivalence of categories with $\cH(M,J_M,\tau_M)$-mod. By \cite[(7.12)]{BK}, there exists an injective algebra homomorphism $\cH(M,J_M,\tau_M)\hookrightarrow \cH(G,J,\tau)$ and define the parabolic induction at the level of Hecke algebras accordingly
\[
\Ind_{\cH(M,J_M,\tau_M)}^{\cH(G,J,\tau)}(-)=\cH(G,J,\tau)\otimes_{\cH(M,J_M,\tau_M)}(-).
\]
By \cite[Corollary 8.4]{BK}, there is a commutative diagram
\begin{equation}\label{e:BK}
  \xymatrix@+1pc{
    {\mathcal C(G,J,\tau)} \ar[r]^{m_\tau} 
    & {\cH(G,J,\tau)\text{-mod}}\\
    {\mathcal C(M,J_M,\tau_M)  }\ar[u]^{\Ind_P^G} \ar[r]_{m_{\tau_M}}
    & {  \cH(M,J_M,\tau_M)\text{-mod}}\ar[u]_{\Ind_{\cH(M,J_M,\tau_M)}^{\cH(G,J,\tau)} }}
\end{equation}

Let $K$ be a maximal compact open subgroup containing $J$. If $\mu$ is any irreducible finite-dimensional $K$-representation, set
$\mu^\tau=\Hom_J(\tau,\mu),$ 
a module for the subalgebra $\cH(K,J,\tau)$ of functions is $\cH(G,J,\tau)$ supported in $K$. By \cite[Lemma 4.2]{Re2}
\begin{equation}
    \langle \mu,V^{U_K}\rangle_K=\langle \mu^\tau,m_\tau(V)\rangle_{\cH(K,J,\tau)},
\end{equation}
where $\langle ~,~\rangle$ denotes the appropriate multiplicity. Suppose $V=i_P^G(V_0)$, where $V_0$ is an admissible $M$-representation. Combining with (\ref{e:BK}), we get
\[
 \langle \mu,V^{U_K}\rangle_K=\langle \mu^\tau,\Ind_{\cH(M,J_M,\tau_M)}^{\cH(G,J,\tau)}m_{\tau_M}(V_0)\rangle_{\cH(K,J,\tau)}.
\]
The structure of the algebra $\cH(G,J,\tau)$ is known from the works of Morris and Lusztig. The important point for us is that there exists an (extended) affine Weyl group $\wti W=W\ltimes X$, $W$ a finite Weyl group, $X$ a $\bZ$-lattice, with a set of simple affine reflections $\wti S$ and a length function $\ell$ such that a $\bC$-basis of $\cH(G,J,\tau)$ is given by $\{T_w:w\in \wti W\}$. The relations between the generators are
\begin{align*}
T_w\cdot T_{w'}=T_{ww'},\text{ if }\ell(ww')=\ell(w)+\ell(w'),\\
(T_s+1)(T_s-q^{c(s)})=0,\ s\in \wti S,
\end{align*}
for some positive integers $c(s)$. The subalgebra $\cH(K,J,\tau)$ is the finite Hecke algebra generated by a finite parahoric subgroup $\wti W_K$ of $\wti W$. Let $W_K\subseteq W$ be the image of $\wti W_K$ under the projection map $\wti W\twoheadrightarrow W$.

The algebra $\cH(M,J_M,\tau_M)$ corresponds to the parabolic affine Hecke algebra generated by $\wti W_M=W_M\ltimes X$ for a parabolic subgroup $W_M$ of $W$.

Let $\cH(\wti W,c,v)$ denote the affine Hecke algebra with the same generators and relations, but where $q$ is replaced by $v^2$, where $v$ is an invertible indeterminate. This is a generic affine Hecke algebra in the sense of Lusztig. If $X_v$ is any $\cH(\wti W,c,v)$-module, write $X_{\sqrt q}=X_v\otimes_{\bC[v,v^{-1}]} \bC_{\sqrt q}$ for the specialization where $v$ acts by $\sqrt q$. Then $X_{\sqrt q}$ is a module for $\cH(G,J,\tau)$. Similarly, let $X_1$ be the specialization $v\to 1$, a module for $\bC[\wti W]$.

If $V$ is any irreducible tempered representation of $G$, $m_\tau(V)$ is an irreducible tempered $\cH(G,J,\tau)$-module or zero. By Lusztig's results, there exists a module $X_v$ for the generic affine Hecke algebra such that $m_\tau(V)=X_{\sqrt q}$. Denote, as in \cite[(6.1)]{Re2}
\[
m_\tau(V)_{q\to 1}=X_1, \text{ a }\wti W-module.
\]
As in \cite[\S 6.1]{Re2}, it follows that
\[\langle \mu^\tau,\Ind_{\cH(M,J_M,\tau_M)}^{\cH(G,J,\tau)}m_{\tau_M}(V_0)\rangle_{\cH(K,J,\tau)}=\langle (\mu^\tau)_{q\to 1}, \Ind_{\wti W_M}^{\wti W} (m_{\tau_M}(V_0))_{q\to 1})\rangle_{\wti W_K}.
\]
Finally, by the Mackey formula \cite[(6.2)]{Re2}
\[
\Ind_{\wti W_M}^{\wti W}(Y)|_{\wti W_K}=\bigoplus_{w\in W_K\backslash W/W_M} \Ind_{\wti W_M^w\cap \wti W_K}^{\wti W_K}(Y^w).
\]
Putting these formulae together, we find
\begin{equation}\label{e:mackey}
    \langle \mu,V^{U_K}\rangle_K=\bigoplus_{w\in W_K\backslash W/W_M} \langle (\mu^\tau)_{q\to 1},\Ind_{\wti W_M^w\cap \wti W_K}^{\wti W_K}(m_{\tau_M}(V_0))^w_{q\to 1})\rangle_{\wti W_K}.
\end{equation}

\section{Elliptic and compact pairs}\label{s:ellipticpairs}

\subsection{Definitions}\label{s:ell-pairs} Suppose $\Gamma$ is a (possibly disconnected) complex reductive group with identity component $\Gamma^\circ$. We define the sets (as in \cite[Def.~1.1]{Ciu}):
\begin{equation}\label{e:Y-gamma}
\begin{aligned}
\cY(\Gamma)&=\{(s,h)\in \Gamma\times\Gamma\mid s,h \text{ semisimple}, \ sh=hs\},\\
\cY(\Gamma)_\ellip&=\{(s,h)\in \Gamma\times\Gamma\mid s,h \text{ semisimple}, \ sh=hs, \ \rZ_{\Gamma}(s,h)\text{ is finite}\}.
\end{aligned}
\end{equation}
Here $\rZ_{\Gamma}(s,h)=\rZ_\Gamma(s)\cap \rZ_\Gamma(h)$.
We refer to elements of $\cY(\Gamma)_\ellip$ as {\it elliptic pairs.} 

We extend this definition in the following way.

\begin{defn}
Suppose $\Gamma'$ is a reductive subgroup of $\Gamma$ containing $\rZ_\Gamma$. We say that a pair $(s, h) \in \cY(\Gamma')$ is \textit{essentially $\Gamma$-elliptic} if $\rZ_{\Gamma'}(s, h)$ is finite modulo $\rZ_\Gamma$. Write $\cY(\Gamma')_{\Gamma-\ess}$ for the set of essentially $\Gamma$-elliptic pairs in $\Gamma'$. 

\end{defn}
Clearly every elliptic pair in $\cY(\Gamma')$ is essentially $\Gamma$-elliptic.

Now define the relations on $\cY(\Gamma)$:
\begin{equation}
\begin{aligned}
&(s,h)\sim_L (s,h') \text{ if }\gamma h\gamma^{-1}\in h'T\text{ for some }\gamma\in \rZ_\Gamma(s) \text{ and } T\text{ a maximal torus in }\rZ_\Gamma(s,h);\\
&(s,h)\sim_R (s',h) \text{ if }\gamma_1 s\gamma_1^{-1}\in s'T\text{ for some }\gamma_1\in \rZ_\Gamma(h) \text{ and } T\text{ as before.}
\end{aligned}
\end{equation}

\begin{lem}\label{l:ell-comp} Fix $s\in \Gamma$ semisimple. The projection map $\rZ_\Gamma(s)\to A_\Gamma(s)$, $h\mapsto \bar h$, induces:
\begin{enumerate}
\item a bijection between $\sim_L$-classes of pairs $(s,h)\in \cY(\Gamma)$ and conjugacy classes in $A_\Gamma(s)$;
\item a bijection between $\rZ_\Gamma(s)$-orbits of elliptic pairs $(s,h)$ and the elliptic conjugacy classes in $A_\Gamma(s)$.
\end{enumerate}
\end{lem}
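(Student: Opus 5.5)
Write $C=\rZ_\Gamma(s)$, a possibly disconnected reductive group with identity component $C^\circ$, so that $A_\Gamma(s)=C/C^\circ$. The pairs $(s,h)\in\cY(\Gamma)$ with first entry $s$ are exactly the semisimple $h\in C$. The map $h\mapsto \bar h$ from these to $A_\Gamma(s)$ is surjective: every connected component $hC^\circ$ of a reductive group contains semisimple elements, for instance the semisimple part of the Jordan decomposition, whose unipotent part lies in $C^\circ$. So the task is to identify the fibres of the composite map to conjugacy classes of $A_\Gamma(s)$ with $\sim_L$-classes, and then to handle ellipticity.

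\emph{Step 1: $\sim_L$ implies conjugate images.} If $\gamma h\gamma^{-1}\in h'T$ with $\gamma\in C$ and $T\subset \rZ_\Gamma(s,h)$ a maximal torus, I would first check that the torus $T$ (or its conjugate, depending on which pair it is attached to) lies in $C^\circ$. This holds because it is connected and centralizes $s$. Hence $\bar\gamma\bar h\bar\gamma^{-1}=\bar h'$.

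\emph{Step 2: conjugate images imply $\sim_L$.} Conversely, suppose $\bar h$ and $\bar h'$ are conjugate. After conjugating by some $\gamma\in C$ we may assume $h$ and $h'$ lie in the same component $hC^\circ$. I would then use the standard structure of semisimple elements in a component of a disconnected reductive group (Steinberg/Digne–Michel, \emph{quasi-semisimple} elements):
\begin{itemize}
\item every semisimple element of $hC^\circ$ is $C^\circ$-conjugate to an element $h_0t$, where $h_0$ is a fixed quasi-central element normalizing a pair $B\supset T_0$ of $C^\circ$ and $t\in T_0^{h_0,\circ}$;
\item $T_0^{h_0,\circ}$ is a maximal torus of $\rZ_{C}(h_0)^\circ=\rZ_\Gamma(s,h_0)^\circ$.
\end{itemize}
Applying this to both $h$ and $h'$ puts both, up to $C^\circ$-conjugacy, in $h_0T$ with $T=T_0^{h_0,\circ}$ a maximal torus of $\rZ_\Gamma(s,h_0)$. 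This shows that both pairs are $\sim_L$-equivalent to $(s,h_0)$. The remaining check is that $\sim_L$ is in fact an equivalence relation, or that the statement is read via the equivalence relation it generates, so that transitivity is available. This step is the main obstacle; the key input is exactly the fact that $h'T$ is the whole set of semisimple classes in the component up to conjugacy, together with a careful reading of which centralizer $T$ refers to.

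\emph{Step 3: ellipticity, part (2).} A pair $(s,h)$ is elliptic iff $\rZ_C(h)$ is finite, i.e.\ $\rZ_{C^\circ}(h)^\circ=1$. Recall that $\bar h$ is an elliptic class of $A_\Gamma(s)$ when some (equivalently, every) semisimple lift has finite centralizer in $C$; I take this as the definition used in \cite{ACR}. With that definition, the claimed bijection follows from two facts:
\begin{itemize}
\item if $h$ is elliptic, then $T=T_0^{h,\circ}$ is trivial, so the $\sim_L$-class of $(s,h)$ reduces to its $C$-orbit;
\item if $\bar h$ is elliptic, every lift has $T$ trivial, so all lifts are $C$-conjugate by Step 2.
\end{itemize}
Hence $C$-orbits of elliptic pairs correspond bijectively to elliptic classes in $A_\Gamma(s)$, using part (1).
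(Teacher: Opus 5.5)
Your proposal is correct and is essentially the paper's argument. The paper applies Sommers' result to $\cG=\rZ_\Gamma(s)$: for semisimple $x,y\in\cG$ with conjugate images in $\cG/\cG^\circ$ and any maximal torus $S$ of $\rZ_\cG(x)$, some $\cG$-conjugate of $y$ lies in $xS$. This amounts to your normal form with the quasi-central $h_0$ replaced by an arbitrary semisimple element of the component, e.g.\ $h'$ itself, so no transitivity through a base point is needed; in any case your Steps 1--2 already settle the statement for the equivalence relation generated by $\sim_L$, which is how ``$\sim_L$-classes'' must be read.

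For (2) the paper only cites \cite[Lemma 8.2]{ACR}, and your direct argument works. One caveat: \cite{ACR} appears to define an elliptic class of $A_\Gamma(s)$ by $\mathfrak s^{\bar h}=0$ on the abstract Cartan. Your ``definition'' is then really an equivalent characterization, which follows from Steinberg's theorem that $T_0^{h,\circ}$ is a maximal torus of $\rZ_{C^\circ}(h)^\circ$ whenever $h$ normalizes $(B,T_0)$.
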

\begin{proof}
We need a result from the theory of semisimple automorphisms of reductive groups, e.g. \cite[Proposition 9]{Som1}: if $x,y$ are semisimple elements in a reductive group $\cG$ such that their images in the group of components $\cG/\cG^\circ$ are in the same conjugacy class, and $S$ is a maximal torus in $\rZ_\cG(x)$, then there exist $g\in \cG$ and $s\in S$ such that $g y g^{-1}=xs$.
Apply this to $\cG:=\rZ_\Gamma(s)$ (a reductive group), then (1) follows immediately (this is in fact the motivation for the definition $\sim$).

Part (2) is \cite[Lemma 8.2]{ACR}.
\end{proof}

Let $\sim$ be the equivalence relation on $\cY(\Gamma)$ generated by $\sim_L$ and $\sim_R$. 
It is not hard to check that if $(s, h) \in \cY(\Gamma)$ and $\gamma \in \Gamma$, then $(s, h) \sim (\gamma s \gamma^{-1}, \gamma h \gamma^{-1})$, so $\sim$ induces an equivalence relation (also denoted by $\sim$) 
on $\Gamma\backslash \cY(\Gamma)$. The subset $\cY(\Gamma)_\ellip$ of $\cY(\Gamma)$ is closed under $\sim$-equivalence and notice that
\[ \cY(\Gamma)_\ellip/_\sim=\Gamma\backslash \cY(\Gamma)_\ellip.
\]

Now we consider the relation $\sim$ on essentially $\Gamma$-elliptic pairs as defined above.

\begin{lem}\label{l:equiv-ess}
Let $\Gamma'$ be a reductive subgroup of $\Gamma$ containing $\rZ_\Gamma$. 
The subset $\cY(\Gamma')_{\Gamma-{\ess}}$ of $\cY(\Gamma')$ is closed under $\sim$-equivalence, and 
\begin{equation*}
 \cY(\Gamma')_{\Gamma-\ess}/_\sim \;= \;\Gamma'\backslash \cY(\Gamma')_{\Gamma-\ess}/(\rZ_\Gamma^\circ \times \rZ_\Gamma^\circ),
\end{equation*}
where $\Gamma'$ is acting by conjugation and $\rZ_\Gamma^\circ \times \rZ_\Gamma^\circ$ is acting by multiplication.
\end{lem}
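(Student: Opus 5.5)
The argument rests on one structural observation. If $(s,h)$ is essentially $\Gamma$-elliptic in $\Gamma'$, then $\rZ_{\Gamma'}(s,h)^\circ$ is a torus contained in $\rZ_\Gamma$, hence contained in $\rZ_\Gamma^\circ$. Conversely, $\rZ_\Gamma^\circ\subset \rZ_{\Gamma'}(s,h)$ always holds, because $\rZ_\Gamma\subset\Gamma'$ and central elements commute with $s$ and $h$. So for an essentially elliptic pair the maximal torus $T$ of $\rZ_{\Gamma'}(s,h)$ appearing in the definitions of $\sim_L$ and $\sim_R$ is exactly $\rZ_\Gamma^\circ$.

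\textbf{Step 1 (closure under $\sim_L$).} Let $(s,h)\in\cY(\Gamma')_{\Gamma-\ess}$ and $(s,h)\sim_L(s,h')$. Then $\gamma h\gamma^{-1}=h'z$ with $\gamma\in \rZ_{\Gamma'}(s)$ and $z\in T=\rZ_\Gamma^\circ$. Since $z$ is central, we get
\[
\rZ_{\Gamma'}(s,h')=\rZ_{\Gamma'}(s,h'z)=\rZ_{\Gamma'}(s,\gamma h\gamma^{-1})=\gamma\,\rZ_{\Gamma'}(s,h)\,\gamma^{-1},
\]
using $\gamma s\gamma^{-1}=s$ for the last equality. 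Hence $\rZ_{\Gamma'}(s,h')$ is finite modulo $\rZ_\Gamma$. Finiteness modulo $\rZ_\Gamma$ is preserved by conjugation because $\rZ_\Gamma$ is normal; concretely, it means the identity component lies in $\rZ_\Gamma$, and this is conjugation invariant. The case of $\sim_R$ is symmetric. Conjugation by $\Gamma'$ and multiplication by $\rZ_\Gamma^\circ\times\rZ_\Gamma^\circ$ also preserve the essential ellipticity condition, by the same centrality argument. This proves closure.

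\textbf{Step 2 (identifying the classes).} The two descriptions of the classes agree because each generator of one relation lies in the other.

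First, each generator of $\sim$ lies in the orbit relation. By Step 1 and the structural observation, $(s,h)\sim_L(s,h')$ means $(s,h')=(\gamma s\gamma^{-1},\gamma h\gamma^{-1}z^{-1})$. This is in the orbit of $(s,h)$ under $\Gamma'\times\rZ_\Gamma^\circ\times\rZ_\Gamma^\circ$. The same holds for $\sim_R$.

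Conversely, conjugation by $\Gamma'$ is contained in $\sim$, by the remark preceding the lemma applied to $\Gamma'$. Multiplication $(s,h)\mapsto(s,hz)$ with $z\in\rZ_\Gamma^\circ$ is a $\sim_L$-move: take $\gamma=1$ and note $h\in (hz)\rZ_\Gamma^\circ$, where $\rZ_\Gamma^\circ$ is a maximal torus of $\rZ_{\Gamma'}(s,h)$. Multiplication on the first factor is similarly a $\sim_R$-move. Therefore the equivalence relation generated by $\sim_L$ and $\sim_R$ coincides on this set with the orbit relation of the group generated by $\Gamma'$ and $\rZ_\Gamma^\circ\times\rZ_\Gamma^\circ$. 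Since the two actions commute, the orbits are exactly the double quotient $\Gamma'\backslash \cY(\Gamma')_{\Gamma-\ess}/(\rZ_\Gamma^\circ\times\rZ_\Gamma^\circ)$.

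\textbf{Main obstacle.} The delicate point is the claim that a maximal torus of $\rZ_{\Gamma'}(s,h)$ is exactly $\rZ_\Gamma^\circ$. This needs three facts:
\begin{itemize}
\item the identity component of a group finite modulo $\rZ_\Gamma$ lies in $\rZ_\Gamma^\circ$ (a connected subgroup of $\rZ_\Gamma$);
\item $\rZ_{\Gamma'}(s,h)^\circ$ is reductive with finite center modulo that torus, so a maximal torus cannot be larger;
\item $\rZ_\Gamma^\circ$ is a torus, which holds since $\Gamma$ is reductive.
\end{itemize}
I would check these carefully, in particular that $\rZ_{\Gamma'}(s,h)^\circ$ is itself the torus $\rZ_\Gamma^\circ$, which follows since it is connected, contains $\rZ_\Gamma^\circ$, and has finite image in the quotient by $\rZ_\Gamma$.
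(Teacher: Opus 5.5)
Your proposal is correct and is essentially the paper's proof. You identify the maximal torus of $\rZ_{\Gamma'}(s,h)$ with $\rZ_\Gamma^\circ$, obtain closure from $\rZ_{\Gamma'}(s,h')=\gamma\,\rZ_{\Gamma'}(s,h)\,\gamma^{-1}$, and match $\sim_L$/$\sim_R$ moves with conjugation by $\Gamma'$ and multiplication by $\rZ_\Gamma^\circ\times\rZ_\Gamma^\circ$ in both directions, exactly as the paper does; one small omission you share with the paper is that, since $\sim$ is the equivalence relation \emph{generated} by $\sim_L$ and $\sim_R$, closure also needs the case where the essentially elliptic pair is the target of a move, which follows because essential ellipticity of $(s,h)$ means $\rZ_{\Gamma'}(s,h)$ has rank $\dim\rZ_\Gamma^\circ$, and by the result of Sommers quoted in the proof of Lemma~\ref{l:ell-comp} this rank depends only on the conjugacy class of the image of $h$ in the component group of $\rZ_{\Gamma'}(s)$ (and symmetrically for $\sim_R$).
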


\begin{proof}
Suppose $(s, h_1) \in \cY(\Gamma')_{\Gamma-\ess}, (s, h_2) \in \cY(\Gamma')$, and $(s, h_1) \sim_L (s, h_2)$. Then $\gamma h_1 \gamma^{-1} \in h_2T$ for some $\gamma \in \rZ_{\Gamma'}(s)$ and some maximal torus $T$ in $\rZ_{\Gamma'}(s, h_1)$. By assumption $T = \rZ_\Gamma^\circ$, so $h_2 = \gamma h_1z\gamma^{-1}$ for some $z \in \rZ_\Gamma^\circ$. Since $\gamma$ centralizes $s$, we have $\rZ_{\Gamma'}(s, h_2) = \gamma\rZ_{\Gamma'}(s, h_1)\gamma^{-1}$. Thus $(s, h_2) \in \cY(\Gamma')_{\Gamma-\ess}$. Similarly if $(s_1, h) \sim_R (s_2, h)$ and $(s_1, h) \in \cY(\Gamma')_{\Gamma-\ess}$, then $(s_2, h) \in \cY(\Gamma')_{\Gamma-\ess}$. Thus $\cY(\Gamma')_{\Gamma-\ess}$ is closed under $\sim$-equivalence.

We already saw that $\Gamma'$-conjugate elements of $\cY(\Gamma')$ are $\sim$-equivalent. Next let $(s, h) \in \cY(\Gamma')_{\Gamma-\ess}$ and $z_1, z_2 \in \rZ_\Gamma^\circ$. We claim that $(s, h) \sim (sz_1, hz_2)$. We have that $(s, h) \sim_L (s, hz_2)$ since $\rZ_\Gamma^\circ$ is a maximal torus in $\rZ_{\Gamma'}(s, h)$. Similarly $(s, hz_2) \sim_R (sz_1, hz_2)$.

To finish the proof we must show that if two elements of $\cY(\Gamma')_{\Gamma-\ess}$ are $\sim$-equivalent, then they are equal in the set $\Gamma'\backslash \cY(\Gamma')_{\Gamma-\ess}/(\rZ_\Gamma^\circ \times \rZ_\Gamma^\circ)$. So let $(s, h_1) \in \cY(\Gamma')_{\Gamma-\ess}, (s, h_2) \in \cY(\Gamma')$, and suppose $(s, h_1) \sim_L (s, h_2)$. By the same logic as above, this tells us that there exists $\gamma \in \rZ_{\Gamma'}(s)$ and $z \in \rZ_{\Gamma}^\circ$ such that $\gamma h_1 \gamma^{-1} = h_2z$. Then in the set $\Gamma'\backslash \cY(\Gamma')_{\Gamma-\ess}/(\rZ_\Gamma^\circ \times \rZ_\Gamma^\circ)$, we have $(s, h_1) = (\gamma s \gamma^{-1}, \gamma h_1 \gamma^{-1}) = (s, h_2z) = (s, h_2)$. A similar argument for $\sim_R$ finishes the proof. \end{proof}

\begin{rem}
In every $\sim$-equivalence class of $\cY(\Gamma)$, we may choose (not uniquely) a representative $(s,h)$ such that both $s,h$ have finite order by \cite[Lemma 1.15]{DM2}. We call these representatives \emph{compact pairs}.
\end{rem}

\subsection{Relating compact and essentially elliptic pairs}

Let $G^\vee$ be a connected complex reductive group, and let $G^\vee_{\un}$ be the set of unipotent elements in $G^\vee$. We now relate compact pairs for $G^\vee$ to essentially elliptic pairs in Levi subgroups of $G^\vee$.

We describe two sets with $G^\vee$-actions. First, let $u \in G^\vee$ be unipotent. Let $\Gamma_u = \rZ_{G^\vee}(u)^{\red}$. Consider the set 
$\cup_{u \in G^\vee_{\un}} \cY(\Gamma_u)$.
Note that $G^\vee$ acts on this set by conjugation: $g \cdot (u, s, h) = (gug^{-1}, gsg^{-1}, ghg^{-1})$. 
It's easy to check that the action of $G^\vee$ by conjugation induces an action on the set 
\begin{equation*}
\cpt(G^\vee) := \bigcup_{u \in G^\vee_{\un}} \cY(\Gamma_u)/_\sim
\end{equation*}
 of $\sim$-equivalence classes.

Let $\cL^\vee$ be the set of Levi subgroups of $G^\vee$. Next we consider the set
\begin{equation*}
\ess(G^\vee) := \bigcup_{M^\vee \in \mathcal{L}^\vee} \bigcup_{u \in M^\vee_{\un}} \cY(\Gamma_u^{M^\vee})_{M^\vee-\ess} /(\rZ_{M^\vee}^\circ \times \rZ_{M^\vee}^\circ)
 \end{equation*}
where $\Gamma_u^{M^\vee} = \Gamma_u \cap M^\vee$. 
Clearly $G^\vee$ acts on the set of tuples $(M^\vee, u, s, h) \in \cup_{M^\vee} \cup_u \cY(\Gamma_u^{M^\vee})_{M^\vee-\ess}$ 
by conjugation. It's also not hard to check that this induces a well-defined action on $\ess(G^\vee)$.

Now we define a map between the two sets just described. Let $u \in G^\vee$ be unipotent, and let $(s, h) \in \cY(\Gamma_u)$. Given this triple $(u, s, h)$, choose a maximal torus $S \subset (\rZ_{\Gamma_u}(s, h))^\circ$. Then $M^\vee := \rZ_{G^\vee}(S)$ is a Levi subgroup of $G^\vee$. Note that $u \in M^\vee$ (since $S \subset \Gamma_u$), and $\rZ_{M^\vee}(u)^{\red} = \Gamma_u \cap M^\vee$. Thus $s, h \in \Gamma_u^{M^\vee} := \rZ_{M^\vee}(u)^{\red}$. We claim that $(s, h)$ is an essentially $M^\vee$-elliptic pair in $\Gamma_u^{M^\vee}$. This follows from the fact that if $S'$ is a torus in $\rZ_{\Gamma_u^{M^\vee}}(s, h)$, then $S' \subset S \subset \rZ_{M^\vee}^\circ$, so $(s, h)$ is elliptic modulo $\rZ_{M^\vee}^\circ$. We have thus described a map
\begin{equation}\label{e:kappa}
\kappa:\bigcup_{u \in G^\vee_{\un}} \cY(\Gamma_u) \longrightarrow \ess(G^\vee)/G^\vee, \quad 
   (u,s,h)\mapsto (M^\vee,u,s,h),
\end{equation}
where we are taking $G^\vee$-orbits on $\ess(G^\vee)$.

Note that given a triple $(u, s, h)$, the image $\kappa((u, s, h))$ does not depend on choice of maximal torus $S \subset (\rZ_{\Gamma_u}(s, h))^\circ$: if $S' \subset (\rZ_{\Gamma_u}(s, h))^\circ$ is another maximal torus, then there exists $z \in \rZ_{\Gamma_u}(s, h)^\circ$ such that $S' = zSz^{-1}$. The corresponding Levi $\rZ_{G^\vee}(S'$) is equal to $z\rZ_{G^\vee}(S)z^{-1}$, and so the image in $\ess(G)/G^\vee$ is independent of $S$. 

\begin{lem}
The map $\kappa$ gives a well-defined function on $\cpt(G^\vee)$.
\end{lem}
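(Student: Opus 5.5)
We need to show that if $(u,s,h)$ and $(u',s',h')$ represent the same class in $\cpt(G^\vee)$, then $\kappa(u,s,h)=\kappa(u',s',h')$ in $\ess(G^\vee)/G^\vee$. Since $\sim$ on $\cY(\Gamma_u)$ is generated by $\sim_L$ and $\sim_R$, and the two are symmetric under swapping $s$ and $h$, it suffices to treat a single $\sim_L$ step. We also need that $\kappa$ is constant on $G^\vee$-orbits, which is immediate from the construction: conjugating $(u,s,h)$ by $g$ conjugates the torus $S$, the Levi $M^\vee$, and the whole tuple $(M^\vee,u,s,h)$.

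So suppose $(s,h)\sim_L(s,h')$ in $\cY(\Gamma_u)$. Then there exist $\gamma\in \rZ_{\Gamma_u}(s)$ and a maximal torus $T$ of $\rZ_{\Gamma_u}(s,h)$ with $\gamma h\gamma^{-1}=h't$ for some $t\in T$. Conjugating by $\gamma$ fixes $u$ and $s$ and does not change the $\kappa$-image, so we may replace $h$ by $\gamma h\gamma^{-1}$ and $T$ by $\gamma T\gamma^{-1}$. This reduces us to the case $h=h't$ with $t\in T$, where $T$ is a maximal torus of $\rZ_{\Gamma_u}(s,h)$ and hence of $\rZ_{\Gamma_u}(s,h)^\circ$.

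Take $S=T$ in the construction of $\kappa(u,s,h)$, so that $M^\vee=\rZ_{G^\vee}(T)$. We now show that $T$ is also a maximal torus of $\rZ_{\Gamma_u}(s,h')^\circ$.

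First, $T$ lies in $\rZ_{\Gamma_u}(s,h')$: $T$ commutes with $s$ and with $h$, it commutes with $t$ because $T$ is abelian, and therefore it commutes with $h'=ht^{-1}$.

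Second, $T$ is maximal there. Let $T'\supseteq T$ be a torus in $\rZ_{\Gamma_u}(s,h')$. Then $T'$ centralizes $T\ni t$, so it centralizes $h=h't$, and hence $T'\subseteq\rZ_{\Gamma_u}(s,h)$. Maximality of $T$ in that group gives $T'=T$. This symmetric swap between $h$ and $h'$ is the only real content of the argument, and it is where the hypothesis $t\in T$ is used essentially.

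Consequently both triples $(u,s,h)$ and $(u,s,h')$ yield the same Levi subgroup $M^\vee=\rZ_{G^\vee}(T)$. Moreover, $T$ is central in $M^\vee$, and $T\subseteq \rZ_{M^\vee}^\circ$ since $T$ is connected. Hence $(s,h)$ and $(s,h')=(s,ht^{-1})$ differ by the action of $1\times t^{-1}\in \rZ_{M^\vee}^\circ\times\rZ_{M^\vee}^\circ$. They therefore define the same element of $\cY(\Gamma^{M^\vee}_u)_{M^\vee-\ess}/(\rZ_{M^\vee}^\circ\times\rZ_{M^\vee}^\circ)$, and so the same point of $\ess(G^\vee)/G^\vee$.

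The $\sim_R$ case is identical with the roles of $s$ and $h$ exchanged: use $\gamma_1\in \rZ_{\Gamma_u}(h)$ and a translation in the first factor of $\rZ_{M^\vee}^\circ\times\rZ_{M^\vee}^\circ$. Since the $\kappa$-image is independent of the choice of $S$, as shown just before the statement, it is preserved by each generating step. This proves that $\kappa$ descends to $\cpt(G^\vee)$.
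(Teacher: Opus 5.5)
Your argument is the paper's proof. You conjugate by $\gamma$ to reach $h=h't$, check that the maximal torus $T$ lies in $\rZ_{\Gamma_u}(s,h')$ and is maximal there, conclude that both triples give $M^\vee=\rZ_{G^\vee}(T)$, and note that the pairs differ by $t\in T\subseteq\rZ_{M^\vee}^\circ$. There is one small bookkeeping slip, which the paper also glosses over: after replacing $h$ by $\gamma h\gamma^{-1}$ you cannot also replace $T$ by $\gamma T\gamma^{-1}$ and keep $t\in T$. The step is clean if $T$ is taken to be a maximal torus of $\rZ_{\Gamma_u}(s,h')$, which is what Sommers' result quoted in the proof of Lemma~\ref{l:ell-comp} supplies; then conjugating $h$ leaves $T$ untouched, and your argument with the roles of $h$ and $h'$ swapped goes through verbatim.
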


\begin{proof}
We must show that $\kappa$ is well defined on $\sim$-equivalence classes. Fix $u \in G^\vee_{\un}$, let $(s, h_1), (s, h_2) \in \cY(\Gamma_u)$, and suppose $(s, h_1) \sim_L (s, h_2)$. Let $H = \rZ_{\Gamma_u}(s)$. 
By definition of $\sim_L$ there exists $\gamma \in H$ and a maximal torus $S_1 \subset \rZ_H(h_1)$ such that $\gamma h_1 \gamma^{-1} \in h_2 S_1$. The image of $(s, h_1)$ under the map described above is thus $(M^\vee_1 := \rZ_{G^\vee}(S_1), u, s, h_1)$.
Since the image of $(s, h_1)$ is well defined up to conjugation by $\Gamma_u$ and $\gamma$ centralizes $s$, we may replace $(s, h_1)$ with $(s, \gamma h_1 \gamma^{-1})$, and so we may assume that $h_1 \in h_2S_1$. 

It is then easy to check that $S_1 \subset \rZ_H(h_2)$, and so we may choose a maximal torus $S_2 \subset \rZ_{H}(h_2)$ containing $S_1$. Again using the fact that $h_1 \in h_2S_1$, one can check that $S_2 \subset \rZ_H(h_1)$. By maximality of $S_1$ we then have $S_2 \subset S_1$, so in fact $S_1 = S_2$. The image of $(s, h_2)$ is thus $(M_1^\vee, u, s, h_2)$. But $S_1 \subset \rZ_{M^\vee_1}^\circ$, so $h_1 = h_2 \mod \rZ_{M^\vee_1}^\circ$, which shows that the pairs $(s, h_1)$ and $(s, h_2)$ are equal in $\cY(\Gamma_u^{M^\vee})_{M^\vee-\ess}/(\rZ_{M^\vee}^\circ \times \rZ_{M^\vee}^\circ)$. Thus $\kappa$ is well defined on $\sim_L$-equivalence classes. The proof that $\kappa$ is well defined on $\sim_R$-equivalence classes is the same.
\end{proof}

\begin{lem}\label{l:levi-center}
Let $u \in G^\vee, (s, h) \in \cY(\Gamma_u), S \subset \rZ_{\Gamma_u}(s, h)$, and $M^\vee = \rZ_{G^\vee}(S)$ be as defined above. Then $Z_{M^\vee}^\circ = S$.
\end{lem}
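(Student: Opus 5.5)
We prove the two inclusions $S \subseteq \rZ_{M^\vee}^\circ$ and $\rZ_{M^\vee}^\circ \subseteq S$ separately.

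\emph{First inclusion.} By construction $M^\vee = \rZ_{G^\vee}(S)$, so $S$ centralizes $M^\vee$ and lies in it. Hence $S \subseteq \rZ_{M^\vee}$. Since $S$ is a torus, it is connected, and therefore $S \subseteq \rZ_{M^\vee}^\circ$.

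\emph{Second inclusion.} Put $Z := \rZ_{M^\vee}^\circ$, a torus in $G^\vee$. We will show that $Z$ lies in $\rZ_{\Gamma_u}(s,h)^\circ$ and commutes with $S$; maximality of $S$ then forces $Z \subseteq S$.

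\begin{enumerate}
\item[(a)] $Z$ commutes with $u$, $s$ and $h$. Indeed, $u \in M^\vee$ (as noted before the statement), and $s, h \in \Gamma_u^{M^\vee} \subseteq M^\vee$. Since $Z$ is central in $M^\vee$, it commutes with all three.
\item[(b)] $Z \subseteq \Gamma_u$. By (a), $Z \subseteq \rZ_{G^\vee}(u)$. Here $\Gamma_u$ is the reductive part of $\rZ_{G^\vee}(u)$, and the earlier identity $\rZ_{M^\vee}(u)^{\red} = \Gamma_u \cap M^\vee$ is understood for compatible choices of Levi decompositions. The torus $Z$ is central in $\rZ_{M^\vee}(u)$, so it lies in every maximal reductive subgroup of $\rZ_{M^\vee}(u)$, in particular in $\rZ_{M^\vee}(u)^{\red} = \Gamma_u \cap M^\vee$. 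Hence $Z \subseteq \Gamma_u$.
\item[(c)] $Z \subseteq \rZ_{\Gamma_u}(s,h)^\circ$. By (a) and (b), $Z \subseteq \rZ_{\Gamma_u}(s,h)$, and since $Z$ is connected it lies in the identity component.
\item[(d)] Conclusion. $Z$ and $S$ are commuting tori (as $S \subseteq M^\vee$ and $Z$ is central in $M^\vee$). So $SZ$ is a torus in $\rZ_{\Gamma_u}(s,h)^\circ$ containing $S$. Since $S$ is a maximal torus there, $SZ = S$, giving $Z \subseteq S$.
\end{enumerate}

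The only delicate point is (b): a central torus of $M^\vee$ centralizing $u$ must lie in the chosen reductive part $\Gamma_u$ rather than merely in $\rZ_{G^\vee}(u)$. This rests on the convention, already used before the statement, that the reductive parts are chosen compatibly so that $\Gamma_u \cap M^\vee$ is a Levi factor of $\rZ_{M^\vee}(u)$. Given that convention, the argument reduces to the fact that a central torus lies in every Levi factor.
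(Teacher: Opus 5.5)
Your proof is correct and follows the paper's argument. You show $S\subseteq \rZ_{M^\vee}^\circ$ by construction, then $\rZ_{M^\vee}^\circ\subseteq \rZ_{\Gamma_u}(s,h)$, and conclude by maximality of $S$; the paper phrases the last step by choosing a maximal torus $S'\supseteq \rZ_{M^\vee}^\circ$, you by enlarging $S$ to $S\rZ_{M^\vee}^\circ$. Your step (b) also usefully spells out why $\rZ_{M^\vee}^\circ\subseteq \Gamma_u$, namely that a central torus of $\rZ_{M^\vee}(u)$ lies in every Levi factor, in particular in $\Gamma_u\cap M^\vee$, a point the paper passes over silently.
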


\begin{proof}
By definition of $M^\vee$ we have $S \subset \rZ_{M^\vee}^\circ$. Since $u, s, h \in M^\vee$, we have $\rZ_{M^\vee}^\circ \subset \rZ_{\Gamma_u}(s, h)$, so we may choose a maximal torus $S'$ of $\rZ_{\Gamma_u}(s, h)$ such that $\rZ_{M^\vee}^\circ \subset S'$. By the maximality of $S$, we must have $S = S' = \rZ_{M^\vee}^\circ$.
\end{proof}

\begin{prop}\label{prop-map-on-sets}
The map
$\cpt(G^\vee)/G^\vee \longrightarrow \ess(G^\vee)/G^\vee$
on $G^\vee$-orbits induced by $\kappa$, as defined in (\ref{e:kappa}), is a bijection.
\end{prop}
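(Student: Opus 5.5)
To prove that the map induced by $\kappa$ is a bijection, I will construct an explicit inverse. Given a class represented by $(M^\vee,u,s,h)$ with $u\in M^\vee_\un$ and $(s,h)\in\cY(\Gamma_u^{M^\vee})_{M^\vee-\ess}$, I will send it to the $G^\vee$-orbit of the $\sim$-class of $(u,s,h)$, regarded as a triple with $(s,h)\in\cY(\Gamma_u)$. Here $\Gamma_u^{M^\vee}=\Gamma_u\cap M^\vee\subset\Gamma_u$, and commuting semisimple elements of $\Gamma_u^{M^\vee}$ are still commuting semisimple elements of $\Gamma_u$.

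The first step is to check that this map $\lambda$ is well defined.

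\begin{itemize}
\item \emph{Conjugation by $G^\vee$.} Conjugating $(M^\vee,u,s,h)$ by $g\in G^\vee$ conjugates the triple, so it does not change the $G^\vee$-orbit.
\item \emph{Multiplication by $\rZ_{M^\vee}^\circ\times\rZ_{M^\vee}^\circ$.} I must show $(s,h)\sim(sz_1,hz_2)$ in $\cY(\Gamma_u)$ for $z_1,z_2\in \rZ_{M^\vee}^\circ$. Note that $\rZ_{M^\vee}^\circ$ is a torus centralizing $u,s,h$, so it lies in $\rZ_{\Gamma_u}(s,h)^\circ$. Choose a maximal torus $T$ of $\rZ_{\Gamma_u}(s,h)$ containing it. Then $(s,h)\sim_L(s,hz_2)$ with $\gamma=1$.
\item Since $hz_2\in hT$ with $T$ abelian, $T$ also lies in $\rZ_{\Gamma_u}(s,hz_2)$. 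It is maximal there: a torus $T'\supseteq T$ in $\rZ(s,hz_2)$ centralizes $z_2\in T$, hence centralizes $h$. Therefore $(s,hz_2)\sim_R(sz_1,hz_2)$.
\end{itemize}

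The second step is to show $\lambda\circ\kappa=\mathrm{id}$. This is immediate, since $\kappa$ does not change the triple $(u,s,h)$.

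The third step, and the main one, is to show $\kappa\circ\lambda=\mathrm{id}$. Start from $(M^\vee,u,s,h)$ with $(s,h)$ essentially $M^\vee$-elliptic. Choose a maximal torus $S\subset\rZ_{\Gamma_u}(s,h)^\circ$ containing $\rZ_{M^\vee}^\circ$, and set $L^\vee=\rZ_{G^\vee}(S)$; then $\kappa$ sends the triple to $(L^\vee,u,s,h)$. We have $L^\vee\subseteq \rZ_{G^\vee}(\rZ_{M^\vee}^\circ)=M^\vee$, because $M^\vee$ is a Levi subgroup. Hence $S\subset \rZ_{\Gamma_u}(s,h)\cap M^\vee=\rZ_{\Gamma_u^{M^\vee}}(s,h)$. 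By essential ellipticity this group is finite modulo $\rZ_{M^\vee}$, so its identity component is $\rZ_{M^\vee}^\circ$, and therefore $S\subseteq \rZ_{M^\vee}^\circ$. Thus $S=\rZ_{M^\vee}^\circ$, which gives $L^\vee=\rZ_{G^\vee}(\rZ_{M^\vee}^\circ)=M^\vee$. So $\kappa$ returns exactly $(M^\vee,u,s,h)$.

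The main obstacle is the centralizer identity $\rZ_{G^\vee}(\rZ_{M^\vee}^\circ)=M^\vee$ for a Levi subgroup $M^\vee$. This is standard: $M^\vee$ is the centralizer of some torus $S_0\subseteq\rZ_{M^\vee}^\circ$, and one has $M^\vee\subseteq \rZ_{G^\vee}(\rZ_{M^\vee}^\circ)\subseteq \rZ_{G^\vee}(S_0)=M^\vee$. Lemma \ref{l:levi-center} already supplies the analogous statement in the forward direction. Together, the second and third steps show that $\lambda$ is a two-sided inverse, so the map induced by $\kappa$ is a bijection.
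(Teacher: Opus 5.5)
Your proof is correct and is essentially the paper's argument. The check $\kappa\circ\lambda=\mathrm{id}$ is the paper's surjectivity step: choose $S\supseteq \rZ_{M^\vee}^\circ$, then use essential ellipticity to force $S=\rZ_{M^\vee}^\circ$ and hence $\rZ_{G^\vee}(S)=M^\vee$. The well-definedness of $\lambda$ under $\rZ_{M^\vee}^\circ\times\rZ_{M^\vee}^\circ$ is what the paper uses for injectivity via Lemma~\ref{l:equiv-ess}, except that you verify $(s,h)\sim(sz_1,hz_2)$ directly in $\cY(\Gamma_u)$, including maximality of $T$ in $\rZ_{\Gamma_u}(s,hz_2)$, rather than in $\cY(\Gamma_u^{M^\vee})$ with an implicit transfer.
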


\begin{proof}
 It is easy to see that $\kappa$ is well defined on $G^\vee$-orbits.

To show that $\kappa$ is surjective, let $M^\vee \in \cL^\vee$, $u \in M^\vee_{\un}$, and $(s, h) \in \cY(\Gamma_u^{M^\vee})_{M^\vee-\ess}$. We claim that $(M^\vee, u, s, h)$ is the image of $(u, s, h)$. To see this, note that $u, s, h \in M^\vee$, so $\rZ_{M^\vee}^\circ \subset \rZ_{\Gamma_u}(s, h)$. Thus we can choose a maximal torus $S \subset \rZ_{\Gamma_u}(s, h)$ containing $\rZ_{M^\vee}^\circ$. Let $(M')^\vee = \rZ_{G^\vee}(S)$
 It suffices to show that $M^\vee = (M')^\vee$. Since $S = \rZ_{(M')^\vee}^\circ$ by Lemma \ref{l:levi-center}, we have $\rZ_{M^\vee}^\circ \subset \rZ_{(M')^\vee}^\circ$ and $(M')^\vee \subset M^\vee$. This tells us that $\rZ_{\Gamma_u}(s, h) \cap (M')^\vee \subset \rZ_{\Gamma_u}(s, h) \cap M^\vee$, so the maximal torus $\rZ_{(M')^\vee}^\circ$ of $\rZ_{\Gamma_u}(s, h) \cap (M')^\vee$ is contained in a maximal torus of $\rZ_{\Gamma_u}(s, h) \cap M^\vee$. Since $(s, h)$ is essentially elliptic in ${\Gamma_u^{M^\vee}}$, we know that $\rZ_{M^\vee}^\circ$ is a maximal torus of $\rZ_{\Gamma_u}(s, h) \cap M^\vee$. So we must have the containment $\rZ_{(M')^\vee}^\circ \subset \rZ_{M^\vee}^\circ$, which gives $M^\vee = (M')^\vee$.

To show that $\kappa$ is injective, suppose $(u, s, h)$ and $(u', s', h')$ have the same image. If we choose maximal tori $S \subset \rZ_{\Gamma_u}(s, h)$ and $S' \subset \rZ_{\Gamma_{u'}}(s', h')$, this tells us that the centralizers $\rZ_{G^\vee}(S)$ and $\rZ_{G^\vee}(S')$ are $G^\vee$-conjugate, so after conjugating we may assume they are equal; denote this Levi $M^\vee$. Now by assumption $u$ and $u'$ are conjugate in $N_{G^\vee}(M^\vee)$, so after conjugating by $N_{G^\vee}(M^\vee)$, we may assume $u = u'$ and that $(s, h) = (s', h')$ in $\cY(\Gamma_u^{M^\vee})_{M^\vee-\ess}/(\rZ_{M^\vee}^\circ \times \rZ_{M^\vee}^\circ)$,
Then $(s, h) \sim (s', h')$ as elements of $\cY(\Gamma_u)$, by Lemma \ref{l:equiv-ess}.
\end{proof}

\begin{lem}
Suppose $(s, h) \in \cY(\Gamma_u)$ is conjugate to $(s', h') \in \cY(\Gamma_u)$ under the action of $\rZ_{G^\vee}(u) \cap N_{G^\vee}(\Gamma_u)$. Then $(s, h)$ is conjugate to $(s', h')$ under the action of $\Gamma_u$.
\end{lem}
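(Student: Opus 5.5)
The plan is to use the Levi decomposition of the centralizer $\rZ_{G^\vee}(u)$ and show that the unipotent part of any element normalizing $\Gamma_u$ must in fact centralize $\Gamma_u$, so it acts trivially on pairs $(s,h)\in\cY(\Gamma_u)$.

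First, recall that $\rZ_{G^\vee}(u)$ is a (possibly disconnected) linear algebraic group over $\bC$. It has a Levi decomposition $\rZ_{G^\vee}(u)=\Gamma_u\ltimes U_u$. Here $U_u$ is the unipotent radical of $\rZ_{G^\vee}(u)$, a normal subgroup, and $\Gamma_u=\rZ_{G^\vee}(u)^{\red}$ is a reductive complement, with $\Gamma_u\cap U_u=\{1\}$. Let $g\in \rZ_{G^\vee}(u)\cap N_{G^\vee}(\Gamma_u)$ satisfy $g s g^{-1}=s'$ and $g h g^{-1}=h'$. We write $g=\gamma v$ with $\gamma\in\Gamma_u$ and $v\in U_u$. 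Since both $g$ and $\gamma$ normalize $\Gamma_u$, so does $v=\gamma^{-1}g$.

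The key step is to show that $v$ centralizes $\Gamma_u$. Take $x\in\Gamma_u$ and consider the commutator $c=vxv^{-1}x^{-1}$.
\begin{itemize}
\item Writing $c=v\cdot(xv^{-1}x^{-1})$ and using that $U_u$ is normal in $\rZ_{G^\vee}(u)$, we get $c\in U_u$.
\item Writing $c=(vxv^{-1})\cdot x^{-1}$ and using that $v$ normalizes $\Gamma_u$, we get $c\in\Gamma_u$.
\end{itemize}
Hence $c\in \Gamma_u\cap U_u=\{1\}$, so $v$ commutes with every element of $\Gamma_u$.

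Since $s,h\in\Gamma_u$, it follows that $s'=gsg^{-1}=\gamma v s v^{-1}\gamma^{-1}=\gamma s\gamma^{-1}$, and likewise $h'=\gamma h\gamma^{-1}$. So $(s,h)$ and $(s',h')$ are conjugate under $\gamma\in\Gamma_u$, as claimed.

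The only point that needs care is the input that $\rZ_{G^\vee}(u)$ is the semidirect product of its unipotent radical with the chosen reductive part $\Gamma_u$. This holds in characteristic zero by Mostow's theorem, and it applies also to disconnected groups. Beyond that, the argument is a purely group-theoretic commutator computation, and I do not expect any real obstacle.
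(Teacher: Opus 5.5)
Your proof is correct and is essentially the paper's argument. The paper also writes $\rZ_{G^\vee}(u)=\Gamma_u N$ with $N$ unipotent and normal, and shows that the part $N'\subset N$ normalizing $\Gamma_u$ centralizes it because $\Gamma_u$ and $N'$ are both normal in $\Gamma_u N'$ with trivial intersection. Your element-wise commutator computation for $v$ is the same step carried out directly.
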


\begin{proof} 
We have that $\rZ_{G^\vee}(u) = \Gamma_uN$ where $N$ is a unipotent subgroup normal in $\rZ_{G^\vee}(u)$. Let $N' = \{n \in N \mid n\Gamma_un^{-1} = \Gamma_u\}$. Then $\rZ_{G^\vee}(u) \cap N_{G^\vee}(\Gamma_u) = \Gamma_uN'$. It suffices to show that $N'$ centralizes $\Gamma_u$, but this follows from the fact that $\Gamma_u$ and $N'$ are both normal in $\Gamma_uN'$. 
\end{proof}

\begin{cor}\label{cor-cpt-ell}
The map $\kappa$ of Proposition \ref{prop-map-on-sets} induces a linear isomorphism
\begin{equation*}
\bigoplus_{u \in G^\vee_{\un}/G^\vee} \bC[\cY(\Gamma_u)/\sim]^{\Gamma_u} \longrightarrow \bigoplus_{M^\vee \in \mathcal{L}^\vee/G^\vee} \bigoplus_{u \in M^\vee_{\un}/M^\vee} \bC[\cY(\Gamma_u^{M^\vee})_{M^\vee-\ess}/(\rZ_{M^\vee}^\circ \times \rZ_{M^\vee}^\circ)]^{N_{\Gamma_u}(M^\vee)}
\end{equation*}

\end{cor}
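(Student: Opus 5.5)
The plan is to deduce the corollary from Proposition \ref{prop-map-on-sets} by rewriting both sides as spaces of functions on sets of $G^\vee$-orbits. The isomorphism is then just pullback along the bijection $\kappa$.

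\textbf{Left-hand side.} Fix a representative $u$ of each unipotent class. $\Gamma_u$ acts on $\cY(\Gamma_u)/_\sim$ by conjugation, so $\bC[\cY(\Gamma_u)/_\sim]^{\Gamma_u}$ is the space of functions on $\Gamma_u\backslash(\cY(\Gamma_u)/_\sim)$. I claim that $\Gamma_u$-orbits of classes in $\cY(\Gamma_u)/_\sim$ are the same as the $G^\vee$-orbits in $\cpt(G^\vee)$ lying over the class of $u$.
\begin{itemize}
\item Any $g\in G^\vee$ sending $(u,s,h)$ to $(u,s',h')$ lies in $\rZ_{G^\vee}(u)$.
\item Such a $g$ can be modified by an element of $\rZ_{G^\vee}(u)$ so that it normalizes $\Gamma_u$, since Levi complements of the unipotent radical of $\rZ_{G^\vee}(u)$ are conjugate.
\item The preceding lemma then shows that $(s,h)$ and $(s',h')$ are $\Gamma_u$-conjugate.
\end{itemize}
(In fact conjugation by $\Gamma_u$ already acts trivially on $\sim$-classes, by the remark after Lemma \ref{l:ell-comp}, so the invariants are simply all functions.) Summing over $u$, the left-hand side is $\bC[\cpt(G^\vee)/G^\vee]$.

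\textbf{Right-hand side.} The analogous statement is that $\ess(G^\vee)/G^\vee$ decomposes, over representatives $M^\vee$ of $G^\vee$-classes of Levis and $u\in M^\vee_\un/M^\vee$, into orbits of the stabilizer of $(M^\vee,u)$ on $\cY(\Gamma_u^{M^\vee})_{M^\vee-\ess}/(\rZ_{M^\vee}^\circ\times\rZ_{M^\vee}^\circ)$. That stabilizer is $N_{G^\vee}(M^\vee)\cap\rZ_{G^\vee}(u)$.
\begin{itemize}
\item Taking invariants under this stabilizer gives functions on these orbit sets.
\item $M^\vee$-conjugation acts trivially on the quotient by Lemma \ref{l:equiv-ess}.
\item So only the image in a finite group $W(M^\vee)$-type quotient matters.
\item As for the left-hand side, I would use the preceding lemma (applied inside $N_{G^\vee}(M^\vee)$) to identify this stabilizer up to unipotent elements with $N_{\Gamma_u}(M^\vee)$.
\end{itemize}
Then the right-hand side is $\bC[\ess(G^\vee)/G^\vee]$.

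\textbf{Conclusion and main obstacle.} Pulling back functions along the bijection of Proposition \ref{prop-map-on-sets} gives the linear isomorphism. The main obstacle is the bookkeeping in the second step: the indexing on the right uses $u$ modulo $M^\vee$ but invariants under $N_{\Gamma_u}(M^\vee)$. One must check carefully that $\rZ_{G^\vee}(u)\cap N_{G^\vee}(M^\vee)$ acts on the essentially elliptic classes through $N_{\Gamma_u}(M^\vee)$. Here I would argue as in the preceding lemma: the unipotent radical part commutes with $\Gamma_u$ after normalizing, hence fixes $s$ and $h$.
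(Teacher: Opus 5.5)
Your proposal is correct and follows the paper's own proof. Both sides are identified with functions on $\cpt(G^\vee)/G^\vee$ and $\ess(G^\vee)/G^\vee$; the preceding lemma, and the same reasoning on the Levi side, reduces the relevant stabilizers to $\Gamma_u$ and $N_{\Gamma_u}(M^\vee)$; and one pulls back along the bijection of Proposition~\ref{prop-map-on-sets}.

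The bookkeeping you flag is settled by a single observation. Write $g=\gamma n\in\rZ_{G^\vee}(u)$ with $\gamma\in\Gamma_u$ and $n$ in the unipotent radical of $\rZ_{G^\vee}(u)$. If $x\in\Gamma_u$ and $gxg^{-1}\in\Gamma_u$, then $gxg^{-1}=\gamma x\gamma^{-1}$, since both sides lie in $\Gamma_u$ and have the same image modulo the unipotent radical. Applying this to $x\in\rZ_{M^\vee}^\circ$ and to $s,h$ shows that $g$ acts on pairs through $\gamma\in N_{\Gamma_u}(M^\vee)$. This also gives your ``modify $g$ to normalize $\Gamma_u$'' step on the left-hand side directly.
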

\begin{proof}
    This follows immediately from Proposition \ref{prop-map-on-sets} by taking functions on both sides of the bijection $\kappa$. Note that we must use the previous lemma when taking $\Gamma_u$-invariants on the lefthand side.
    To explain the appearance of $N_{\Gamma_u}(M^\vee)$, we can use a similar logic as in the previous lemma to show that conjugation of $\Gamma_u^{M^\vee}$ by $N_{G^\vee}(M^\vee) \cap \rZ_{G^\vee}(u) \cap N_{G^\vee}(\Gamma_u^{M^\vee})$ is equivalent to conjugation by $N_{\Gamma_u}(M^\vee)$.
\end{proof}

\begin{ex}\label{ex-pairs}
\begin{enumerate}
    \item If $u$ is regular, then $\Gamma_u = \rZ_{G^\vee}$, the corresponding maximal torus is $S=\rZ_{G^\vee}^\circ$, the corresponding Levi subgroup $M^\vee$ is $G^\vee$, and $(s, h)$ is already essentially $G^\vee$-elliptic in $\Gamma_u^{M^\vee} = \Gamma_u=\rZ_{G^\vee}$. For this choice of $u$, $\kappa$ identifies compact pairs in $\rZ_{G^\vee}$ with elliptic pairs in the component group $\rZ_{G^\vee}/\rZ_{G^\vee}^\circ$. 
\item 
For every unipotent $u \in G^\vee$, we may consider the pair $(s, h) = (1, 1) \in \cY(\Gamma_u)$. In this case Proposition \ref{prop-map-on-sets} tells us that, up to $G^\vee$-conjugacy, there exists a unique Levi subgroup $M^\vee \subset G^\vee$ containing $u$ such that $(\Gamma_u \cap M^\vee)^\circ = \rZ_{M^\vee}^\circ$ (i.e. such that $u$ is distinguished in $M^\vee$). Further, $\kappa$ gives a bijection between unipotent classes in $G^\vee$ and conjugacy classes of pairs $(M^\vee, u_d)$ where $M^\vee$ is a Levi subgroup of $G^\vee$ and $u_d$ is a distinguished unipotent element of $M^\vee$. We have thus recovered a part of the well-known classification of Bala--Carter (cf. \cite[\S8]{CM}).
\end{enumerate}
\end{ex}

\section{The essentially elliptic Fourier transform}\label{s:ess-ell}

We now extend the results of \cite{ACR} from the elliptic case to the essentially elliptic case. 
We first recall the definition of the linear combinations $\Pi(u, s, h)$ (cf. \cite[Section 9.1]{ACR}) and prove a basic result that will relate these combinations for $G$ to those for its Levi subgroups.

Given a unipotent element $u \in G^\vee$ and a compact pair $(s, h) \in \cY(\Gamma_u)$, let
\begin{equation} \label{eqn:pi_ush}
    \Pi(u, s, h) = \sum_{\rho \in \widehat{A}_{\Gamma_{u}}(s)} \rho(h)\pi(u, s, \rho) \in \bigoplus_{G' \in \InnT^p(G)} R_{\un}(G').
\end{equation}
If we want to specify the group $G$, we write $\Pi^G(u, s, h)$ for $\Pi(u, s, h)$.
Note that $(s, h) \sim_L (s, h')$, then $h$ and $h'$ are conjugate in $A_{\Gamma_u}(s)$ (see Lemma \ref{l:ell-comp}), and therefore $\Pi(u, s, h)=\Pi(u, s, h')$.

\begin{rem} If $(s, h) \sim_R (s', h)$, $\Pi(u, s, h)\neq \Pi(u, s', h)$ in general. For example, if $u=1$ and $G^\vee$ is simply connected, then $\Pi(1,s,1)=\pi(1,s,\mathbf 1)$. This is precisely the unramified minimal principal series with Satake parameter $s$. Of course, $\pi(1,s,\mathbf 1)\neq \pi(1,s',\mathbf 1)$ if $s$ and $s'$ are not conjugate. However, we'll later show that the images of of $\Pi(u, s, h)$ and $\Pi(u, s', h')$ are same in the space $\Rcpt(G)$, which will be defined in Section \ref{s:rigid}.
\end{rem}

In the following lemma, as in Section \ref{s:llc}, given an enhanced Langlands parameter $(\varphi_{(u, s)}, \phi)$ for a split connected reductive group $M$, we write $M_\phi$ for the pure inner twist of $M$ such that $\pi(u, s, \phi)$ is a representation of $M_\phi$. 

\begin{lem}\label{l:ind-Pi}
Let $M^\vee$ be a Levi subgroup of $G^\vee$, let $u \in M^\vee$ be unipotent, and suppose $(s, h) \in \cY(\Gamma_u)_{M^\vee-\ess}$ with $s$ of finite order. Then 
\begin{equation*}
\sum_{\varphi \in \widehat{A}_{\Gamma_u^{M^\vee}}(s)} \phi(h) i_{M_\phi}^{G_{\phi}} \pi^{M_{\phi}}(u, s, \phi) = \Pi(u, s, h),
\end{equation*}
where for each $\phi$, $G_\phi$ is the image of $M_\phi$ under (\ref{eqn-Inn-Levi}). 
In other words, in the notation of Remark \ref{r:par-ind-twists}, we have $i_M^G (\Pi^M(u, s, h)) = \Pi(u, s, h)$.
\end{lem}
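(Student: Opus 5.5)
We will compare both sides one Langlands parameter at a time. The key input is the compatibility of the unipotent local Langlands correspondence with parabolic induction (Solleveld's construction, or the Kazhdan--Lusztig/Lusztig description via graded Hecke algebras). For the $M$-parameter $\varphi^M_{(u,s)}$ and $\phi\in\widehat{A}_{M^\vee}(us)$, it expresses $i_{M_\phi}^{G_\phi}\pi^{M_\phi}(u,s,\phi)$, in the Grothendieck group, as $\sum_{\rho}\langle \rho|_{A_{M^\vee}(us)},\phi\rangle\,\pi^{G}(u,s,\rho)$. Here $\rho$ runs over $\widehat{A}_{G^\vee}(us)$, and the map $A_{M^\vee}(us)\to A_{G^\vee}(us)$ is induced by inclusion. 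This is the standard statement that induction from a Levi of a (tempered, $s$ compact) representation decomposes according to restriction of enhancements. It requires $s$ to be compact so that we are in the tempered situation, which is why the lemma assumes $s$ has finite order. One must also check that the pure inner twists match. The twist $G_\rho$ is determined by $\rho|_{\pi_0(\rZ_{G^\vee})}$, and $M_\phi$ by $\phi|_{\pi_0(\rZ_{M^\vee})}$. Compatibility with the injection (\ref{eqn-Inn-Levi}) amounts to the fact that the map $\rZ_{G^\vee}\subset \rZ_{M^\vee}$ is dual to $H^1(F,\bM)\to H^1(F,\bG)$ via Kottwitz's isomorphism. I would cite this rather than reprove it.

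Granting this, first identify the component groups: $A_{G^\vee}(us)=A_{\Gamma_u}(s)$ and $A_{M^\vee}(us)=A_{\Gamma_u^{M^\vee}}(s)$, since the unipotent radical of $\rZ(u)$ is connected. Write $\iota\colon A_{\Gamma_u^{M^\vee}}(s)\to A_{\Gamma_u}(s)$ for the induced map. Expanding the left-hand side then gives
\[\sum_{\rho}\Big(\sum_{\phi}\phi(h)\langle\rho\circ\iota,\phi\rangle\Big)\pi(u,s,\rho).\]
By orthogonality (column relations) for the finite group $A_{\Gamma_u^{M^\vee}}(s)$, the inner sum equals $(\rho\circ\iota)(\bar h)=\rho(\iota(\bar h))=\rho(h)$. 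Here $\bar h$ is the image of $h$ in $A_{\Gamma_u^{M^\vee}}(s)$, which makes sense since $h\in \rZ_{\Gamma_u^{M^\vee}}(s)$. The resulting expression is exactly $\Pi(u,s,h)$ as defined in (\ref{eqn:pi_ush}).

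The main obstacle is the first step, namely the precise form and normalization of the induction compatibility. Two points need care. First, the enhancements must match, not just the parameters: the enhancement for $M$ should be compared with the one for $G$ via $\iota$, with no twist by a sign or character. Second, $\iota$ need not be injective or surjective. I would try to extract this from Solleveld's results on compatibility of the LLC with parabolic induction for unipotent representations, or from Lusztig's geometric construction using the fact that $u$ and $s$ lie in $M^\vee$. I do not expect the essential ellipticity hypothesis on $(s,h)$ to be needed for this formula. It matters only later, so that these combinations form a basis of the essentially elliptic space.
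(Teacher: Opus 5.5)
Your proposal is correct and follows essentially the same route as the paper. The paper cites \cite[Lemma 5.9]{So1} for the decomposition $i_{M_\phi}^{G_\phi}\pi^{M_\phi}(u,s,\phi)=\sum_{\rho}\langle\rho,\phi\rangle_{A_M}\,\pi^{G}(u,s,\rho)$, interchanges the two sums, and applies column orthogonality on $A_{\Gamma_u^{M^\vee}}(s)$ to reduce the inner sum to $\rho(h)$, which is exactly your argument.
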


\begin{proof} For simplicity of notation, write $A_M$ for $A_{\Gamma_u^{M^\vee}}(s)$ and $A_G$ for ${A}_{\Gamma_u^{G^\vee}}(s)$.
By \cite[Lemma 5.9]{So1}, we have 
\begin{equation*}
i_{M_\phi}^{G_{\phi}} \pi^{M_\phi}(u, s, \phi) = \sum_{\rho \in \widehat{A}_G} \la \rho, \phi\ra_{A_M} \pi^{G}(u, s, \rho),
\end{equation*}
where $\langle \cdot, \cdot \ra_{A_M}$ is the character pairing on $A_M$.
Thus we have
\begin{align*}
\sum_{\phi \in \widehat{A}_{M}} \phi(h)i_{M_\phi}^{G_{\phi}} \pi^{M_{\phi}}(u, s, \phi) &= \sum_{\phi \in \widehat{A}_M} \sum_{\rho \in \widehat{A}_G} \phi(h)\langle \rho, \phi\rangle_{A_M} \pi^{G_\rho}(u, s, \rho)\\
&= \sum_{\rho \in \widehat{A}_G} (\sum_{\phi \in \widehat{A}_{M}} \phi(h)\langle \rho, \phi\rangle )\pi^{G_\phi}(u, s, \rho)\\
&= \sum_{\rho \in \widehat{A}_G} \rho(h)\pi(u, s, \rho)\\
&= \Pi(u, s, h)
\end{align*}
\end{proof}

\subsection{The essential elliptic space}
Let 
\begin{equation*}
\Ell(G^\vee) = \bigoplus_{u \in G^\vee_\un/G^\vee} \bC[\cY(\Gamma_u)_{G^\vee-\ess}]^{\Gamma_u}
\end{equation*}
where 
the action of $\Gamma_u$ is by conjugation. Let
\begin{equation*}
\Ress(G) = \text{span}_{\bC}\{\Pi(u, s, h) \mid (s, h) \in \underset{u \in G^\vee_{\un}}\bigcup \cY(\Gamma_u)_{G^\vee-\ess}\}.
\end{equation*}
Note that $\rZ_{G^\vee}^\circ$ acts on $\cY(\Gamma_u)_{G^\vee-\ess}$ via $z \cdot (s, h) = (s, zh)$. If $z \in \rZ_{G^\vee}^\circ$, then $\Pi(u, s, zh) = \Pi(u, s, h)$. So we have a well-defined map 
\begin{equation*}
\tau: \Ell(G^\vee)/\rZ_{G^\vee}^\circ \to \Ress(G)
\end{equation*}
given by sending $(s, h) \in \cY(\Gamma_u)_{G^\vee-\ess}$ to $\Pi(u, s, h)$.

As in Section \ref{s:ind-res-prelim}, for every rational parabolic subgroup $P=MN$ of $G'\in\InnT^p(G)$, let $i_P^{G'}: R_\un(M)\to R_\un(G')$ denote the parabolic induction map. Let $\overline R_\un(G')$ denote the {\it elliptic representation space} of $G'$, that is, the quotient of $R_\un(G)$ by the subspace of proper parabolically induced representations $R_\un^\ind(G')=\sum_{P\neq G'} i_P^{G'}(R_\un(M))$. Set
\[
\Ell_\un^p(G)=\bigoplus_{G'\in \InnT^p(G)} \overline R_\un(G').
\]

In the following theorem, we write $p_e: \oplus_{G' \in \InnT^p(G)} R_\un(G') \to \Ell^p_{\un}(G)$ for the natural projection map.
\begin{thm}\label{t:ess}
The map $\tau$ is a linear isomorphism. Moreover, the set 
     \begin{equation*}
     \{p_e(\Pi(u, s, h)) \mid (s, h) \in \bigcup_{u \in G^\vee_{\un}/G^\vee} \Gamma_u\backslash\cY(\Gamma_u)_{G^\vee-\ess}\} 
     \end{equation*}
     forms a basis of $\Ell_\un^p(G)$,  hence
 \[
 \Ell_\un^p(G)\cong \Ress(G)\cong \Ell(G^\vee)/\rZ_{G^\vee}^\circ. 
 \]
\end{thm}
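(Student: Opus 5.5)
We reduce to the semisimple elliptic result of \cite{ACR} and then do a dimension count. The argument has three parts: compare dimensions of source and target, show the images $p_e(\Pi(u,s,h))$ span $\Ell_\un^p(G)$, and read off injectivity of $\tau$ together with the basis statement.

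\textbf{Step 1 (the source).} By Lemma \ref{l:equiv-ess} applied with $\Gamma'=\Gamma_u$ and $\Gamma=G^\vee$, the orbit set $\Gamma_u\backslash\cY(\Gamma_u)_{G^\vee-\ess}$ modulo $\rZ_{G^\vee}^\circ\times\rZ_{G^\vee}^\circ$ equals $\cY(\Gamma_u)_{G^\vee-\ess}/_\sim$. Next consider the quotient $\bar\Gamma_u=\Gamma_u/\rZ_{G^\vee}^\circ$. Essentially elliptic pairs in $\Gamma_u$ correspond to pairs in $\bar\Gamma_u$ whose joint centralizer is finite, up to the finite kernel of the isogeny coming from $\rZ_{G^\vee}\cap G^\vee_{\der}$. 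Via Lemma \ref{l:ell-comp}(2), the $\rZ_{G^\vee}^\circ$-orbits of $\Gamma_u$-classes are then counted by pairs $(s,\bar h)$, where $s$ runs over the finitely many classes and $\bar h$ over the elliptic conjugacy classes of $A_{\Gamma_u}(s)$. In particular $\Ell(G^\vee)/\rZ_{G^\vee}^\circ$ is finite dimensional.

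\textbf{Step 2 (the target and spanning).} Every irreducible unipotent tempered representation of $G'$ is $\pi(u,s,\rho)$ with $s$ compact, and Lemma \ref{l:center-twist} shows that modulo unramified twists only $s$ modulo $\rZ_{G^\vee}^\circ$ matters. The classes $p_e(\pi(u,s,\rho))$ span $\Ell_\un^p(G)$. If $(s,h)$ is not essentially elliptic, choose a maximal torus $S\subset\rZ_{\Gamma_u}(s,h)$ strictly larger than $\rZ_{G^\vee}^\circ$ and set $M^\vee=\rZ_{G^\vee}(S)$, a proper Levi subgroup. Lemma \ref{l:ind-Pi} then gives $\Pi(u,s,h)=i_M^G\Pi^M(u,s,h)$, hence $p_e(\Pi(u,s,h))=0$. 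By character orthogonality on $A_{\Gamma_u}(s)$, every $\pi(u,s,\rho)$ is a combination of the $\Pi(u,s,h)$. Combining these two facts, the $p_e(\Pi(u,s,h))$ with $(s,h)$ essentially elliptic span $\Ell_\un^p(G)$, and $\tau$ is surjective onto $\Ress(G)$.

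\textbf{Step 3 (injectivity, the main obstacle).} We must show
\[
\dim\Ell_\un^p(G)\ \geq\ \#\bigl(\textstyle\bigcup_u \cY(\Gamma_u)_{G^\vee-\ess}/_\sim\bigr).
\]
For this we reduce to the semisimple case:
\begin{enumerate}[(a)]
\item Passing to $G/\rZ_G^\circ$ and to the derived group relates $\Ell_\un^p(G)$ to the elliptic space of a semisimple group whose dual is $G^\vee_{\der}$ modulo center. Twisting by unramified characters of $G$ corresponds exactly to the $\rZ_{G^\vee}^\circ$-action.
\item Establish the semisimple case, i.e.\ Conjecture 9.1 of \cite{ACR}, for all split semisimple $\bG$, not just adjoint ones. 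The plan is to use the isometry of the elliptic pairing of \cite{ACR}, which holds for any semisimple $\bG$. The pairing $\langle\Pi(u,s,h),\Pi(u',s',h')\rangle_{\ellip}$ is computed through the Euler--Poincaré/Arthur formula on the Hecke algebra side. It should equal the natural pairing on $\bC[\cY(\Gamma_u)_\ellip]^{\Gamma_u}$, namely $|\rZ_{\Gamma_u}(s,h)|$ times a Kronecker delta. Since this Gram matrix is nondegenerate, the elements are linearly independent.
\end{enumerate}
The delicate point is (b): adapting the adjoint-case proof of \cite{ACR} to nontrivial $\pi_0(\rZ_{G^\vee})$. The fix we will try is to run the adjoint-case argument summed over all pure inner twists simultaneously, since $\InnT^p(G)$ is matched with $\widehat{\pi_0(\rZ_{G^\vee})}$. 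That way the central characters $\rho|_{\rZ_{G^\vee}}$ are absorbed into the same character-orthogonality argument, and the computation should match $\rZ_{G^\vee}/\rZ_{G^\vee}^\circ\subset\rZ_{\Gamma_u}(s,h)$.

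Once the Gram matrix is shown to be nondegenerate, $\tau$ is injective. Together with Step 2 this gives $\Ress(G)\to\Ell_\un^p(G)$ as an isomorphism, the stated basis, and hence $\Ell_\un^p(G)\cong\Ress(G)\cong\Ell(G^\vee)/\rZ_{G^\vee}^\circ$.
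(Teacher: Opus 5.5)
There is a genuine gap, and it sits exactly where the paper's proof does its work. In Step 3(b) you locate the difficulty in nontrivial $\pi_0(\rZ_{G^\vee})$ and propose to absorb it by summing over $\InnT^p(G)$. But the adjoint case of \cite{ACR} already has nontrivial $\rZ_{G^\vee}$ (e.g.\ $\mu_n$ for $\PGL_n$) and already works with the sum over all pure inner twists, so this adds nothing.

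The actual obstruction is recorded in \cite[Remark 11.12]{ACR}. In the key comparison \cite[Proposition 11.11]{ACR} (Waldspurger's elliptic pairing theorem plus Clifford theory), the component-group pairing that comes out is $(\phi,\phi')^{A_u(j)}_{\ellip}$ for a subgroup $A_u(j)\subset A_u$, not $(\phi,\phi')^{A_u}_{\ellip}$. The missing idea is \cite[Lemma 4.4]{AMS1}: the supports of $\phi,\phi'$ lie in $A_u(j)$, whence $(\phi,\phi')^{A_u}_{\ellip}=\frac{|A_u(j)|}{|A_u|}(\phi,\phi')^{A_u(j)}_{\ellip}$. Accordingly, your predicted Gram matrix ($|\rZ_{\Gamma_u}(s,h)|$ times a Kronecker delta, i.e.\ an isometry for every semisimple $\bG$) is not what holds. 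The paper explicitly says one only gets a kind of homothety. That still gives nondegeneracy, but only after this support lemma is invoked.

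The reductive-to-semisimple passage (Steps 1 and 3(a)) is also not sound as written. In the theorem one quotients by $\rZ_{G^\vee}^\circ$ acting on $h$ only, so $s$ still moves in a torus coset. For $G=\GL_1$, the target $\Ell^p_\un(G)$ is spanned by all unramified characters of $F^\times$ and the source by all $s\in\bC^\times$. Thus $\Ell(G^\vee)/\rZ_{G^\vee}^\circ$ is not finite dimensional. Your count of $\sim$-classes is a quotient by $\rZ^\circ_{G^\vee}\times\rZ^\circ_{G^\vee}$, so it measures the coinvariants of Corollary~\ref{c:ess-ell}, not $\Ell^p_\un(G)$. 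A dimension count would have to be done one infinitesimal character at a time.

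Moreover, the Euler--Poincar\'e pairing of $G$ vanishes identically when $G$ is not semisimple. The comparison with $G_\der$ is also not an isomorphism: by the remark after Corollary~\ref{c:ess-ell}, the dimensions are $1$ versus $2$ for $\GL_2$ and $\SL_2$. So ``relating'' $\Ell^p_\un(G)$ to a semisimple group does not transfer linear independence.

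The paper makes no such reduction. It reruns the \cite{ACR} argument directly for reductive $G$, defining the elliptic theories of $A_x$ and $W_{M^\vee}$ with respect to $\mathfrak s_x/\mathfrak z_{G^\vee}$ and $\mathfrak z_{M^\vee}/\mathfrak z_{G^\vee}$ instead of $\mathfrak s_x$ and $\mathfrak z_{M^\vee}$. Waldspurger's theorem \cite{Wa} still applies in that setting. Your Step 2 (spanning, via Lemma~\ref{l:ind-Pi}) is fine.
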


\begin{proof} This is essentially in \cite{ACR}, we only explain the small differences.

    When $G$ is semisimple and adjoint, this is proved in \cite[Theorem 11.1]{ACR}. Firstly, we note that in fact the restriction to ``adjoint'' in \cite{ACR} is not necessary. The key step is provided by \cite[Proposition 11.11]{ACR}, where Waldspurger's elliptic pairing theorem and Clifford theory are used to obtain a correspondence between the elliptic space for the representations of the relevant finite Weyl groups and the elliptic space of component groups $A(u)$. As noted in \cite[Remark 11.12]{ACR}, the issue is that the elliptic pair for the component group that occurs in the proposition is $(\phi,\phi')^{A_u(j)}_{\text{ell}}$ rather than $(\phi,\phi')^{A_u}_{\text{ell}}$ as expected. However, by \cite[Lemma 4.4]{AMS1}, the supports of $\phi,\phi'$ are contained in $A_u(j)$, implying that $(\phi,\phi')^{A_u}_{\text{ell}}=\frac{|A_u(j)|}{|A_u|}(\phi,\phi')^{A_u(j)}_{\text{ell}}$, whence $(\phi,\phi')^{A_u}_{\text{ell}}$ is a nonzero scalar multiple of $(\mathbf \rho,\mathbf \rho')^{W_j}_{\text{ell}}$, with the notation as in {\it loc. cit.}. The rest of the proof in the semisimple case is identical with the adjoint case, hence \cite[Theorem 11.1]{ACR} holds without an assumption on the isogeny, except that the isomorphism in the theorem is not necessarily a isometry (but a kind of homothety).

    Secondly, to extend from the semisimple to the reductive case, we note that the proofs in {\it loc. cit.} hold as written with two modifications:
    \begin{enumerate}
    \item[(a)] If $x\in G^\vee$, the elliptic theory of the finite group $A_x=Z_{G^\vee}(x)/Z_{G^\vee}^\circ$ is defined with respect to the linear representation $\mathfrak s_x/\mathfrak z_{G^\vee}$ rather than $\mathfrak s_x$, where $\mathfrak s_x$ is the abstract Cartan subalgebra of $Z_{G^\vee}^\circ(x)$ and $\mathfrak z_{G^\vee}$ is the Lie algebra of $\rZ_{G^\vee}^\circ$.
    \item[(b)] If $M^\vee$ is a Levi subgroup of $G^\vee$ (or more generally of a reductive group $H^\vee=Z_{G^\vee}(s)$ that appears in the classification) that has an equivariant cuspidal local system supported on a nilpotent orbit, then the elliptic theory of the finite Coxeter group $W_{M^\vee}=N_{H^\vee}(M^\vee)/M^\vee$ is defined with respect to the linear representation $\mathfrak z_{M^\vee}/\mathfrak z_{G^\vee}$, rather than $\mathfrak z_{M^\vee}$, where $\mathfrak z_{M^\vee}$ is the Lie algebra of $\rZ_M^\circ$.
    \end{enumerate}
    Notice that the key result that one uses in the proof, namely \cite[Theor\'eme]{Wa}, holds in this generality.
\end{proof}

We can endow $\Ell^p_\un(G)$ with a symmetric bilinear form:
\begin{equation}
    \EP_{\der}(U,V)=\sum_{i\ge 0}(-1)^i \Ext^i_{G_\der}(U,V).
\end{equation}
The reason for considering the Euler--Poincar\'e pairing with respect to the derived subgroup $G_\der$ is that, when $G$ is not semisimple, the usual Euler--Poincar\'e pairing with respect to $G$ is identically zero.
Let $\Psi(G)$ be the group of unramified characters of $G$. It is clear that every $V-V\otimes \chi$, $\chi\in \Psi(G)$, is in the kernel of $\EP_{\der}$, hence the pairing descends naturally to the space of coinvariants $\Ell^p_\un(G)_{\Psi(G)}$.

\medskip

Note that $\rZ_{G^\vee}^\circ$ also acts on $\Ell(G^\vee)$ by $z \cdot (s, h) = (zs, h)$. We have that $\rZ_{G^\vee}^\circ \simeq \Psi(G)$, and $\Psi(G)$ acts on $\Ress(G)$ by twisting by unramified characters. The map $\tau$ above is equivariant for these two actions of $\rZ_{G^\vee}^\circ$. Then Theorem \ref{t:ess} immediately gives:

\begin{cor}\label{c:ess-ell}
   The map $\tau$ induces an isomorphisms $\Ell^p_\un(G)_{\Psi(G)}\cong \Ell(G^\vee)/(\rZ_{G^\vee}^\circ \times \rZ_{G^\vee}^\circ)$. We have a commutative diagram
    \begin{equation}\label{e:der}
    \begin{tikzcd}
\Ell^p_\un(G)_{\Psi(G)} \arrow{r}{\tau_G} \arrow[swap]{d}{r} & \Ell(G^\vee)/(\rZ_{G^\vee}^\circ \times \rZ_{G^\vee}^\circ) \arrow{d}{p} \\%
\Ell^p_\un(G_\der) \arrow{r}{\tau_{G_\der}}& \Ell(G^\vee/\rZ_{G^\vee}^\circ)
\end{tikzcd}
\end{equation}
where $r$ is the restriction of representations and $p$ is the natural map $(s,h)\mapsto (s\rZ_{G^\vee}^\circ,h\rZ_{G^\vee}^\circ)$.
\end{cor}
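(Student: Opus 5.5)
The plan is to deduce both parts of the corollary directly from Theorem \ref{t:ess}, by passing to $\Psi(G)$-coinvariants on one side and to $\rZ_{G^\vee}^\circ$-orbits on the other.

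\textbf{Step 1: equivariance of $\tau$.} Identify $\Psi(G)\cong \rZ_{G^\vee}^\circ$ via $z\mapsto \chi_z$, as in the proof of Lemma \ref{l:center-twist}. That proof (the Haines correspondence) gives
\[
\Pi(u,zs,h)=\Pi(u,s,h)\otimes\chi_z .
\]
Indeed $A_{\Gamma_u}(zs)=A_{\Gamma_u}(s)$, since $z$ is central and connected, so the $\rho$'s and the values $\rho(h)$ do not change. Projecting to $\Ell^p_\un(G)$ is compatible with twisting, because twisting preserves the subspace of properly induced representations. Hence $p_e\circ\tau$ is $\rZ_{G^\vee}^\circ$-equivariant, where $z$ acts on the left side by $s\mapsto zs$ and on the right side by $\otimes\chi_z$.

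\textbf{Step 2: first isomorphism.} Taking coinvariants of an equivariant linear isomorphism gives an isomorphism of coinvariant spaces. The coinvariants of $\bC[X]$ under a group acting on the set $X$ are exactly $\bC[X/\text{group}]$, after identifying functions on orbits with the span of orbit representatives. Applying this to Theorem \ref{t:ess}, with $\Ell(G^\vee)/\rZ_{G^\vee}^\circ$ already the quotient by the $h$-action, yields
\[
\Ell^p_\un(G)_{\Psi(G)}\cong \Ell(G^\vee)/(\rZ_{G^\vee}^\circ\times\rZ_{G^\vee}^\circ).
\]
Lemma \ref{l:equiv-ess} ensures that this double quotient is the same as $\sim$-classes, so the basis description persists.

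\textbf{Step 3: the square.} Here I check commutativity on the basis elements $p_e(\Pi(u,s,h))$. The dual group of $G_\der$ is $G^\vee/\rZ_{G^\vee}^\circ$ (for $G$ split). Write $\bar u,\bar s,\bar h$ for the images.
\begin{enumerate}
\item Unipotent classes match, and $\Gamma_{\bar u}=\Gamma_u/\rZ_{G^\vee}^\circ$.
\item The condition "essentially $G^\vee$-elliptic" for $(s,h)$ becomes "elliptic" for $(\bar s,\bar h)$.
\item The map $r$ sends $\pi^{G}(u,s,\rho)$ to the restriction to $G_\der$. By compatibility of the unipotent LLC with the isogeny $G_\der\to G$ (Solleveld's results on restriction, \cite{So1}), this restriction is $\sum \pi^{G_\der}(\bar u,\bar s,\bar\rho)$ over constituents $\bar\rho$ of $\rho$ pulled back along $A_{\bar\Gamma_{\bar u}}(\bar s)\to A_{\Gamma_u}(s)$, or its dual inflation.
\item A character-orthogonality computation, identical to the one in Lemma \ref{l:ind-Pi}, then gives $r(\Pi^G(u,s,h))=\Pi^{G_\der}(\bar u,\bar s,\bar h)$, possibly up to a sum over lifts.
\end{enumerate}
This establishes the square.

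\textbf{Main obstacle.} I expect Step 3(3) to be the hardest: controlling how component groups change under $\Gamma_u\to\Gamma_u/\rZ_{G^\vee}^\circ$. The group $A(\bar s)$ can be larger, because new components arise from elements commuting with $s$ only modulo the centre. To handle this, I would use the bijection of $h$-classes modulo $\rZ^\circ$ and verify it on the basis.
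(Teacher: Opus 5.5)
Your Steps 1--2 are the paper's argument. The paper only observes that $\tau$ intertwines $z\cdot(s,h)=(zs,h)$ with twisting by $\chi_z\in\Psi(G)\cong\rZ^\circ_{G^\vee}$, and then passes to coinvariants in Theorem~\ref{t:ess}.

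The paper gives no argument for the square, so your Step 3 is your own addition. The idea is right, but three points need fixing.

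\textbf{Direction of the component-group map.} You write a map $A_{\Gamma_{\bar u}}(\bar s)\to A_{\Gamma_u}(s)$; the natural map goes the other way. The identity component of $\rZ_{\Gamma_{\bar u}}(\bar s)$ is the image of $\rZ_{\Gamma_u}(s)^\circ$. This is the Lie-algebra argument behind Lemma~\ref{lem-conn-centralizer}, using that $\Ad s$ is semisimple, and the same argument for pairs gives your item (2). Hence there is an injection $A_{\Gamma_u}(s)\hookrightarrow A_{\Gamma_{\bar u}}(\bar s)$. The input you actually need is
\[
\pi^{G'}(u,s,\rho)|_{G'_\der}=\sum_{\bar\rho\in\widehat{A_{\Gamma_{\bar u}}(\bar s)}}\langle\bar\rho|_{A_{\Gamma_u}(s)},\rho\rangle_{A_{\Gamma_u}(s)}\,\pi^{G'_\der}(\bar u,\bar s,\bar\rho).
\]
This is the restriction analogue of \cite[Lemma 5.9]{So1}, with multiplicities. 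It concerns the inclusion $G_\der\hookrightarrow G$, which has torus cokernel and is not an isogeny. It is an external input about the unipotent LLC that the paper does not state, so make sure the version you cite really covers this case.

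\textbf{No sum over lifts.} With the formula above, $\sum_\rho\rho(h)\langle\bar\rho|_{A_{\Gamma_u}(s)},\rho\rangle=\bar\rho(\bar h)$. Exactly as in Lemma~\ref{l:ind-Pi}, this gives $r(\Pi^G(u,s,h))=\Pi^{G_\der}(\bar u,\bar s,\bar h)$ on the nose. The hedge ``possibly up to a sum over lifts'' should go. The square with $p$ as stated requires this exact equality, because $\tau_{G_\der}$ sends basis vectors to basis vectors.

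\textbf{Well-definedness of $r$.} You should also check two things. First, $r$ kills unramified twists. Second, it sends properly induced representations to properly induced ones; this holds because $G=P\,G_\der$, as $G=T\,G_\der$ for split $G$. Finally, pure inner twists match: $Z(G^\vee/\rZ^\circ_{G^\vee})=\rZ_{G^\vee}/\rZ^\circ_{G^\vee}$, so $H^1(F,G_\der)\cong H^1(F,G)$.
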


\begin{rem}
    The vertical arrows in (\ref{e:der}) are not isomorphisms in general. Consider for example $G=\GL_2(F)$, $G_\der=\SL_2(F)$. Then the two spaces in the top row are one dimensional, while the two spaces in the bottom row are two dimensional.
\end{rem}

\subsection{The involution}
Define $\FT^\vee_{\ess}: \Ress(G)_{\Psi(G)} \to \Ress(G)_{\Psi(G)}$ as the linear map given on basis elements by \[\Pi(u, s, h) \mapsto \Pi(u, h, s).\]
Note that $\FT^\vee_{\ess}$ is well defined since $\Pi(u, s, zh) = \Pi(u, s, h)$ for all $z \in \rZ_{G^\vee}^\circ$. 

As in \cite[Section 7]{ACR}, given $G' \in \InnT^p(G)$, we let $\max(G')$ denote a set of conjugacy-class representatives of maximal compact open subgroups of $G$. 
    Given a maximal compact subgroup $K$ of $G$, we defined in \cite[(6.3)]{ACR} an involution 
  \begin{equation*}
  \FT_K : \bigoplus_{H \in \InnT(\overline{K})} R_\un(H) \to \bigoplus_{H \in \InnT(\overline{K})} R_\un(H),
  \end{equation*}
  which is Lusztig's nonabelian Fourier transform in the case when $\overline{K}$ is connected.
Letting
\begin{equation*} 
  \cC(G)_{\cpt,\un} = \bigoplus_{G'\in \InnT^p G}~~\bigoplus_{K'\in \max(G')} R_\un (\overline{K'}) = \bigoplus_{K \in \max(G)} \bigoplus_{H \in \InnT(\overline{K})} R_\un(H),
  \end{equation*}
we define 
  \begin{equation*}
  \FT_{\cpt, \un}: \cC(G)_{\cpt,\un} \to \cC(G)_{\cpt,\un}
  \end{equation*}
  by taking $\FT_K$ on the summand corresponding to $K$. This is
 the map defined in \cite[Definition 7.1]{ACR}.

Since weakly unramified characters are trivial on compact subgroups, parahoric restriction gives a well-defined map 
\begin{equation*}
\res_{\cpt,\un}: \Ress(G)_{\Psi(G)} \to \cC(G)_{\cpt,\un}.
\end{equation*}
The natural extension of \cite[Conjecture 9.7]{ACR} in this setting is as follows:
\begin{conj}\label{c:elliptic} Let $\mathbf{M}$ be a connected reductive $F$-split group, and let $M = \bM(F)$. Consider the following diagram:
  \begin{displaymath}\xymatrix@+1pc{
    {\Ress(M)_{\Psi(M)}} \ar[r]^{\FT_{\ess}^\vee} \ar[d]_{\res_{\cpt,\un}}
    & {\Ress(M)_{\Psi(M)}}\ar[d]^{\res_{\cpt,\un}}\\
    {  \cC(M)_{\cpt,\un}} \ar[r]_{\mathrm{FT_{\cpt,\un}}}
    & {  \cC(M)_{\cpt,\un}} }
\end{displaymath} 
For every  unipotent element $u\in M^\vee$, essentially $M^\vee$-elliptic pair $(s,h) \in \cY(\Gamma_u)_{M^\vee-\ess}$, and maximal compact open subgroup $K$ of $M$, there exists a root of unity $\zeta=\zeta(u,s,h,K)$ such that
\[\res_K (\Pi^M(u,h,s))=\zeta \cdot (\FT_{\cpt,\un}\circ \res_K)(\Pi^M(u,s,h)). \]
\end{conj}
(Note that we have stated the conjecture in terms of $M$ for notational consistency, since we will later relate a conjecture about the compact Fourier transform for $G$ with Conjecture \ref{c:elliptic} for Levi subgroups $M$ of $G$.)

\begin{rem}\label{r:conj-K}
With notation as in Conjecture \ref{c:elliptic}, by the definition of $\FT_{\cpt, \un}$, for each maximal compact subgroup $K$ of $M$ 
we obtain a diagram 
 \begin{equation}\label{e:diagram-K}
 \xymatrix@+1pc{
    {\Ress(M)_{\Psi(M)}} \ar[r]^{\FT^\vee_{\ess}} \ar[d]_{\res_{K}}
    & {\Ress(M)_{\Psi(M)}}\ar[d]^{\res_{K}}\\
    {  \bigoplus_{H \in \InnT(\overline{K})} R_\un(H)} \ar[r]_{\mathrm{FT_{K}}}
    & {  \bigoplus_{H \in \InnT(\overline{K})} R_\un(H)} }
\end{equation} 
Conjecture \ref{c:elliptic} holds if and only if this diagram commutes up to roots of unity for each $K \in \max(M)$.
\end{rem}

\begin{ex}\label{r:torus-ess}
If $M$ is a torus, then there is a unique unipotent element $u = 1 \in M^\vee$, and every pair in $M^\vee$ is essentially elliptic. The space $\Ress(M)_{\Psi(M)}$ is one dimensional (spanned by $\Pi^M(1, 1, 1)$), the involution $\FT^\vee_{\ess}$ is trivial, and Conjecture \ref{c:elliptic} holds.
\end{ex}

\subsection{Regular unipotent elements}\label{s:reg-uni}

We consider the conjecture when $u \in G^\vee$ is a regular unipotent element. If $u$ is regular unipotent, then $\Gamma_u = \rZ_{G^\vee}$ and every pair $(s, h) \in \Gamma_u^2$ is essentially elliptic (cf. Example \ref{ex-pairs}). For any $s \in \rZ_{G^\vee}$, the component group $A_{\Gamma_u}(s)$ is equal to $\rZ_{G^\vee}/\rZ_{G^\vee}^\circ$. Let $\bar{s}$ be the image of $s$ in this component group, and let $\chi_{\bar{s}}$ be the corresponding weakly unramified character. In $\Ress(G)_{\Psi(G)}$, we have
\begin{equation*}
\Pi(u, s, h) = \sum_{\rho \in \widehat{A}_{\Gamma_u}(s)} \rho(h)\pi(u, 1, \rho) \otimes \chi_{\bar{s}}= \sum_{\rho \in \widehat{A}_{\Gamma_u}(s)} \rho(h)(\mathsf{St}_{G_\rho} \otimes \chi_{\bar{s}}),
\end{equation*}
where $G_\rho$ is the pure inner twist corresponding to $\rho$, and $\mathsf{St}_{G_\rho}$ is its Steinberg representation. 
We then have
\[\res_{\cpt, \un} (\Pi(u,h,s))= \FT_{\cpt,\un}\circ \res_{\cpt, \un}(\Pi(u,s,h)). \]
by the same argument as in \cite[Section 9.2]{ACR}.
This analysis is enough to prove Conjecture \ref{c:elliptic} for $\GL_n$, as we see in the following example.

\begin{ex}\label{s:GLn-ess}
We describe the essentially elliptic case for $\GL_n$. Let $u \in \GL_n(\bC)$ be a unipotent element corresponding to the partition $(\underbrace{a_1,\dots,a_1}_{r_1},\underbrace{a_2,\dots,a_2}_{r_2},\dots,\underbrace{a_\ell,\dots,a_\ell}_{r_\ell})$. Then $\Gamma_u$ contains essentially elliptic pairs if and only if all $\ell=1$ and $r_1=1$, that is, $u$ is a regular unipotent element. Indeed, we have $\Gamma_u \simeq \prod_{i = 1}^\ell \GL_{r_i}(\bC)$. If $s, h \in \Gamma_u$ are semisimple commuting elements, then $s, h$ are contained in some maximal torus $S_u\supset \rZ_{G^\vee}$ of $\Gamma_u$. Since $S_u$ is contained in $Z_{\Gamma_u}(s, h)$, if $Z_{\Gamma_u}(s, h)/\rZ_{G^\vee}$ is finite then $S_u=\rZ_{G^\vee}$. In this case, $\ell=1$ and $r_1=1$ as claimed. The converse is immediate. Thus Conjecture \ref{c:elliptic} holds when $\mathbf{M} = \GL_n$. We note that in this case
 \[\Ress(\GL_n(F))_{\Psi(\GL_n(F))}=\mathbb C\cdot \mathsf{St}_{\GL_n(F)}\] is one dimensional and $\FT^\vee_\ess$ is the identity. 
\end{ex}

\section{The compact Fourier transform}\label{s:rigid} We now extend the conjecture from the elliptic case. 
Let $\cH(G)=C_c^\infty(G)$ be the Hecke algebra of $G$ of compactly supported locally constant functions. If $K$ is any compact open subgroup of $G$, we may embed $\cH(K)=C_c^\infty(K)\hookrightarrow \cH(G)$ by extending the functions by zero on $G\setminus K$. Let $\cH(G)_c$ be the subspace of $\cH(G)$ spanned by the images of these embeddings as $K$ ranges over the set of maximal compact open subgroups of $G$. Let $\overline \cH(G)$ be the cocenter of $\cH(G)$, that is the quotient of $\cH(G)$ by the subspace spanned by the functions $f-~^gf$, where $f\in \cH(G)$, $g\in G$. It is known that this is the same as $\cH(G)/[\cH(G),\cH(G)]$. Let $\overline\cH(G)_c$ denote the image of $\cH(G)_c$ in $\overline\cH(G).$ Also set $\overline \cH(K)=\cH(K)/[\cH(K),\cH(K)]$.

If $K$ is a maximal compact open subgrop of $G$, the linear map $\cH(K)\hookrightarrow \cH(G)_c\twoheadrightarrow \overline\cH(G)_c$ factors through $i_K:\overline\cH(K)\to \overline \cH(G)_c$. It is clear that the resulting map 
\[
i:\bigoplus_K \overline\cH(K)\to \overline \cH(G)_c,\quad i=\sum_K i_K,
\]
where $K$ ranges over the (finite) set of representatives of the conjugacy classes of maximal compact open subgroups of $G$, is surjective. Since $K$ is a compact group, we may identify, if convenient, $R(K)\cong \overline\cH(K)$. 

If $\pi$ is any smooth admissible representation of $G$, let $\Theta_\pi:\cH(G)\to\bC$ be its character. Since the character if $G$-invariant, we may regard $\Theta_\pi$ as a linear functional $\Theta_\pi:\overline\cH(G)\to\bC$. Denote by $\Theta^c_\pi$ its restriction to $\overline\cH(G)_c$.

The same definitions apply to every inner twist $G'$ of $G$. If $D$ is a finite $\bZ$-linear combination of irreducible representations of the various inner twists $G'$, denote by $\Theta_D$ the corresponding linear combination of characters and by $D_c$ the restriction to $\oplus_{G' \in \InnT^p(G)} \overline\cH(G)_c$ as above.

\subsection{The restriction to compact elements} Recall the spaces $\CC[\cY(\Gamma_u)/_\sim]^{\Gamma_u}$ defined in Section \ref{s:ell-pairs}, particularly Corollary \ref{cor-cpt-ell}. For every compact pair $(s,h)\in \cY(\Gamma_u)/_\sim$, define $\Pi(u,s,h)$ as in (\ref{eqn:pi_ush}), and let $\Pi(u,s,h)_c$ be the restriction to $\oplus_{G' \in \InnT^p(G)} \overline\cH(G)_c$. First we note that $\Pi(u, s, h)_c$ does not depend on choice of representative for the $\sim$-equivalence class of $(s, h)$.

\begin{prop}
Suppose $(s, h), (s', h') \in \cY(\Gamma_u)$ with $(s, h) \sim (s', h')$. Then $\Pi(u, s, h)_c = \Pi(u, s', h')_c$.
\end{prop}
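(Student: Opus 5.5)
Since $\sim$ is generated by $\sim_L$ and $\sim_R$, it suffices to treat a single step of each. For $\sim_L$ there is nothing new: as noted after (\ref{eqn:pi_ush}), if $(s,h)\sim_L(s,h')$ then by Lemma \ref{l:ell-comp}(1) the images of $h,h'$ in $A_{\Gamma_u}(s)$ are conjugate. Hence $\rho(h)=\rho(h')$ for every $\rho$, and $\Pi(u,s,h)=\Pi(u,s,h')$ already before restriction. So the whole content is the $\sim_R$ case. Suppose $\gamma_1 s\gamma_1^{-1}=s't$ with $\gamma_1\in\rZ_{\Gamma_u}(h)$ and $t\in T$, a maximal torus of $\rZ_{\Gamma_u}(s,h)$. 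Replacing $(s,h)$ by its $\Gamma_u$-conjugate $(\gamma_1 s\gamma_1^{-1},h)$ leaves $\Pi$ unchanged, since the enhanced parameters are conjugated simultaneously. So we may assume $s=s't$ with $t$ in a torus commuting with $s',h$.

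Next I would use the map $\kappa$ and Lemma \ref{l:ind-Pi} to move to a Levi subgroup. Let $M^\vee=\rZ_{G^\vee}(T)$. By Lemma \ref{l:levi-center}, $T=\rZ^\circ_{M^\vee}$, and $(s,h)$ is essentially $M^\vee$-elliptic in $\Gamma_u^{M^\vee}$. Since $s'=st^{-1}$ with $t\in\rZ_{M^\vee}^\circ$, the same holds for $(s',h)$, because $\rZ_{\Gamma_u^{M^\vee}}(s',h)=\rZ_{\Gamma_u^{M^\vee}}(s,h)$. Lemma \ref{l:ind-Pi} then gives
\[
\Pi(u,s,h)=i_M^G\,\Pi^M(u,s,h),\qquad \Pi(u,s',h)=i_M^G\,\Pi^M(u,s',h).
\]
Here one must first arrange finite order. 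The equivalence class contains compact representatives by the remark after Lemma \ref{l:equiv-ess}, and I would either choose $t$ so that $s'$ has finite order or extend Lemma \ref{l:ind-Pi}, whose proof via \cite[Lemma 5.9]{So1} does not really use finiteness. Also $A_{\Gamma_u^{M^\vee}}(s)=A_{\Gamma_u^{M^\vee}}(s')$, since the centralizers in $\Gamma_u^{M^\vee}$ coincide. Lemma \ref{l:center-twist}, applied in $M$ with $z=t\in\rZ_{M^\vee}^\circ$, then yields $\pi^M(u,s,\phi)=\pi^M(u,s',\phi)\otimes\chi_t$ for an unramified character $\chi_t$ of $M_\phi$. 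Summing, $\Pi^M(u,s,h)=\Pi^M(u,s',h)\otimes\chi_t$ termwise on each pure inner twist.

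It remains to show that $i_M^G(\sigma)$ and $i_M^G(\sigma\otimes\chi)$ have the same restriction to $\overline\cH(G')_c$ for unramified $\chi$. This is the step I expect to be the main obstacle, though it is essentially standard. The cocenter $\overline\cH(G')_c$ is spanned by the images of $\overline\cH(K)$ for maximal compact $K$. So it suffices that the characters agree on functions supported in some $K$, that is, that $\res_K$ (or just the $K$-restriction) of the two induced representations coincide. Mackey's formula (\ref{e:kutzko}), or Proposition \ref{p:par-ind}, expresses $i_P^{G'}(\sigma)|_K$ via the restrictions $\sigma^g|_{K\cap M^g}$. Since $\chi^g$ is trivial on the compact group $K\cap M^g$, the two restrictions agree as $K$-representations. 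Note that the induced representations themselves need not be isomorphic, but their $K$-characters agree for every compact $K$. This gives $\Pi(u,s,h)_c=\Pi(u,s',h)_c$ and completes the $\sim_R$ step.
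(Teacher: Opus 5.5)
Your proposal is correct and follows the paper's proof. The $\sim_L$ step is immediate. For $\sim_R$ you pass to $M^\vee=\rZ_{G^\vee}(T)$ via Lemma~\ref{l:ind-Pi}, use Lemma~\ref{l:center-twist} in $M$ to produce an unramified twist, and note that such twists are invisible on the compact cocenter. Your preliminary conjugation by $\gamma_1$, which lets you apply Lemma~\ref{l:center-twist} inside $M$ with trivial conjugating element and $s'\in M^\vee$, and your Mackey argument for the final step both make explicit what the paper leaves implicit.

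One caution about your finite-order paragraph. Lemma~\ref{l:ind-Pi} genuinely needs $s$ compact: it fails for non-tempered parameters, for example with $M=T$ in $\PGL_2(F)$ at the parameter of the trivial representation. So meet its hypothesis by working with compact (finite-order) representatives, as the paper implicitly does, rather than by extending the lemma.
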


\begin{proof}
The relation $\sim$ is generated $\sim_R$ and $\sim_L$. As discussed below (\ref{eqn:pi_ush}), if $(s, h) \sim_L (s, h')$, then $\Pi(u, s, h) = \Pi(u, s, h')$. Now assume $(s, h) \sim_R (s', h)$. We have $\gamma s \gamma^{-1} = s'z$ for some $\gamma \in \rZ_{\Gamma_u}(h)$ and $z \in S^\vee$ a maximal torus in $\rZ_{\Gamma_u}(s, h)$. Let $M^\vee = \rZ_{G^\vee}(S^\vee)$, let $M$ be the corresponding Levi of $G$, and choose a parabolic of $G$ with Levi component $M$.
By Lemma \ref{l:ind-Pi}, we have $\Pi(u, s, h) = i_P^G \Pi^M(u, s, h)$, so it suffices to show $\Pi^M(u, s, h)_c = \Pi^M(u, s', h)_c$. Let $\varphi \in \hat{A}_{\Gamma_u^{M^\vee}(s)}$. Since $z \in \rZ_{M^\vee}^\circ$, by 
 Lemma \ref{l:center-twist} we have $\pi^{M_\varphi}(u, s, \varphi) = \pi^{M_\varphi}(u, s', \varphi^\gamma) \otimes \chi$ for some unramified character $\chi$ of $M_\varphi$. 
 The result follows. 
\end{proof}

\subsection{The compact representation space}\label{s:comact-space}

Let 
 \begin{equation}
 \Rcpt(G)=\text{span}\langle \Pi(u,s,h)_c\mid u\in G^\vee_\un/G^\vee,\ (s,h)\in \cY(\Gamma_u)/_\sim\rangle.
 \end{equation}

Note that since every $\sim$-equivalence class in $\cY(\Gamma_u)$ has a representative $(s_0, h_0)$ with $s_0$ and $h_0$ of finite order, we may choose a lift $\Pi(u, s_0, h_0)$ of each $\Pi(u, s, h)_c$ such that  
 all of the irreducible representations
 that occur in the virtual linear combination $ \Pi(u,s_0,h_0)$ are tempered. 

 To understand this space, we recall the notion of rigid/compact representations from \cite{CH2} and the connection with the theory of Hopf systems defined in \cite{BDK} and refined in \cite{Da}. In {\it loc. cit.}, the results are stated for the full categories of representations of the $p$-adic group, but since the category of unipotent representations of $G$ is a direct summand of the full category of smooth representations of $G$, we state the results directly in the unipotent setting. Let $G'$ be a pure inner twist of $G$. Following \cite[\S6.7]{CH2}, see also \cite[\S3]{Ci-inv}, define the compact quotient 
  \begin{equation}
  \overline R(G')_{\un,c}= R_\un(G')/ R(G')_{\un,\mathrm{diff}};
  \end{equation}
here $R(G')_{\un,\mathrm{diff}}\subset  R_\un(G')$ is the $\CC$-span of $i_{M'}^{G'}(\sigma)-i_{M'}^{G'}(\sigma\otimes\chi)$, where $M'$ ranges over the set of standard Levi subgroups of $G'$, $\sigma\in R_\un(M')$, and $\chi$ is an unramified character of $M'$.

We now work with the relevant spaces for $G$; the same discussion applies to every inner twist $G'$. 
Let $\overline \cH(G)_{\un,c}$ denote the (unipotent) compact cocenter, which is the subspace of $\overline\cH(G)_c$ generated by the image of the unipotent representations. The final space of interest is the Grothendieck group $K(G)_{\un}$ of finitely generated (unipotent) projective $G$-representations. There are the following known relations between them:
\begin{enumerate}
    \item (Abstract Selberg principle). The Hattori--Stallings rank map $\mathsf{rk}_{HS}: K(G)_{\un}\to \overline \cH(G)_{\un,c}$ is a linear isomorphism. See \cite[Theorem 1.6]{Da}, also \cite[\S7]{CH2}.
    \item (Rigid/Compact trace Paley--Wiener Theorem). The trace map
\begin{equation}\label{e:tr-c}
\tr_c: \overline\cH(G)_{\un,c}\longrightarrow \overline R(G)_{\un,c}^*, \ [f]\mapsto (\pi\mapsto \tr(f, \pi)).
\end{equation}
is surjective. See \cite[Proposition 6.10]{CH2} (and \cite[Corollary 3.7]{Ci-inv}). Kazhdan's Density Theorem \cite{Kaz} can be used to conclude that $\tr_c$ is injective. This gives a perfect duality between $\overline R(G)_{\un,c}$ and $\overline \cH(G)_{\un,c}$ via $\tr_c$.
\end{enumerate}

Let $\mathcal A(G)$ denote any one of $\overline R(G)_{\un,c}$, $\overline \cH(G)_{\un,c}$, or $K(G)_{\un}$. Together with the parabolic induction and restriction maps, $\mathcal A$ is a Hopf system on $G$ in the sense of \cite[Definition 2.2]{Da}. If $M<N$ are standard Levi subgroups of $G$, let $i_M^N$ and $r_N^M$ denote the linear maps given by induction and restriction, respectively, for $\mathcal A$. They are compatible via the $\mathsf{rk}_{HS}$ and dual via $\tr_c$, as expected.

For a standard Levi subgroup $M$, let $Z_M$ be the center of $M$, and let $d(M)=\dim Z_M$. Let $N_M=\{w\in W_M\backslash W_G/W_M\mid w W_M w^{-1}=W_M\}.$ Define two filtrations on $\mathcal A(G)$:
\begin{enumerate}
    \item $\mathcal A_i(G)=\sum_{d(M)>i}\im i_M^G$;
    \item $\mathcal A^i(G)=\cap_{d(M)>i}\ker r_G^M$.
\end{enumerate}
Notice that $\{\mathcal A_i(G)\}$ is decreasing and $\{\mathcal A^i(G)\}$ is increasing. For each $d$, choose an ordering of the set of standard Levi subgroups $M$ with $d(M)=d$ and define, with respect to this ordering, the BDK-operators
\[
T_d=\prod_{M, d(M)=d} (i_M^G\circ r_G^M-|N_M|).
\]
and
\[
A_d=T_{d_0}\circ T_{d_0-1}\circ\dots\circ T_d\text { and } A^d=T_{d}\circ T_{d+1}\circ\dots\circ T_{d_0},
\]
where $d_0$ is the (split) rank of $G$. 
Set $A_I=A_{d(G)}$ and $A^R=A^{d(G)}$. The behaviors of $A_d$ and $A^d$ are dictated by the following two properties that are known to hold for our three examples of $\mathcal A$ by \cite[Corollaries 4.21, 4.23]{Da}:
\begin{enumerate}
    \item[(A-ind)] $i_{w(M)}^G\circ w=i_M^G$, $w\in W_G$;
    \item[(A-res)] $r_G^{w(M)}=w\circ r_G^M$, $w\in W_G$.
\end{enumerate}
Hence, by \cite[Proposition 2.5]{Da}, for every $d$,
\begin{equation}
    \ker A_d=\mathcal A_d(G),\quad \im A^d=\mathcal A^d(G), \quad A_d=A^d\text{ and } \mathcal A(G)=\mathcal A_d(G)\oplus \mathcal A^d(G).
\end{equation}
One can also consider the associated graded objects $\mathsf{gr}_*\mathcal A(G)$ and $\mathsf{gr}^*\mathcal A(G)$ with the associated induction/restriction maps; these are also Hopf systems. 

Let $\cL$ be the set of standard Levi subgroups of $G$. We have the following theorem.

\begin{thm}[{\cite[Theorem 4.25]{Da}}]\label{t:dat} There is an isomorphism of Hopf systems between $\mathcal A(G)$ and the trivial Hopf system $\mathfrak H(M\mapsto \overline{R}_\un(M)_{\Psi(M)})$ (see \cite[\S2.7]{Da}) . In particular, 
    \[
    \mathcal A(G)\cong \mathsf{gr}_*\mathcal A(G)\cong \mathsf{gr}^*\mathcal A(G)\cong \left(\bigoplus_{M\in \cL} \overline R_{\un}(M)_{\Psi(M)} \right)/ \sim_{G}.
    \]
    Here $\sim_G$ is the equivalence relation given by conjugation.
\end{thm}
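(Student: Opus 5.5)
The plan follows Dat's framework of Hopf systems, using the BDK operators and the two filtrations introduced just above.

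\emph{Step 1: reduce to the graded objects.} By \cite[Proposition 2.5]{Da}, properties (A-ind) and (A-res) give, for every $d$, $\mathcal A(G)=\mathcal A_d(G)\oplus\mathcal A^d(G)$ with $A_d=A^d$. Iterating over $d$ splits $\mathcal A(G)$ into the graded pieces $\mathsf{gr}_d\mathcal A(G)=\mathcal A_{d-1}(G)/\mathcal A_d(G)$. The same splitting identifies these pieces with $\mathsf{gr}^d\mathcal A(G)$. Hence it suffices to identify each $\mathsf{gr}_d\mathcal A(G)$ and to check that the identifications respect induction and restriction.

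\emph{Step 2: identify each graded piece.} I would show that the sum of the maps $i_M^G$ induces an isomorphism
\[
\mathsf{gr}_d\mathcal A(G)\cong\Bigl(\bigoplus_{d(M)=d}\mathsf{gr}_{d}\mathcal A(M)\Bigr)/\sim_G .
\]
Here $\mathsf{gr}_{d(M)}\mathcal A(M)$ is the top piece for $M$, namely $\mathcal A(M)$ modulo the images of proper inductions.
\begin{itemize}
\item Surjectivity is immediate from the definition of $\mathcal A_{d-1}(G)$ as a sum of images of $i_M^G$.
\item For injectivity, I compute $r_G^{M'}\circ i_M^G$ for $d(M)=d(M')=d$ using the Mackey formula, which is part of the Hopf-system axioms. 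Modulo terms induced from Levis $L$ with $d(L)>d$, only the $w\in N_M$ with $w(M)=M'$ survive.
\item Consequently $T_d$ acts on the top piece as a nonzero scalar multiple of the averaging over $N_M$. This gives injectivity on $W_G$-coinvariants.
\end{itemize}

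\emph{Step 3: compute the top piece for each $\mathcal A$.} For $\mathcal A=\overline R_{\un,c}$, the top piece of $M$ is $R_\un(M)$ modulo proper inductions and modulo differences $\sigma-\sigma\otimes\chi$ with $\chi\in\Psi(M)$. The latter include the case of $M$ itself, since $M$ is a standard Levi of itself. This is exactly $\overline R_\un(M)_{\Psi(M)}$. For the other two choices of $\mathcal A$, transport the statement along the duality $\tr_c$ and the isomorphism $\mathsf{rk}_{HS}$, which are compatible with $i$ and $r$.

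\emph{Step 4: assemble.} The compatibilities of Step 2 make the resulting identification an isomorphism with the trivial Hopf system $\mathfrak H(M\mapsto\overline R_\un(M)_{\Psi(M)})$.

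The main obstacle is the Mackey computation in Step 2. One needs the nondegeneracy of $T_d$ on graded pieces, that is, that the constant $|N_M|$ matches the number of surviving Weyl double cosets. This is precisely where (A-ind) and (A-res) enter. Here I would rely on \cite[Corollaries 4.21, 4.23]{Da} together with \cite[Proposition 2.5]{Da}.
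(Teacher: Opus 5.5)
Your outline is correct and follows the paper's route. The paper simply invokes Dat for this statement: the splitting $\mathcal A(G)=\mathcal A_d(G)\oplus\mathcal A^d(G)$ from \cite[Proposition 2.5]{Da}, the identification of $\mathsf{gr}_*\mathcal A(G)$ with the trivial Hopf system on the top pieces $\mathcal A(M)/\mathcal A_I(M)$ (your Step 2, i.e.\ \cite[Proposition 2.10(i)]{Da}), and the observation $\overline R(M)_{\un,c}/(\overline R(M)_{\un,c})_I\cong \overline R_\un(M)_{\Psi(M)}$ (your Step 3).

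One correction to Step 2: injectivity comes from the norm map, not from $T_d$. On top pieces, $r_G^{M'}\circ i_M^G$ induces the norm map $\sum_{w(M)=M'} w$, which is injective on coinvariants in characteristic zero. The role of $T_d$, namely the matching of $|N_M|$ with the surviving double cosets, belongs only to the splitting in Step 1.
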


    We are primarily interested in the following statement. 
    \begin{cor}
    \begin{equation}\label{e:gr}
    \mathsf{gr}_*\overline R(G)_{\un,c}\cong \left(\bigoplus_{M\in \cL} \overline R_{\un}(M)_{\Psi(M)} \right)/ \sim_{G}.
    \end{equation}
    \end{cor}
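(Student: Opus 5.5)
This is the special case $\mathcal A(G)=\overline R(G)_{\un,c}$ of Theorem \ref{t:dat}. The plan is to check that $\overline R(G)_{\un,c}$, with parabolic induction and Jacquet restriction, is a Hopf system satisfying (A-ind) and (A-res), and then read off the middle isomorphism of Theorem \ref{t:dat}.

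\textbf{Hopf system structure.} Induction $i_M^N$ descends to the compact quotients because of transitivity of induction: $i_N^G(i_{M'}^{N}(\sigma)-i_{M'}^N(\sigma\otimes\chi))$ is again a difference spanning $R(G)_{\un,\mathrm{diff}}$. Restriction $r_N^M$ descends by the geometric lemma. Each term of $r_N^{M}\circ i_{M'}^N(\sigma)$ is an induced representation from a Levi of $M$. Twisting $\sigma$ by an unramified character $\chi$ replaces that term by the same induced representation of a $w$-conjugate of $\sigma$, twisted by the unramified character $w\chi$ restricted to the smaller Levi. So differences map to differences. These are the facts used in \cite{Da} and \cite[\S6.7]{CH2}. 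Properties (A-ind) and (A-res) are recorded for $\overline R(G)_{\un,c}$ in \cite[Corollaries 4.21, 4.23]{Da}, as stated above.

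\textbf{Applying the theorem.} With these inputs, \cite[Proposition 2.5]{Da} gives the splitting $\mathcal A(G)=\mathcal A_d(G)\oplus\mathcal A^d(G)$ for each $d$. Theorem \ref{t:dat} then gives $\mathsf{gr}_*\mathcal A(G)\cong\bigl(\bigoplus_{M\in\cL}\overline R_\un(M)_{\Psi(M)}\bigr)/\sim_G$. Concretely, the graded piece $\mathcal A_{d-1}(G)/\mathcal A_d(G)$ is the sum over Levis $M$ with $d(M)=d$ of the images of $i_M^G$ applied to $\overline R_\un(M)_{\Psi(M)}$, modulo $W_G$-conjugacy.

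I would make two sanity checks here. First, for a single Levi $M$, the top graded piece of $\overline R(M)_{\un,c}$ is $R_\un(M)$ modulo proper induced representations and modulo unramified twists, which is exactly $\overline R_\un(M)_{\Psi(M)}$. Second, the identifications for conjugate Levis are glued by $w$, using (A-ind).

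\textbf{Main obstacle.} The only real point is the unipotent restriction. The results of \cite{Da,CH2} are stated for the full category of smooth representations, so one must check that the unipotent part is a direct summand of the Hopf system. I would argue this via the Bernstein decomposition: the unipotent blocks are preserved by $i_M^G$ and $r_G^M$, since supercuspidal supports are preserved. Hence all the BDK-operators $T_d$ restrict to the unipotent summand, and the statement follows from Theorem \ref{t:dat} restricted to that summand.
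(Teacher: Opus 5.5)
Your proposal is correct and matches the paper's primary justification: the corollary is simply Theorem \ref{t:dat} specialized to $\mathcal A(G)=\overline R(G)_{\un,c}$, and your sanity check on the top graded piece is exactly the paper's final identification $\overline R(G)_{\un,c}/(\overline R(G)_{\un,c})_I\cong \overline R_\un(G)_{\Psi(G)}$. The paper also gives a lighter route: (A-ind) alone, which follows from \cite[Lemma 5.4(iii)]{BDK}, together with the formal computation \cite[Proposition 2.10(i)]{Da}, already yields the isomorphism, so you do not actually need (A-res), \cite[Proposition 2.5]{Da}, or your verification that restriction descends.
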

   \begin{proof}
        This is a particular instance of the stronger result in Theorem \ref{t:dat}. 
        
        It can also be obtained more easily from Dat's results, as follows. Firstly, notice that $\overline R(G)_{\un,c}$ satisfies property (A-ind); this is immediate from the well-known fact that $R(G)$ has this property \cite[Lemma 5.4(iii)]{BDK}. Then \cite[Proposition 2.10(i)]{Da}, which is a formal calculation, already shows that $\mathsf{gr}_*\overline R(G)_{\un,c}$ is isomorphic to the trivial Hopf system $\mathfrak H(M\mapsto \mathcal A(M)/\mathcal A_I(M))$, with $\mathcal A=\overline R(G)_{\un,c}$.
        
          Finally, to obtain (\ref{e:gr}), we just need to note that 
          \[\overline R(G)_{\un,c}/(\overline R(G)_{\un,c})_I\cong  \overline R_{\un}(G)_{\Psi(G)}.\] 
         \end{proof}

    Moreover, this implies that the induction map 
    \[
    \oplus_M i_{M}^{G}: \left(\bigoplus_{M\in \cL} \overline R_{\un}(M)_{\Psi(M)} \right)/ \sim_{G}\longrightarrow \overline R(G)_{\un,c}
    \]
    gives a section of this isomorphism.
We apply this to all pure inner twists of $G$ and use Theorem \ref{t:ess}. Set $\overline R(G)^p_{\un,c}=\oplus_{G' \in \InnT^p(G)} \overline R(G')_{\un,c}$. This leads to the linear isomorphism  $\ind_c=\oplus_{M} i_{M}^{G}$ 
(cf. (\ref{r:par-ind-twists}))
\begin{equation}\label{e:ind-c}
    \ind_c: \bigoplus_{M\in \cL} \Ell_{\un}^p(M)_{\Psi(M)} / \sim_{G}\longrightarrow \overline R(G)^p_{\un,c}.
\end{equation}

\begin{prop}\label{p:cpt-space}
There exist natural isomorphisms 
\[\overline R(G)^p_{\un,c}\cong \Rcpt(G)\cong  \bigoplus_{M\in \cL}  \Ell(M^\vee)/(\rZ_{M^\vee}^\circ \times \rZ_{M^\vee}^\circ)/\sim_{G^\vee}\cong\bigoplus_{u\in G^\vee_\un/G^\vee}\CC[\cY(\Gamma_u)/_\sim]^{\Gamma_u}.\]
\end{prop}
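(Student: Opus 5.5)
The plan is to build the chain of isomorphisms from right to left, using results already established, and then identify $\Rcpt(G)$ with the image of the resulting composite.

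\textbf{The dual side.} The rightmost isomorphism is Corollary \ref{cor-cpt-ell}. First identify the right-hand side of that corollary with $\bigoplus_{M\in\cL}\Ell(M^\vee)/(\rZ_{M^\vee}^\circ\times\rZ_{M^\vee}^\circ)/\sim_{G^\vee}$. Standard Levi subgroups $M$ of $G$ up to $G$-conjugacy correspond to Levi subgroups $M^\vee$ of $G^\vee$ up to $G^\vee$-conjugacy, since $G$ is split. Passing to the quotient by the residual action of $N_{G^\vee}(M^\vee)$ (this is the meaning of $\sim_{G^\vee}$) matches the invariants under $N_{\Gamma_u}(M^\vee)$ in Corollary \ref{cor-cpt-ell}. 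So this step is bookkeeping.

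\textbf{The representation side.} The middle isomorphism comes from Corollary \ref{c:ess-ell}, applied to every standard Levi $M$: it gives $\Ell^p_\un(M)_{\Psi(M)}\cong \Ell(M^\vee)/(\rZ_{M^\vee}^\circ\times\rZ_{M^\vee}^\circ)$ via $\tau_M$. I need $\tau_M$ to be compatible with the conjugation action of $W$ on Levis, which follows from the naturality of the LLC under conjugation. Composing with the isomorphism $\ind_c$ of (\ref{e:ind-c}) then gives
\[
\Phi\colon \bigoplus_{M}\Ell(M^\vee)/(\rZ_{M^\vee}^\circ\times\rZ_{M^\vee}^\circ)/\sim_{G^\vee}\xrightarrow{\ \sim\ }\overline R(G)^p_{\un,c}.
\]
By construction, $\Phi$ sends the class of an essentially elliptic pair $(M^\vee,u,s,h)$ to the image of $i_M^G\Pi^M(u,s,h)$. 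Lemma \ref{l:ind-Pi} identifies this with $\Pi^G(u,s,h)$, after choosing a representative with $s$ of finite order. Combined with the bijection $\kappa$ of Proposition \ref{prop-map-on-sets}, the composite sends a basis element $(u,s,h)$ of $\bigoplus_u\CC[\cY(\Gamma_u)/_\sim]^{\Gamma_u}$ to the class of $\Pi(u,s,h)$ in $\overline R(G)^p_{\un,c}$.

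\textbf{Identifying $\Rcpt(G)$.} It remains to identify $\overline R(G)^p_{\un,c}$ with $\Rcpt(G)$. By the compact trace Paley--Wiener theorem together with Kazhdan density, $\tr_c$ gives a perfect pairing between $\overline R(G)_{\un,c}$ and $\overline\cH(G)_{\un,c}$. Hence the map $D\mapsto D_c$ (restriction of characters to the compact cocenter) factors through $\overline R(G)^p_{\un,c}$ and is injective on it. Its image is spanned by the $\Pi(u,s,h)_c$, since the classes of the $\Pi(u,s,h)$ span $\overline R(G)^p_{\un,c}$ by the previous paragraph. This image is exactly $\Rcpt(G)$. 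One should also check that $D\mapsto D_c$ kills $R(G')_{\un,\mathrm{diff}}$, which is the content of the Proposition preceding this subsection together with the trace formula for induced representations.

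\textbf{Expected obstacle.} The main difficulty is the compatibility check in the middle step. One must verify that $\ind_c\circ\tau^{-1}$ really corresponds to $\kappa$, including the matching of the quotient by $\rZ_{M^\vee}^\circ\times\rZ_{M^\vee}^\circ$ with the coinvariants under $\Psi(M)$. One must also check that $\Pi^G(u,s,h)_c$ is independent of the choices of Levi and parabolic, using (A-ind) and Lemma \ref{l:center-twist}.
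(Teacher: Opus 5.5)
Your proposal is correct and is essentially the paper's argument: Corollaries \ref{c:ess-ell} and \ref{cor-cpt-ell} (through Lemma \ref{l:ind-Pi} and $\kappa$) show that the classes of the $\Pi(u,s,h)$ form a basis of the image of $\ind_c$, i.e.\ of $\overline R(G)^p_{\un,c}$. The compact trace Paley--Wiener duality (\ref{e:tr-c}) then makes $D\mapsto D_c$ well defined and injective on this quotient, identifying $\overline R(G)^p_{\un,c}$ with its image $\Rcpt(G)$; note that the vanishing on $R(G')_{\un,\mathrm{diff}}$ is part of $\tr_c$ being well defined, not the content of the proposition on $\sim$-invariance.

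One point to tighten: you only need equivariance of the LLC under $N_G(M)$, for one standard representative $M$ of each conjugacy class of Levi subgroups, which is the case $g\in N_G(M)$ recorded after Hypothesis \ref{h:FT-twist}. You do not need a ``naturality'' relating different conjugate standard Levis, and that stronger compatibility is exactly what the paper later has to assume as Hypothesis \ref{h:FT-twist}.
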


\begin{proof}
  By Corollary \ref{c:ess-ell} and Corollary \ref{cor-cpt-ell}, a basis of the image of $\ind_c$ in (\ref{e:ind-c}) is given by the set $\{\Pi(u,s,h)\mid u\in G^\vee_\un/G^\vee,\ (s,h)\in \cY(\Gamma_u)/_\sim\}$ mod $R(G')_{\un,\mathsf{diff}}$. By the duality between $\overline R(G)^p_{\un,c}$ and $\overline H(G)_{\un}^p$ given by the compact trace Paley--Wiener theorem (\ref{e:tr-c}), it follows that $\{\Pi(u,s,h)_c\mid u\in G^\vee_\un/G^\vee,\ (s,h)\in \cY(\Gamma_u)/_\sim\}$ is also linearly independent and the chain of isomorphisms in the proposition follows. 
\end{proof}

We remark that the inverse 
\begin{equation}\label{e:r-comp}
  \Rcpt(G)\longrightarrow \bigoplus_{M\in \cL} \Ell^p_{\un,\ess}(M)_{\Psi(M)} / \sim_G.
  \end{equation}
of $\ind_c$ may be defined as follows: given a triple $(u, s, h)$ with $\kappa(u, s, h) = (M^\vee, u, s, h)$, let $M$ be a Levi subgroup of $G$ corresponding to $M^\vee$. We then define a map by sending $\Pi(u, s, h)_c$ to $\Pi^M(u, s, h)$ and extending linearly.

 \subsection{Compact Fourier transform}

 \begin{defn}
 The \emph {dual compact (nonabelian) Fourier transform} is the linear map given by
 \begin{equation}
 \FT^\vee_{\cpt}\colon  \Rcpt(G)\to  \Rcpt(G),\quad \FT^\vee_{\cpt}(\Pi(u,s,h)_c)=\Pi(u,h,s)_c.
 \end{equation}
 \end{defn}
Then we may expect the following relation, just like in the case of elliptic representations \cite{ACR}.

  \begin{conj}\label{c:compact} Let $G$ be a connected reductive $F$-split group. The following diagram commutes up to roots of unity:
 \begin{displaymath}\label{e:diagram-cpt}
  \xymatrix@+1pc{
    {\Rcpt(G)} \ar[r]^{\mathrm{FT}^\vee_{\cpt}} \ar[d]_{\res_{\cpt,\un}}
    & {\Rcpt(G)}\ar[d]^{\res_{\cpt,\un}}\\
    {  \cC(G)_{\cpt,\un}} \ar[r]_{\mathrm{FT_{\cpt,\un}}}
    & {  \cC(G)_{\cpt,\un}} }
\end{displaymath}
In other words, for each  unipotent element $u\in G^\vee$, compact pair $(s,h) \in \cY(\Gamma_u)/_\sim$, and maximal compact open subgroup $K$ of $G$, there exists a root of unity $\zeta=\zeta(u,s,h,K)$ such that
\[\res_K (\Pi(u,h,s)_c)=\zeta \cdot (\FT_{\cpt,\un}\circ \res_K)(\Pi(u,s,h)_c).
 \]
 \end{conj}

 \begin{rem}\label{r:conj-cpt}
 Analogous to Conjecture \ref{c:elliptic}, for each maximal compact subgroup $K$ of $G$, we have a corresponding diagram
 \begin{equation}\label{e:diagram-K-cpt}
 \xymatrix@+1pc{
    {\Rcpt(G)} \ar[r]^{\FT^\vee_{\cpt}} \ar[d]_{\res^G_{K}}
    & {\Rcpt(G)}\ar[d]^{\res^G_{K}}\\
    {  \bigoplus_{H \in \InnT(\overline{K})} R_\un(H)} \ar[r]_{\mathrm{FT_{K}}}
    & {  \bigoplus_{H \in \InnT(\overline{K})} R_\un(H)} }
\end{equation} 
 and Conjecture \ref{c:compact} holds if and only if this diagram commutes up to roots of unity for each $K$ (cf. Remark \ref{r:conj-K}).
 \end{rem}

\begin{ex}\label{ex:reg-cpt}
Again, we consider the case when $u = u_{\text{reg}}$ is regular unipotent. In this case $\cY(\Gamma_u)/_\sim = \cY(\rZ_{G^\vee})/(\rZ_{G^\vee}^\circ \times \rZ_{G^\vee}^\circ)$, which is in bijection with the set of pairs in the component group $\rZ_{G^\vee}/\rZ_{G^\vee}^\circ$. The conjecture reduces to Conjecture \ref{c:elliptic}, and by Section \ref{s:reg-uni} we have
\begin{equation*}
\res_K(\Pi(u_{\text{reg}}, h, s)_c) = \FT^\vee_\cpt \circ \res_K (\Pi(u_{\text{reg}}, s, h)_c)
\end{equation*}
for all compact pairs $(s, h)$ in $\cY(\rZ_{G^\vee})$ and all compact open subgroups $K \subset G$.
\end{ex}

 The main goal for the rest of the paper is to show the relation between Conjectures \ref{c:compact} and \ref{c:elliptic}, and in particular to show that
Conjecture \ref{c:compact} follows from Conjecture \ref{c:elliptic} in certain cases.

\begin{ex}\label{ex:111}
Consider the triple $(u, s, h) = (1, 1, 1)$. If we let $T$ be a split maximal torus in $G$, then $(1, 1)$ is essentially elliptic in $T^\vee$, and $\Pi(1, 1, 1) = i_T^G \Pi^T(1, 1, 1) = i_T^G 1_T$. Let $K$ be a maximal compact subgroup of $G$ containing the Iwahori $I$.
By Proposition \ref{p:par-ind}, we have
\begin{equation*}
\res_K \Pi(1, 1, 1) = \sum_g i_{\overline{K \cap T^g}}^{\overline{K}} \res_{K \cap T^g}^{T^g} (1_T)^g.
\end{equation*}
Since $T^g = T$ for all $g \in N_G(T)$ and $\overline{K \cap T}$ is a maximal torus in $\overline{K}$, we have
\begin{equation*}
\res_K \Pi(1, 1, 1) = \sum_g i_{\overline{B}}^{\overline{K}} 1
\end{equation*}
where $\overline{B}$ is a Borel subgroup of $(\overline{K})^\circ$. So for the triple $(1, 1, 1)$, proving Diagram \ref{e:diagram-K-cpt} commutes is equivalent to showing that $i_{\overline{B}}^{\overline{K}} 1 = \Ind_{(\overline{K})^\circ}^{\overline{K}} (i_{\overline{B}}^{(\overline{K})^\circ} 1)$ is fixed by $\FT_K$. 
Note that when $\overline{K}$ is connected, this is simply saying that $i_{\overline{B}}^{\overline{K}} 1=R_{\overline{K\cap T}}^{\overline K}(1)$ (here the notation $R$ is for the Deligne--Lusztig induction), where $\overline{K\cap T}$ is the maximal split $\bF_q$-torus is fixed by Lusztig's Fourier transform. This is well known, see for example \cite[p. 383--384]{Car}. In the non-connected case, the statement is again equivalent to the fact that the non-connected Fourier transform fixes $R_{\overline{K\cap T}}^{\overline K}(1)$, where $R$ is the non-connected extension of the Deligne--Lusztig induction as in \cite[Proposition 1.3]{DM}. This follows for example from \cite[Theorem 5.8 and Proposition 6.4]{DM} where the relation between the characters of the Weyl group of $\overline K$ and the almost-characters is shown to hold just like in the connected case.

\end{ex}

\subsection{The nonabelian Fourier transform and parabolic induction}\label{s:FT-ind}

\begin{hyp}\label{h:FT-comm}
    Suppose $\mathbf{H}$ is a split connected reductive group over $\mathbb F_q$, and let $H = \mathbf{H}(\mathbb{F}_q)$ 
    Let $R_\un(H)$ denote the $\bC$-span of irreducible unipotent $H$-characters and let $\FT_H:R_\un(H)\to R_\un(H)$ be the nonabelian Fourier transform. 

    If $P=MN$ is an $\mathbb{F}_q$-parabolic subgroup of $H$ with Levi subgroup $M$, then
    \[
    \FT_H\circ i_{P}^{H}=i_{P}^{H}\circ \FT_M.
    \]
\end{hyp}

\begin{rem}
Hypothesis \ref{h:FT-comm} is known to hold when $H$ is a classical group, see  \cite[\S6.4]{MW}. 
\end{rem}

\begin{lem}\label{l:ind-res-hyp}
With notation as in Hypothesis \ref{h:FT-comm}, let $r^{M}_{H}: R_\un(H) \to R_\un(M)$ denote the Harish-Chandra restriction functor. Then Hypothesis \ref{h:FT-comm} holds if and only if 
\begin{equation*}
\FT_{M} \circ r^{M}_{H} = r^{M}_{H} \circ \FT_H
\end{equation*}
for all $\mathbb{F}_q$-Levi subgroups $M$.
\end{lem}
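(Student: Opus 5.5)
The plan is to obtain the equivalence by adjunction. I will use the fact that Lusztig's Fourier transform $\FT_H$ is self-adjoint (indeed an isometric involution, with real symmetric matrix) for the standard inner product $\langle\ ,\ \rangle_H$ on $R_\un(H)$, in which the irreducible unipotent characters are orthonormal. The same holds for $\FT_M$ on $R_\un(M)$, since $M$ is again a split connected reductive group over $\Fq$ and its unipotent characters are orthonormal.

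Next, I recall Frobenius reciprocity for Harish-Chandra induction and restriction on unipotent characters: for $\alpha\in R_\un(M)$ and $\beta\in R_\un(H)$,
\[
\langle i_P^H\alpha,\beta\rangle_H=\langle \alpha, r^M_H\beta\rangle_M .
\]
Here I must check that $r^M_H$ maps $R_\un(H)$ into $R_\un(M)$. This holds because the Harish-Chandra restriction of a unipotent character is a combination of unipotent characters, as Harish-Chandra series respect unipotence. Also, $r^M_H$ is independent of the choice of $P$ containing $M$, and it is the adjoint of $i_P^H$ on the unipotent parts.

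Now suppose Hypothesis \ref{h:FT-comm} holds, so that $\FT_H i_P^H=i_P^H\FT_M$. For all $\alpha$ and $\beta$ we compute
\[
\langle \alpha, r^M_H\FT_H\beta\rangle_M=\langle i_P^H\alpha,\FT_H\beta\rangle_H=\langle \FT_H i_P^H\alpha,\beta\rangle_H=\langle i_P^H\FT_M\alpha,\beta\rangle_H=\langle \alpha,\FT_M r^M_H\beta\rangle_M .
\]
Since the form is nondegenerate on $R_\un(M)$, this gives $r^M_H\circ\FT_H=\FT_M\circ r^M_H$. The converse is the identical computation read backwards, using nondegeneracy on $R_\un(H)$. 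Every $\Fq$-parabolic $P=MN$ has an $\Fq$-Levi $M$, and conversely every $\Fq$-Levi is a Levi factor of some $\Fq$-parabolic, since $\bH$ is split. The quantifiers therefore match.

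The main point needing care is the self-adjointness of $\FT$ with respect to the Hermitian form. Lusztig's Fourier matrices are real and symmetric, being built from the pairings $\{(x,\sigma),(y,\tau)\}$ on families, and they square to the identity. Hence $\FT$ is unitary and self-adjoint, and I will cite this. If complex conjugation causes trouble, I will work with the bilinear form instead, for which the symmetry of the Fourier matrix alone suffices.
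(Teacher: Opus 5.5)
Your proposal is correct and follows the paper's argument: Frobenius reciprocity for Harish-Chandra induction and restriction, combined with the self-adjointness of Lusztig's Fourier transform (which the paper checks family by family from the definition, just as your real-symmetric-matrix remark does), followed by nondegeneracy of the pairing. One small inaccuracy: splitness of $\bH$ does not make every $F$-stable Levi subgroup a Levi factor of an $\mathbb{F}_q$-parabolic (nonsplit tori are counterexamples). This is harmless, because $r^M_H$ is only defined for Levi factors of $\mathbb{F}_q$-parabolics, so the quantifiers match by definition.
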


\begin{proof}
Let $\la \cdot, \cdot \ra_M$ be the character pairing for $M$ (and similarly for $H$), and assume Hypothesis \ref{h:FT-comm}. Given $\pi \in R_\un(H)$, one checks that $\la \FT_{M} \circ r^{M}_{H}(\pi), \sigma \ra_M = \la r^{M}_{H} \circ \FT_H(\pi), \sigma \ra_M$ for all $\sigma \in R_\un(M)$. This follows easily from the following two facts: the first is that $\la r_{H}^{M}\pi, \sigma\ra_M = \la \pi, i_{P}^{H} \sigma \ra_H$ for all $\pi \in R_\un(H), \sigma \in R_\un(M)$;
the second is that $\la \FT_H\pi, \pi'\ra_H = \overline{\la \pi, \FT_H\pi' \ra}_H$ for all $\pi, \pi' \in R_\un(H)$ (where the bar indicates complex conjugation). 
To prove the second fact, suppose $\pi$ and $\pi'$ are irreducible. Then $\la \FT_H\pi, \pi' \ra_H$ and $\la \pi, \FT_H\pi' \ra_H$ are both zero if $\pi$ and $\pi'$ are not in the same family. If they are in the same family, then the fact follows easily from the definition of $\FT_H$.
\end{proof}

\begin{lem}\label{l:FT-res-ind}
    Suppose $K$ is a parahoric subgroup of $G$ containing $I$ and that $P = MN$ is a standard parabolic subgroup of $G$. Let $V=i_P^G(\pi_0)$ be a parabolically induced representation. Assume Hypothesis \ref{h:FT-comm} holds for $\overline K$. Then 
    \[
    \FT_{\overline K}\circ\res_K^G(V)=\bigoplus_{g\in ^K\!W^P} i_{\overline {K\cap P^g}}^{\overline K}(\FT_{\overline{K\cap M^g}}\circ\res^{M^g}_{K\cap M^g}(\pi_0^g)).
    \]
\end{lem}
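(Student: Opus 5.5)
The plan is to apply $\FT_{\overline K}$ to the decomposition of Proposition \ref{p:par-ind} and then use Hypothesis \ref{h:FT-comm} to move the Fourier transform past each Harish-Chandra induction. By Proposition \ref{p:par-ind}, since $K$ contains $I$,
\[
\res_K^G(V)=\bigoplus_{g\in {}^K\!W^P} i_{\overline {K\cap P^g}}^{\overline K}\bigl(\res^{M^g}_{K\cap M^g}(\pi_0^g)\bigr).
\]
The map $\FT_{\overline K}$ is linear on $R_\un(\overline K)$, so it distributes over this finite sum. Applied to a single summand, Hypothesis \ref{h:FT-comm} gives
\[
\FT_{\overline K}\circ i_{\overline {K\cap P^g}}^{\overline K}=i_{\overline {K\cap P^g}}^{\overline K}\circ \FT_{\overline{K\cap M^g}} .
\]
Assembling the summands then yields the stated formula.

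To apply the hypothesis, I must check that for each $g\in{}^K\!W^P$ the data fit its setting. Since $K$ is a parahoric, $\overline K$ is the group of $\Fq$-points of a connected reductive group. It is split, because $G$ is split and $\overline K$ is generated by $\bT(\Fq)$ and root subgroups, as described in the proof of Lemma \ref{l:levi-compact}. We may take $g=\dot w\in N_G(T)$ by Lemma \ref{l:decomp}. Then Lemma \ref{l:levi-compact} shows that $\overline{P^g\cap K}$ is an $\Fq$-parabolic subgroup of $\overline K$ with Levi decomposition $(\overline{M^g\cap K})(\overline{N^g\cap K})$. So the pair $(\overline{K\cap P^g},\overline{K\cap M^g})$ is exactly a parabolic/Levi pair to which Hypothesis \ref{h:FT-comm} applies. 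The Levi $\overline{K\cap M^g}$ is itself split, so $\FT_{\overline{K\cap M^g}}$ is Lusztig's transform on a connected group, consistent with the statement.

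I also need that $\res^{M^g}_{K\cap M^g}(\pi_0^g)$ lies in $R_\un(\overline{K\cap M^g})$, so that the Fourier transform is defined on it. Here $K\cap M^g$ is a parahoric of $M^g$ by Lemma \ref{l:levi-compact}(1), and $\pi_0^g$ is unipotent because conjugation preserves unipotence. Parahoric restriction of a unipotent representation has unipotent constituents by the definition of unipotent representations via depth zero types (\cite{LuI}).

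The step I expect to need the most care is identifying the induction appearing in Proposition \ref{p:par-ind} with Harish-Chandra induction on $\overline K$ along $\overline{K\cap P^g}$. This is what makes the hypothesis (stated for Harish-Chandra induction) applicable. The proof of Proposition \ref{p:par-ind} shows that $\bigl(\Ind_{K\cap P^g}^K\pi_0^g\bigr)^{U_K}$ is the inflation of $\pi_0^{g,U_{K\cap M^g}}$ to $\overline{K\cap P^g}$, induced to $\overline K$; that is precisely Harish-Chandra induction. So once that identification is granted, the rest is formal.
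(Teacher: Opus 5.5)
Your proposal is correct and follows the paper's argument. You decompose $\res_K^G(V)$ by a Mackey formula into Harish-Chandra induced pieces, then commute $\FT_{\overline K}$ past each $i_{\overline{K\cap P^g}}^{\overline K}$ using Hypothesis \ref{h:FT-comm}. The paper's one-line proof cites the Hecke-algebra form (\ref{e:mackey}) instead of Proposition \ref{p:par-ind}, but your group-level version fits the statement more directly, and your check via Lemma \ref{l:levi-compact} that each $(\overline{K\cap P^g},\overline{K\cap M^g})$ is an $\Fq$-parabolic/Levi pair makes explicit what the paper leaves implicit.
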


\begin{proof}
    Immediate from (\ref{e:mackey}) and Hypothesis \ref{h:FT-comm}. 
\end{proof}

\begin{lem}\label{l:non-max-K}\footnote{We thank J.-L. Waldspurger for pointing this out to us, and for sketching the proof.}
Assume Diagram \ref{e:diagram-K} commutes (up to roots of unity) for some maximal parahoric $K$ and that Hypothesis \ref{h:FT-comm} holds for $\overline{K}$. Then for any parahoric subgroup $K_0 \subset K$, the corresponding diagram
 \begin{displaymath}
 \xymatrix@+1pc{
    {\Ress(G)_{\Psi(G)}} \ar[r]^{\FT^{\vee}_{\ess}} \ar[d]_{\res^G_{K_0}}
    & {\Ress(G)_{\Psi(G)}}\ar[d]^{\res^G_{K_0}}\\
    {   R_\un(\overline{K}_0)} \ar[r]_{\mathrm{FT_{\overline K_0}}}
    & {  R_\un(\overline{K}_0)} }
\end{displaymath} 
also commutes (up to roots of unity). Here $\res^G_{K_0}$ is parahoric restriction with respect to $K_0$, and $\FT_{\overline K_0}$ is Lusztig's nonabelian Fourier transform for $\overline{K}_0$. 
\end{lem}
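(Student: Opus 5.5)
The natural approach is to factor parahoric restriction to $K_0$ through $K$. Since $K_0 \subset K$ are parahorics and $U_K \subset U_{K_0}$, the image $\overline{K_0}' := K_0/U_K$ is a parabolic subgroup of $\overline K$. Its Levi quotient is $\overline{K_0} = K_0/U_{K_0}$, and $U_{K_0}/U_K$ is its unipotent radical. (Here $\overline K$ is connected, because $K$ is parahoric.) Taking $U_{K_0}$-invariants in two stages then gives, for any admissible $V$,
\[
\res^G_{K_0}(V) = V^{U_{K_0}} = \bigl(V^{U_K}\bigr)^{U_{K_0}/U_K} = r^{\overline{K_0}}_{\overline K}\bigl(\res^G_K(V)\bigr),
\]
where $r^{\overline{K_0}}_{\overline K}$ is Harish-Chandra restriction from $\overline K$ to the Levi $\overline{K_0}$. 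So the first step is to record this identity $\res^G_{K_0} = r^{\overline{K_0}}_{\overline K}\circ \res^G_K$ on Grothendieck groups. To justify it, we identify $\overline{K_0}$ with a standard Levi of $\overline K$ using the root-subgroup description of parahorics from the proof of Lemma \ref{l:levi-compact}. If inner twists of $\overline K$ appear in the target of Diagram \ref{e:diagram-K}, we keep only the component for the split form $\overline K$ that is relevant to $K_0$; the other components play no role for $K_0 \subset K$.

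Next, we apply Lemma \ref{l:ind-res-hyp}. Hypothesis \ref{h:FT-comm} for $\overline K$ is equivalent to $\FT_{\overline{K_0}}\circ r^{\overline{K_0}}_{\overline K} = r^{\overline{K_0}}_{\overline K}\circ \FT_{\overline K}$. Take a basis element $\Pi = \Pi(u,s,h)$ of $\Ress(G)_{\Psi(G)}$. The hypothesis that Diagram \ref{e:diagram-K} commutes gives $\res_K(\Pi(u,h,s)) = \zeta\,\FT_{\overline K}(\res_K \Pi(u,s,h))$. Applying $r^{\overline{K_0}}_{\overline K}$ to both sides yields
\[
\res_{K_0}(\Pi(u,h,s)) = \zeta\, r^{\overline{K_0}}_{\overline K}\FT_{\overline K}\res_K(\Pi(u,s,h)) = \zeta\,\FT_{\overline{K_0}}\res_{K_0}(\Pi(u,s,h)),
\]
which is the claim, with the same root of unity $\zeta$.

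The main obstacle is the first step: showing that $U_{K_0}/U_K$ is exactly the unipotent radical of a parabolic of $\overline K$ whose Levi is $\overline{K_0}$. We also need that taking invariants under a unipotent radical is Harish-Chandra restriction, which holds because we work in characteristic zero and the radical is a $p$-group. We would handle the structural part via the Bruhat--Tits description $K_0 = \langle \bT(\cO), u_\alpha(\varpi^m\cO) : \alpha(x_0)+m\ge 0\rangle$ and the analogous description for $K$, exactly as in Lemma \ref{l:levi-compact}. A secondary point is to check that the root-of-unity and inner-twist bookkeeping of $\FT_K$ on $\bigoplus_{H\in\InnT(\overline K)}R_\un(H)$ restricts correctly to the relevant summand.
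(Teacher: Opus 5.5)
Your proposal is correct and follows the paper's proof essentially verbatim. Like the paper, you write $\res^G_{K_0} = r^{\overline{K}_0}_{\overline K}\circ\res^G_K$ with $\overline K_0$ a Levi of $\overline K$, apply the commutativity for $K$, and then use the restriction form of Hypothesis~\ref{h:FT-comm} (Lemma~\ref{l:ind-res-hyp}) to move $\FT$ past $r^{\overline K_0}_{\overline K}$, keeping the same root of unity $\zeta$; your inner-twist bookkeeping is unnecessary, since $\overline K$ is connected for a parahoric $K$ and so has no nontrivial inner twists.
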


\begin{proof}
Fix a triple $(u, s, h)$ such that $(s, h) \in \cY(\Gamma_u)_{G^\vee-\ess}$. Given a parahoric $K_0 \subset K$, we have that $\overline{K}_0 = K_0/U_{K_0}$ is a Levi subgroup of $\overline{K}$ and $\res^G_{K_0} = r_{\overline K}^{\overline K_0} \circ \res_K^G$. 
By assumption, there exists a root of unity $\zeta$ such that $\res_K^G \circ \FT_\ess^\vee(\Pi(u, s, h)) = \zeta\FT_{\overline{K}} \circ \res_K^G(\Pi(u, s, h))$. We have
\begin{align*}
\res_{K_0}^G \circ \FT_\ess^\vee (\Pi(u, s, h)) &= r_{\overline K}^{\overline K_0} \circ \res_K^G \circ \FT_\ess^\vee (\Pi(u, s, h))
= \zeta r_{\overline K}^{\overline K_0} \circ \FT_{\overline K} \circ \res_K^G(\Pi(u, s, h))\\
&= \zeta \FT_{\overline K_0} \circ r_{\overline K}^{\overline K_0} \circ \res_K^G(\Pi(u, s, h)= \zeta \FT_{\overline K_0} \circ \res_{K_0}^G(\Pi(u, s, h)),
\end{align*}
where we are using the form of Hypothesis \ref{h:FT-comm} in Lemma \ref{l:ind-res-hyp}. 
\end{proof}

To prove the next theorem, we must make an additional hypothesis related to functoriality in the local Langlands correspondence for Levi subgroups. This is a version of ``weak invariance" of the LLC (see \cite[Conjecture 5.2.7]{Hai14}), and it holds in the case when $G = \GL_n$ \cite[Proposition 5.2.6]{Hai14}. 

To explain the hypothesis, let $M$ be a standard $F$-Levi subgroup of $G$, and let $g \in N_G(T)$. Let $c_g: M \to M^g$ be the isomorphism given by conjugation by $g$. Thinking of $M^\vee$ and $(M^g)^\vee$ as subgroups of $G^\vee$, the map $c_g$ induces an isomorphism of dual groups $(M^g)^\vee \to M^\vee$, given by conjugation by an element $g^\vee \in G^\vee$ (cf. \cite[2.1]{Bor}). Let $g' = (g^\vee)^{-1}$, and given $x \in G^\vee$, write $x^{g'}$ for $g'x(g')^{-1}$. The hypothesis is then as follows.

\begin{hyp}\label{h:FT-twist}
With notation as above, for any $(\varphi_{(u, s)}, \rho) \in \Phi^p_{e, \un}(M^\vee)$ such that $M_\rho = M$, we have
\begin{equation*}
(\pi^M(u, s, \rho))^g = \pi^{M^g}(u^{g'}, s^{g'}, \rho^{g'}).
\end{equation*}
\end{hyp}
Note that if $g \in N_G(M)$, then $g$ induces an automorphism of the root datum of $M$, and hence a $W_F$-equivariant automorphism of the Dynkin diagram of $M$. In this case Hypothesis \ref{h:FT-twist} is proved in \cite[Theorem~4.1(iii)]{So1}.

\begin{thm}\label{t:ell-to-com-K}
Assume Hypotheses \ref{h:FT-comm} and \ref{h:FT-twist}, and that Diagram \ref{e:diagram-K} commutes for every Levi subgroup $M$ of $G$. Suppose $K$ is a maximal parahoric subgroup of $G$ containing $I$. Then Diagram \ref{e:diagram-K-cpt} commutes for $K$.
\end{thm}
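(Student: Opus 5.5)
We will work on the basis of $\Rcpt(G)$ and reduce, via parabolic induction, to the essentially elliptic statement for Levi subgroups. Fix $u\in G^\vee_\un$ and a compact pair $(s,h)\in\cY(\Gamma_u)/_\sim$, chosen with $s,h$ of finite order. Let $\kappa(u,s,h)=(M^\vee,u,s,h)$ as in (\ref{e:kappa}). After conjugating we take $M$ to be a standard Levi of $G$ dual to $M^\vee$, with standard parabolic $P=MN$. Since $(s,h)\in\cY(\Gamma_u^{M^\vee})_{M^\vee-\ess}$, Lemma \ref{l:ind-Pi} gives $\Pi(u,s,h)=i_P^G\Pi^M(u,s,h)$. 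The pair $(h,s)$ has the same centralizer $\rZ_{\Gamma_u}(s,h)$, so $\kappa(u,h,s)=(M^\vee,u,h,s)$ with the same $M$, and likewise $\Pi(u,h,s)=i_P^G\Pi^M(u,h,s)$. This compatibility is exactly what makes the flip on $\Rcpt(G)$ correspond under $\ind_c$ (\ref{e:ind-c}) to the flip $\FT^\vee_\ess$ on $\Ress(M)_{\Psi(M)}$. By linearity, and since $\res_K$ kills the unramified twists, it suffices to check the identity for these lifts.

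Next we apply parahoric restriction. By Proposition \ref{p:par-ind} (in its Mackey form (\ref{e:kutzko}), applied summandwise over pure inner twists via Remark \ref{r:par-ind-twists}),
\[
\res_K\Pi(u,h,s)=\bigoplus_{g\in {}^K\!W^P} i_{\overline{K\cap P^g}}^{\overline K}\bigl(\res^{M^g}_{K\cap M^g}(\Pi^M(u,h,s)^g)\bigr).
\]
By Lemma \ref{l:FT-res-ind}, which uses Hypothesis \ref{h:FT-comm} for $\overline K$ and its Levi subgroups, $\FT_K\circ\res_K\Pi(u,s,h)$ is the same sum with $\FT_{\overline{K\cap M^g}}\circ\res_{K\cap M^g}^{M^g}(\Pi^M(u,s,h)^g)$ inside. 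So it suffices to prove, for each $g$, that
\[
\res_{K\cap M^g}^{M^g}(\Pi^M(u,h,s)^g)=\zeta\,\FT_{\overline{K\cap M^g}}\res_{K\cap M^g}^{M^g}(\Pi^M(u,s,h)^g),
\]
with a single root of unity $\zeta$ independent of $g$.

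To obtain this, we use Hypothesis \ref{h:FT-twist} to rewrite $\Pi^M(u,s,h)^g=\Pi^{M^g}(u^{g'},s^{g'},h^{g'})$, where $(s^{g'},h^{g'})$ is essentially $(M^g)^\vee$-elliptic. The same rewriting applies to the flipped triple, so conjugation commutes with the flip. By Lemma \ref{l:levi-compact}, $K\cap M^g$ is a parahoric subgroup of $M^g$, usually not maximal. We choose a maximal parahoric $K_M\supset K\cap M^g$ of $M^g$. The hypothesis that Diagram \ref{e:diagram-K} commutes for $M^g$ at $K_M$, together with Lemma \ref{l:non-max-K} (Hypothesis \ref{h:FT-comm} for $\overline{K_M}$), then gives the required identity at $K\cap M^g$.

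The main obstacle is uniformity of the root of unity. A priori, each $g$ yields its own $\zeta(u,s,h,K_M)$, and these may differ, whereas Lemma \ref{l:non-max-K} only transports a constant from a single maximal parahoric. We expect to handle this in one of two ways. The first is to check that the elliptic conjecture's constant depends only on the triple, as in the computations of \cite{ACR}, where $\zeta$ arises from the families' Fourier matrices. Failing that, we would prove the identity one $K$-component at a time, with $\zeta$ allowed to vary over the summands indexed by ${}^K\!W^P$. A secondary point is that the pure-inner-twist bookkeeping needs care: Hypothesis \ref{h:FT-twist} is stated only when $M_\rho=M$, and the other twists are matched through the injection (\ref{eqn-Inn-Levi}).
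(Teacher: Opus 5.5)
Your skeleton is the paper's: Lemma~\ref{l:ind-Pi}, then Proposition~\ref{p:par-ind} with Lemma~\ref{l:FT-res-ind}, then Hypothesis~\ref{h:FT-twist} to pass to $M^g$, then Lemma~\ref{l:non-max-K} at the parahoric $K\cap M^g$. However, the two points you leave open are exactly where the proof has to be closed, and the ways you suggest for closing them would not work.

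\emph{The root of unity.} This obstacle is not present. The theorem assumes that Diagram~\ref{e:diagram-K} commutes exactly. So Lemma~\ref{l:non-max-K} gives $\FT_{\overline{K\cap M^g}}\res_{K\cap M^g}^{M^g}\Pi^{M^g}(u^{g'},s^{g'},h^{g'})=\res_{K\cap M^g}^{M^g}\Pi^{M^g}(u^{g'},h^{g'},s^{g'})$ on the nose for each $g\in{}^K\!W^P$, and you simply sum over $g$. In an up-to-roots-of-unity setting, neither of your remedies would do:
\begin{itemize}
\item The constant genuinely depends on the compact subgroup; compare the factor $\zeta_d^{2m\lfloor di/n\rfloor}$ for $\SL_n$ in Section~\ref{s:ex-A}. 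So your first option is not available.
\item Letting $\zeta_g$ vary with $g$ proves nothing, since $\sum_g\zeta_gX_g$ is not a scalar multiple of $\sum_gX_g$.
\end{itemize}
This is why the remark after the theorem adds the assumption that the roots of unity for $M$ and all its conjugates $M^g$ coincide.

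\emph{The pure inner twists.} Matching the non-split twists through (\ref{eqn-Inn-Levi}) is not justified. Proposition~\ref{p:par-ind} and the double cosets ${}^K\!W^P$ are set up only for the split group $G$; conjugation by $g\in N_G(T)$ does not act on a Levi $M'$ of a different twist $G'$. Also, Hypothesis~\ref{h:FT-twist} only covers $M_\rho=M$.

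The missing observation is that $K$ is a parahoric. Hence $\overline K$ is connected and $\InnT(\overline K)$ is trivial, so $\res_K$ annihilates $R_\un(G')$ for every $G'\neq G$. In other words, $\res_K\circ i_P^G=\res_K\circ i_P^G\circ p_M$, with $p_M$ the projection onto $R_\un(M)$. Only the $\rho$ with $M_\rho=M$ survive, which is exactly the case Hypothesis~\ref{h:FT-twist} handles.

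The same reasoning applies to the parahoric $K\cap M^g$ of $M^g$. It lets you identify the restriction of the conjugated split part with $\res_{K\cap M^g}^{M^g}\Pi^{M^g}(u^{g'},s^{g'},h^{g'})$, because the non-split terms of the latter are killed as well. With these two observations in place, your argument becomes the paper's proof.
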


\begin{proof}
Let $u$ be a unipotent element of $G^\vee$, and let $(s, h) \in \cY(\Gamma_u)$ be a compact pair. Let $\Pi^M(u, s, h)$ be (a $\sim_G$-representative of) the image of $\Pi(u, s, h)_c$ under the isomorphism (\ref{e:r-comp}). Since the right-hand side of (\ref{e:r-comp}) is a sum over $G$-conjugacy classes of Levis, we may assume $M$ is a standard Levi, and thus we are in the setting of Lemma \ref{l:levi-compact}. For simplicity of notation, let $E$ be the set of all $\rho \in \widehat{A}_{\Gamma_u^{M^\vee}}(s)$ that are $M$-relevant.

 Let $P = MN$ be a parabolic with Levi $M$. 
By Lemma \ref{l:ind-Pi}, we have that $\Pi(u, s, h) = i_P^G \Pi^M(u, s, h)$. 
Since $K$ is parahoric, we have that $\overline K$ is connected and thus has no inner twists. Thus we have that $\res_K^G = \res_K^G \circ p_M$ where $p_M$ is the natural projection $\oplus_{M' \in \InnT^p(M)} R_{\un}(M') \to R_{\un}(M)$. Using this, we have
\begin{align*}
\FT_{\overline K} \circ \res_K^G(\Pi(u, s, h)) &= \FT_{\overline K} \circ \res_K^G i_P^G \Pi^M(u, s, h)\\
&= \sum_{\rho \in E} \rho(h) \FT_{\overline K} \circ \res_K^G i_P^G \pi^M(u, s, \rho),
\end{align*}

By Lemma \ref{l:FT-res-ind} this sum is equal to 
\begin{equation*}
\sum_{\rho \in E} \rho(h)(\sum_{g\in ^K\!W^P} i_{\overline{K \cap P^g}}^{\overline K} (\FT_{\overline{K \cap M^g}} \circ \res_{K \cap M^g}^{M^g} (\pi^M(u, s, \rho))^g)),
\end{equation*}
which is equal to 
\begin{equation}\label{e:big-sum}
\sum_{g\in ^K\!W^P} i_{\overline{K \cap P^g}}^{\overline K} (\FT_{\overline{K \cap M^g}} \circ \res_{K \cap M^g}^{M^g} \sum_{\rho \in E} \rho(h)(\pi^M(u, s, \rho))^g).
\end{equation}
By Hypothesis \ref{h:FT-twist}, we have 
\begin{align*}
\res_{K \cap M^g}^{M^g} \sum_{\rho \in E} \rho(h)\pi^M(u, s, \rho))^g &= \res_{K \cap M^g}^{M^g} \sum_{\rho \in E} \rho(h)\pi^{M^g}(u^{g'}, s^{g'}, \rho^{g'})\\
&= \res_{K \cap M^g}^{M^g} \sum_{\rho \in E}  \rho^{g'}(h^{g'}) \pi^{M^g}(u^{g'}, s^{g'}, \rho^{g'})\\
&= \res_{K \cap M^g}^{M^g} \Pi^{M^g}(u^{g'}, s^{g'}, h^{g'}).
\end{align*}
Here we are using that $K \cap M^g$ is a parahoric in $M^g$, so again $\res_{K \cap M^g}^{M^g}$ is zero on $R_\un(M')$ for all $M' \in \InnT^p(M^g)$ with $M' \neq M^g$. Now
Lemma \ref{l:non-max-K} implies that 
\begin{align*}
\FT_{\overline{K \cap M^g}} \circ \res_{K \cap M^g}^{M^g} \Pi^{M^g}(u^{g'}, s^{g'}, h^{g'}) &= \res_{K \cap M^g}^{M^g} \FT_{\ess}^\vee \Pi^{M^g}(u^{g'}, s^{g'}, h^{g'})\\
 &= \res_{K \cap M^g}^{M^g} \Pi^{M^g}(u^{g'}, h^{g'}, s^{g'}),
\end{align*}
so (\ref{e:big-sum}) is equal to 
\begin{equation*}
\sum_{g\in ^K\!W^P} i_{\overline{K \cap P^g}}^{\overline K}(\res_{K \cap M^g}^{M^g}  \Pi^{M^g}(u^{g'}, h^{g'}, s^{g'})).
\end{equation*}
Again using Hypothesis \ref{h:FT-twist}, this is equal to 
\begin{equation*}
\sum_{g\in ^K\!W^P} i_{\overline{K \cap P^g}}^{\overline K}(\res_{K \cap M^g}^{M^g} ( \Pi^{M}(u, h, s))^g).
\end{equation*}
Proposition \ref{p:par-ind} tells us that this is equal to 
\begin{equation*}
 \res_K^G i_P^G \Pi^M(u, h, s)
= \res_K^G \Pi^G(u, h, s),
\end{equation*}
as desired. 
\end{proof}

\begin{rem}
Note that the same proof shows that if Diagram \ref{e:diagram-K} commutes up to roots of unity for all Levi subgroups $M$ of $G$, then so does Diagram \ref{e:diagram-K-cpt}, as long as the root of unity $\zeta(u, s, h, K \cap M)$ appearing for $M$ is the same as the root of unity $\zeta(u^{g'}, s^{g'}, h^{g'}, K \cap M^g)$ appearing for every twist $M^g$ of $M$. 
\end{rem}

\begin{cor}\label{c:levis-imply-compact}
Assume $G$ has simply connected derived subgroup, and assume Hypotheses \ref{h:FT-comm} and \ref{h:FT-twist}. If Diagram \ref{e:diagram-K} commutes for every Levi subgroup $M$ of $G$ and every maximal compact subgroup $K$ of $M$, then Conjecture \ref{c:compact} holds for $G$.
\end{cor}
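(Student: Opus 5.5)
By Remark \ref{r:conj-cpt}, Conjecture \ref{c:compact} is equivalent to the commutativity (up to roots of unity) of Diagram \ref{e:diagram-K-cpt} for every $K\in\max(G')$ and every $G'\in\InnT^p(G)$. Theorem \ref{t:ell-to-com-K} already gives this when $K$ is a maximal \emph{parahoric} subgroup of $G$ containing $I$. The plan is therefore to show that the hypothesis on the derived subgroup reduces every case to that one.

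First, when $G_\der$ is simply connected, every maximal compact open subgroup of $G$ is a maximal parahoric subgroup. The stabilizer $K$ of a vertex satisfies $K=K_0$, because the Kottwitz map restricted to the stabilizer of a facet lands in $\pi_1(G)_{\mathrm{tors}}$, and this group is trivial when $\pi_1(G)$ is torsion free, which follows from $G_\der$ being simply connected. So $\overline K$ is connected and $\FT_K=\FT_{\overline K}$. Up to conjugacy, each such $K$ contains the fixed Iwahori $I$. Moreover, both sides of the diagram are invariant under $G$-conjugation of $K$, since $\res_{gKg^{-1}}$ is identified with $\res_K$ through the isomorphism $\overline{K}\cong\overline{gKg^{-1}}$, and $\FT$ is compatible with this isomorphism.

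Second, deal with the pure inner twists. The group $\rZ_{G^\vee}$ is connected because it is dual to $\pi_1(G)$ being torsion free. Hence $H^1(F,\bG)\cong\pi_0(\rZ_{G^\vee})^\wedge$ is trivial, so $\InnT^p(G)=\{G\}$. This means $\cC(G)_{\cpt,\un}=\bigoplus_{K\in\max(G)}R_\un(\overline K)$, and only $G$ itself has to be checked.

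Third, apply Theorem \ref{t:ell-to-com-K} to each $K\in\max(G)$ containing $I$, using Hypotheses \ref{h:FT-comm} and \ref{h:FT-twist} together with the assumed commutativity of Diagram \ref{e:diagram-K} for all Levis $M$. One point needs care: the theorem assumes exact commutativity of the Levi diagrams. If those diagrams commute only up to roots of unity, the remark after the theorem requires the roots $\zeta(u,s,h,K\cap M)$ to be constant on $N_G(T)$-twists $M^g$. I would handle this by transporting the relation from $M$ to $M^g$ via Hypothesis \ref{h:FT-twist} together with the conjugation isomorphism $\overline{K\cap M}\cong\overline{K^{g}\cap M^g}$, under which both $\res$ and $\FT$ are equivariant. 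This makes the root of unity the same for every twist.

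The main obstacle is the first step, namely justifying that $K=K_0$ for every maximal compact open subgroup, so that no disconnected $\overline K$ arises and Lemma \ref{l:FT-res-ind} and Lemma \ref{l:non-max-K} apply directly. I would cite Haines–Rapoport: the stabilizer of a facet equals its parahoric when $\pi_1(G)_I$ is torsion free. For split $G$ with $G_\der$ simply connected, $\pi_1(G)=X_*(T)/\text{coroot lattice}$ is torsion free.
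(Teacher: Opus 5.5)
This is essentially the paper's argument. Its proof is the single observation that, because $G_\der$ is simply connected, every maximal compact subgroup is a maximal parahoric, so Theorem~\ref{t:ell-to-com-K} together with Remarks~\ref{r:conj-K} and~\ref{r:conj-cpt} gives Conjecture~\ref{c:compact}. Your Kottwitz-map justification of that fact is a correct elaboration of what the paper leaves implicit. So is your check that $\InnT^p(G)$ is trivial, so that only connected $\overline K$ occur. With the hypothesis read as exact commutativity, as in Theorem~\ref{t:ell-to-com-K}, your first three steps prove the corollary.

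You should drop the claimed upgrade to Levi diagrams that commute only up to roots of unity, because that argument does not work.

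\begin{itemize}
\item In Proposition~\ref{p:par-ind}, the subgroup of $M^g$ that occurs is $K\cap M^g$ with $K$ fixed. It is not $K^g\cap M^g=g(K\cap M)g^{-1}$. So the isomorphism $\overline{K\cap M}\cong\overline{K^g\cap M^g}$ you transport along is the wrong one.
\item Transport via Hypothesis~\ref{h:FT-twist} only identifies $\zeta(u^{g'},s^{g'},h^{g'},K\cap M^g)$ with the root of unity for $M$ attached to the parahoric $g^{-1}Kg\cap M$.
\item That parahoric changes with $g\in{}^K\!W^P$. It need not be $M$-conjugate to $K\cap M$, nor maximal in $M$; in the non-maximal case its root of unity is only defined via Lemma~\ref{l:non-max-K}.
\item Roots of unity genuinely depend on the parahoric. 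Compare the factor $\zeta_d^{2m\lfloor di/n\rfloor}$ in the $\SL_n$ computation.
\end{itemize}

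So nothing forces the $\zeta$'s for different $g$ to agree. This is why the remark after Theorem~\ref{t:ell-to-com-K} imposes their equality as an extra assumption rather than deriving it.
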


\begin{proof}
This follows immediately from Proposition \ref{t:ell-to-com-K} (cf. Remarks \ref{r:conj-K} and \ref{r:conj-cpt}) since every maximal compact subgroup is a maximal parahoric.
\end{proof}

\begin{cor}\label{c:GL-G2}
Assume Hypotheses \ref{h:FT-comm} and \ref{h:FT-twist}. Then Conjecture \ref{c:compact} holds when $\bG$ is $\GL_n$ or $G_2$.
\end{cor}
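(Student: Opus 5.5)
By Corollary \ref{c:levis-imply-compact}, it suffices to check two things for $\bG=\GL_n$ and $\bG=G_2$: that the derived subgroup is simply connected, and that Diagram \ref{e:diagram-K} commutes, up to roots of unity, for every Levi subgroup $M$ of $G$ and every maximal compact subgroup $K$ of $M$. The first point is immediate: $\GL_n$ has derived subgroup $\SL_n$, and $G_2$ is simply connected. So the argument comes down to verifying Conjecture \ref{c:elliptic} for all Levi subgroups.

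For $\GL_n$, every Levi subgroup is a product $M=\prod_i \GL_{n_i}$. The dual Levi is $M^\vee=\prod_i\GL_{n_i}(\bC)$, so a unipotent $u\in M^\vee$ has $\Gamma_u^{M^\vee}=\prod_i\Gamma_{u_i}$. The center-mod-torus condition factors, so the essentially elliptic pairs are products of essentially elliptic pairs. By Example \ref{s:GLn-ess} applied factorwise, $u$ must then be regular unipotent in $M^\vee$. Here $\Gamma_u=\rZ_{M^\vee}$ is connected, so $\cY(\Gamma_u)/(\rZ^\circ\times\rZ^\circ)$ is a single point. Thus $\Ress(M)_{\Psi(M)}$ is spanned by the Steinberg representation and $\FT^\vee_\ess$ is the identity. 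Since $M$ has no nontrivial pure inner twists, the pairs reduce to $(1,1)$. Commutativity of \eqref{e:diagram-K} is then Section \ref{s:reg-uni}: the parahoric restriction of $\St_M$ to $K$ is the Steinberg character of $\overline K$ (or $0$), and this is fixed by Lusztig's Fourier transform.

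For $G_2$, the Levi subgroups up to conjugacy are the split torus $T$, the two maximal Levis $M_\alpha\cong\GL_2$ and $M_\beta\cong\GL_2$ (long and short root), and $G_2$ itself. The torus case is Example \ref{r:torus-ess}, and the two $\GL_2$ Levis are covered by the $\GL_n$ argument above. This leaves Conjecture \ref{c:elliptic} for $M=G_2$. Since $G_2$ is semisimple, adjoint and simply connected, the essentially elliptic space is the elliptic space of \cite{ACR}, where the conjecture was verified for $G_2$ by explicit computation. I would cite that verification, checking it for each maximal compact of $G_2$: the hyperspecial $G_2(\bF_q)$, $\SO_4$-type and $\SL_3$-type parahorics.

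The main obstacle is compatibility with the hypotheses. Corollary \ref{c:levis-imply-compact} needs exact commutativity, or the uniformity of roots of unity across conjugates $M^g$ noted in the Remark after Theorem \ref{t:ell-to-com-K}. For the torus and $\GL$ Levis, the root of unity is $1$, so this is automatic. For $M=G_2$ itself, $^K\!W^P$ is trivial, so only one root of unity per $K$ enters and no uniformity issue arises. I would check these points explicitly rather than rely on them silently.
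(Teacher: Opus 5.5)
Your proposal is correct and follows the paper's proof. The paper also reduces via Corollary \ref{c:levis-imply-compact} to Conjecture \ref{c:elliptic} for the Levi subgroups: tori are handled by Example \ref{r:torus-ess}, $\GL$-type Levis by the fact that only regular unipotents admit essentially elliptic pairs (Example \ref{s:GLn-ess}), and $M=G_2$ by citing the known elliptic result; the paper cites \cite{Ciu} rather than \cite{ACR} for this last point. Your extra check on roots of unity is sound and is something the paper leaves implicit: they are trivial for the torus and $\GL$ Levis, and for $M=G$ there is only a single double coset.
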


\begin{proof}
Assume $G$ is type $G_2$. Conjecture \ref{c:elliptic} has been proven for $G = G_2$ \cite{Ciu}. The other Levi subgroups of $G$ are isomorphic to $\GL_2$ or a torus, so Conjecture \ref{c:compact} follows from Remark \ref{r:torus-ess} and Section \ref{s:GLn-ess} by applying Corollary \ref{c:levis-imply-compact}.

Now assume $G$ is $\GL_n$. Given a Levi subgroup $M^\vee$ of $G^\vee$ and a unipotent element $u \in M^\vee$, similar logic as in Example \ref{s:GLn-ess} shows that $\Gamma_u^{M^\vee}$ contains essentially elliptic pairs if and only if $u$ is regular in $M^\vee$. To give more detail, note that $M^\vee \simeq \prod_{i = 1}^c \GL_{n_i}$ for some integers $n_i$. A unipotent element $u$ corresponds to a $c$-tuple of partitions $(\lam_1, \dots, \lam_c)$, where each $\lam_i$ is a partition of $n_i$. If $\Gamma^{M^\vee}_u$ contains essentially elliptic pairs, then $\rZ_{\Gamma^{M^\vee}_u}/\rZ_{M^\vee}$ must be finite, which occurs if and only if each $\lam_i$ is rectangular, in which case $\Gamma_u \simeq \prod_{i = 1}^c \GL_{r_i}(\bC)$ for some integers $r_i$ with $r_i \vert n_i$. Using the fact that any two commuting semisimple elements $s, h \in \Gamma_u$ lie in a common torus in $\Gamma_u$, we see that any maximal torus in $\Gamma_u$ must be equal to the center of $M^\vee$. This implies that all $r_i = 1$, i.e. that $u$ is regular. 
Thus the essentially elliptic conjecture follows for $M$ by Example \ref{s:GLn-ess}, and then Conjecture \ref{c:compact} follows from Corollary \ref{c:levis-imply-compact}.
\end{proof}

\section{Examples in type A}\label{s:ex-A}

We finish the paper by considering Conjecture \ref{c:compact} in the case when $\bG = \SL_n$ or $\PGL_n$. For $\bG = \SL_n$, we prove the conjecture; for $\bG = \PGL_n$ we classify compact pairs and prove the conjecture in the special case of $n$ prime.  

First we fix the following notation: given a partition $\lam = (a_1, \dots, a_1, a_2, \dots, a_2, \dots, a_\ell, \dots, a_\ell)$ of a positive integer $n$, write $\gcd(\lam)$ for $\gcd\{a_i \mid 1\leq i \leq \ell\}$.

\subsection{$\SL_n(F)$}

In this subsection, we let $\bG = \SL_n$. First we we classify compact pairs for $G^\vee = \PGL_n(\bC)$. Let $\zeta_n = e^{\frac{2\pi i}{n}}$, and let $h_0 = \text{diag}\{\zeta_n^{n - 1}, \zeta_n^{n - 2}, \dots, \zeta_n, 1\} \in G^\vee$. Let  $w_c \in G^\vee$ be the permutation matrix corresponding to the $n$-cycle $(1~ 2 \dots n)$.

\begin{lem}\label{lem-pairs-pgln}
With notation as above, a set of representatives for $\cY(\PGL_n(\bC))/_\sim$ is given by 
\begin{equation*}
\{(h_0, 1), (h_0, w_c), \dots, (h_0, w_c^{n-1})\}.
\end{equation*}
In particular, $\PGL_n(\bC)$ contains $n$ compact pairs.
\end{lem}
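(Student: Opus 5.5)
The plan is to classify $\sim$-classes of commuting semisimple pairs in $\PGL_n(\bC)$ by one discrete invariant, the commutator of lifts, and to show that each value of this invariant is attained by exactly one of the listed pairs.

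\emph{The invariant.} For $(s,h)\in\cY(\PGL_n(\bC))$, choose lifts $\tilde s,\tilde h\in\GL_n(\bC)$. Then $\tilde s\tilde h\tilde s^{-1}\tilde h^{-1}=c(s,h)$ is a scalar. Taking determinants gives $c^n=1$, so $c(s,h)\in\mu_n$, and $c(s,h)$ does not depend on the choice of lifts.
\begin{enumerate}
\item[(i)] It is invariant under simultaneous conjugation.
\item[(ii)] It is invariant under $\sim_L$. Suppose $\gamma h\gamma^{-1}=h't$ with $\gamma\in\rZ(s)$ and $t$ in a torus $T\subset\rZ(s,h)$. Then $c(s,h't)=c(s,h)\,c(s,t)$. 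The map $t\mapsto c(s,t)$ is a continuous homomorphism from the connected group $T$ to the finite group $\mu_n$, so it is trivial.
\item[(iii)] It is invariant under $\sim_R$, by the same argument.
\end{enumerate}
A direct computation gives $c(h_0,w_c^k)=\zeta_n^{\pm k}$, with the sign fixed by the convention. So the $n$ listed pairs have pairwise distinct invariants and are pairwise non-equivalent. It remains to show that every pair is $\sim$-equivalent to one of them.

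\emph{Normal form.} Let $(s,h)$ have commutator $c$, a primitive $d$-th root of unity, and set $m=n/d$. The lifts $\tilde s,\tilde h$ generate a finite Heisenberg-type action on $\bC^n$ with central character of order $d$, up to the commuting semisimple parts. By the Stone--von Neumann argument for the finite Heisenberg group, I expect a decomposition $\bC^n\cong\bC^d\otimes W$ with $\dim W=m$, in which
\[
\tilde s=X\otimes A,\qquad \tilde h=Y\otimes B .
\]
Here $X$ is the diagonal matrix of $d$-th roots of unity (the clock matrix), $Y$ is the cyclic permutation matrix (the shift matrix), $XY=cYX$, and $A,B$ are commuting semisimple elements of $\GL(W)$. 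To obtain this, diagonalize $\tilde h^d$ and $\tilde s^d$ and decompose under the group they generate.

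Next choose a maximal torus $D$ of $\GL(W)$ containing $A$ and $B$. The image $T$ of $1\otimes D$ is a torus in $\rZ(s,h)$, and $h$ and $h\cdot(1\otimes B)^{-1}$ differ by an element of $T$. Hence $(s,h)\sim_L(s,Y\otimes 1)$. Applying $\sim_R$ in the same way, $(s,Y\otimes1)\sim_R(X\otimes1,Y\otimes1)$. This requires checking that $1\otimes D$ still centralizes the intermediate pair, which it does, and that the torus allowed in the definition of $\sim$ may be any torus contained in a maximal torus of the centralizer, which is harmless. Finally, I will show that $(X\otimes 1_m,\,Y\otimes 1_m)$ is conjugate in $\PGL_n(\bC)$ to $(h_0,w_c^k)$ for a suitable $k$ with $\gcd(k,n)=m$ and $\zeta_n^{\pm k}=c$.
\begin{enumerate}
\item[(a)] $w_c^k$ is a product of $m$ disjoint $d$-cycles.
\item[(b)] On each of these cycles, $h_0$ acts with eigenvalues forming a coset of $\mu_d$.
\item[(c)] After rescaling by a central scalar, each block of $(h_0,w_c^k)$ is $(X,Y)$ up to a scalar multiple of $X$ (the scalar is the coset representative). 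This scalar can again be absorbed using the torus $1\otimes D$ and $\sim_R$.
\end{enumerate}
The uniqueness statement in Stone--von Neumann (the representation is determined up to multiplicity by $c$) guarantees that the two pairs match.

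\emph{Main obstacle.} The delicate step is the normal form: obtaining the tensor decomposition with $A,B$ in a common torus that genuinely centralizes both $s$ and $h$, and then checking that each $\sim_L$ and $\sim_R$ move uses a torus in the centralizer of the current pair, as the definition requires. If the Heisenberg argument becomes messy, I would first try the alternative route through Proposition \ref{prop-map-on-sets}. That route reduces to essentially elliptic pairs in a Levi subgroup of $\PGL_n(\bC)$, which is the image of a product of blocks $\GL_{n_i}$. Ellipticity modulo the center then forces the clock/shift shape blockwise, and multiplication by $\rZ_{M^\vee}^\circ\times\rZ_{M^\vee}^\circ$ normalizes the scalars.
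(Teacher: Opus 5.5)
Your proof is correct, but it follows a different route from the paper. The paper fixes $s=h_0$ and applies Lemma \ref{l:ell-comp} with $A_{G^\vee}(h_0)\cong\bZ/n\bZ$ to get the $n$ $\sim_L$-classes $(h_0,w_c^j)$. It then treats an arbitrary finite-order $s$ through Reeder's Kac coordinates. Either the centralizer of $s$ is connected and $(s,h)\sim(1,1)$, or the Kac coordinates are $k$-periodic and $A_{G^\vee}(s)=\langle w_c^k\rangle$. In the second case the torus of $k$-periodic diagonal matrices gives $(s,w_c^{mk})\sim_R(h_0,w_c^{mk})$.

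You instead use the commutator pairing $c\colon\cY(\PGL_n(\bC))\to\mu_n$ as a complete invariant, together with a clock/shift normal form. The normal form step you flag as the main obstacle is in fact unproblematic. On each joint eigenspace of $(\tilde s^d,\tilde h^d)$, rescale so that $\tilde s^d=\tilde h^d=1$. Then $\tilde h$ permutes the $d$ eigenspaces of $\tilde s$ cyclically, which gives $\bC^d\otimes W$ directly. Also, once $k$ is chosen with $\zeta_n^{-k}=c$, each $d$-cycle block of $(h_0,w_c^k)$ is literally $(\zeta X,Y)$ for a scalar $\zeta$, so step (c) needs no uniqueness theorem.

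What your approach buys:
\begin{enumerate}
\item It is self-contained.
\item It handles all commuting semisimple pairs without first passing to finite-order representatives.
\item Because $c$ is constant on full $\sim$-classes, it proves that the $n$ listed pairs are pairwise inequivalent under $\sim$, not merely under $\sim_L$. The paper's write-up leaves this point implicit.
\end{enumerate}
What the paper's approach buys: it stays inside the component-group and Kac-coordinate framework that is reused for $\Gamma_u$ via Lemmas \ref{lem-conn-centralizer} and \ref{lem-pairs-quotient}, and it is shorter given the cited results.

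One small slip in (ii): the identity should read $c(s,h)=c(s,\gamma h\gamma^{-1})=c(s,h')\,c(s,t)$. This uses that $c(s,\cdot)$ is a homomorphism on $\rZ(s)$, which you should state.
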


\begin{proof}
We have $A_{G^\vee}(h_0) \simeq \bZ/n\bZ$, which we can take to be generated by $w_c$. By Lemma \ref{l:ell-comp}, there are $n$ $\sim_L$-equivalence classes of pairs $(h_0, h)$: these are represented by $(h_0, 1), (h_0, w_c),$ $\dots, (h_0, w_c^{n-1})$.

Now suppose $s \in \PGL_n(\bC)$ is semisimple of finite order. Using \cite[\S2, Example 1]{Re4}, there exists an $n$-tuple of nonnegative integers $(s_0, \dots, s_{n - 1})$ (the Kac coordinates of $s$) such that $s$ is $\PGL_n(\bC)$-conjugate to the diagonal matrix diag $\{\zeta^{-s_0}, \zeta^{-s_0 - s_1}, \dots, \zeta^{-s_0 - \dots - s_{n - 2}}, 1\}$, where $\zeta = e^{\frac{2\pi i}{\sum_{i = 0}^{n - 1} s_i}}$. If $C_{G^\vee}(s)$ is connected, then for any semisimple $h \in C_{G^\vee}(s)$, we have $(s, h) \sim (s, 1) \sim (1, 1)$. If not, then using \cite[Proposition 2.1]{Re4}, there exists a positive integer $k \vert n$ such that the Kac coordinates $(s_0, s_1, \dots, s_{n - 1})$ are of the form $(s_0, \dots, s_{k - 1}, s_0, s_1, \dots, s_{k - 1}, s_0, \dots, s_{k - 1})$ and $A_{G^\vee}(s)$ is generated by $w_c^k$. Thus any compact pair of the form $(s, h)$ for this fixed $s$ is equivalent to one of the form $(s, w_c^{mk})$ for some $m$.

To finish the proof, it suffices to show that for every $m$, we have $(s, w_c^{mk}) \sim_R (h_0, w_c^{mk})$. Let $S \subset \PGL_n(\bC)$ be the subgroup of diagonal matrices of the form $\text{diag }\{t_1, t_2, \dots, t_k, t_1, t_2, \dots, t_k, \dots, t_1, \dots, t_k\}$. Then $S$ is a torus in $\rZ_{\PGL_n(\bC)}(s, w_c^{mk})$, and it is not hard to see that $h_0^{-1}s \in S$. This shows that $(s, w_c^{mk}) \sim (h_0, w_c^{mk})$, and so there are exactly $n$ compact pairs in $\PGL_n(\bC)$. 
\end{proof}

We now prove two general lemmas which will be helpful in the classification of compact pairs in $\Gamma_u$ for a general unipotent element $u$. 

\begin{lem}\label{lem-conn-centralizer}
Let $\Gamma$ be a connected complex reductive group, and let $S \subset \rZ_\Gamma^\circ$ be a torus contained in the center of $\Gamma$. Let $\rho: \Gamma \to \Gamma/S$ be the natural projection map. Then for any semisimple element $s \in \Gamma$, $\rZ_{\Gamma/S}(sS)^\circ = \rho(\rZ_\Gamma(s)^\circ)$. 
\end{lem}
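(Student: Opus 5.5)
We prove the two inclusions, with the reverse inclusion handled by a dimension count plus connectedness.

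\emph{The inclusion $\rho(\rZ_\Gamma(s)^\circ)\subseteq \rZ_{\Gamma/S}(sS)^\circ$.} If $g$ commutes with $s$, then $\rho(g)$ commutes with $\rho(s)=sS$. So $\rho(\rZ_\Gamma(s))\subseteq \rZ_{\Gamma/S}(sS)$. Since $\rho$ is continuous, $\rho(\rZ_\Gamma(s)^\circ)$ is a connected closed subgroup containing the identity. It is closed because $\rho$ is a quotient map of algebraic groups, so the image of an algebraic subgroup is closed. Hence it lies in the identity component.

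\emph{The reverse inclusion.} It suffices to show that $\rho(\rZ_\Gamma(s)^\circ)$ has the same dimension as $\rZ_{\Gamma/S}(sS)$. Both groups are connected, and a closed connected subgroup of a connected group of the same dimension is the whole group. We compare Lie algebras. Write $\mathfrak g=\mathrm{Lie}(\Gamma)$ and $\mathfrak s=\mathrm{Lie}(S)$; then $\mathfrak s$ is central in $\mathfrak g$, and $\mathrm{Lie}(\Gamma/S)=\mathfrak g/\mathfrak s$. Since $s$ is semisimple, so is $sS$ in $\Gamma/S$. Therefore $\mathrm{Lie}(\rZ_{\Gamma/S}(sS))$ is the fixed space of $\Ad(sS)$ on $\mathfrak g/\mathfrak s$. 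This Lie algebra computation for centralizers of semisimple elements is standard (Steinberg / Humphreys, in characteristic zero).

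The adjoint action of $s$ on $\mathfrak g$ is semisimple and preserves the central subspace $\mathfrak s$, on which it acts trivially because $S$ is central. By complete reducibility we may write $\mathfrak g=\mathfrak s\oplus\mathfrak c$ with $\mathfrak c$ an $\Ad(s)$-stable complement. Then
\[
(\mathfrak g/\mathfrak s)^{\Ad(s)}\cong \mathfrak c^{\Ad(s)},\qquad \mathfrak g^{\Ad(s)}=\mathfrak s\oplus\mathfrak c^{\Ad(s)}.
\]
Hence $\dim \rZ_{\Gamma/S}(sS)=\dim \rZ_\Gamma(s)-\dim S$.

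On the other hand, $S\subseteq \rZ_\Gamma(s)^\circ$, because $S$ is a connected subgroup of the center. So the kernel of $\rho|_{\rZ_\Gamma(s)^\circ}$ is exactly $S$, and
\[
\dim\rho(\rZ_\Gamma(s)^\circ)=\dim\rZ_\Gamma(s)-\dim S.
\]
The two dimensions agree, which gives the reverse inclusion.

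The main subtlety is the identification of the Lie algebra of the centralizer with the fixed points of $\Ad$ in the quotient group $\Gamma/S$. This needs $sS$ to be semisimple, which holds because $\rho$ maps semisimple elements to semisimple elements. We also use that we are in characteristic zero, so centralizers are smooth.

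Alternatively, one can argue via tori. Choose a maximal torus $T\ni s$, so $T\supseteq S$. Then $\rZ_\Gamma(s)^\circ$ is generated by $T$ and the root subgroups $U_\alpha$ with $\alpha(s)=1$. Roots of $\Gamma/S$ with respect to $T/S$ are the same roots, since $S$ lies in their kernels. Then $\rZ_{\Gamma/S}(sS)^\circ$ is generated by $T/S$ and the images $\rho(U_\alpha)$ with $\alpha(sS)=\alpha(s)=1$, which is visibly $\rho(\rZ_\Gamma(s)^\circ)$. Either route works; I would present the root-subgroup description as the main argument since it avoids smoothness remarks.
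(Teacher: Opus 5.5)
Your proof is correct. The root-subgroup argument at the end, which you say you would make primary, is exactly the paper's proof. The paper takes a maximal torus $T\ni s$ and notes that $\Gamma$ and $\Gamma/S$ have the same roots, that $\rho$ maps each root group $\widetilde X_\alpha$ onto $X_\alpha$, and that $\rho(T)=T/S$. It then reads off the equality from the explicit description of $\rZ_\Gamma(s)^\circ$ and $\rZ_{\Gamma/S}(sS)^\circ$ in Carter, Theorem 3.5.3.

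Your dimension-count argument is a genuinely different route, and it is also valid. It replaces the structure theory of centralizers of semisimple elements with smoothness of centralizers in characteristic zero, plus the existence of an $\Ad(s)$-stable complement to $\mathfrak s$ in $\mathfrak g$. That route does not actually need $s$ semisimple. The subspace $\mathfrak s$ even has an $\Ad(\Gamma)$-stable complement: take a complement of $\mathfrak s$ in the centre of $\mathfrak g$ and add $[\mathfrak g,\mathfrak g]$. So the same computation gives $\rZ_{\Gamma/S}(xS)^\circ=\rho(\rZ_\Gamma(x)^\circ)$ for every $x\in\Gamma$. The root-group description, by contrast, genuinely uses that $s$ lies in a torus, but it is more concrete and is the version the paper relies on.
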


\begin{proof} Let $T \subset \Gamma$ be a maximal torus containing $s$. Note that the root systems for $\Gamma$ and $\Gamma/S$ are the same: identifying these, write $\Phi$ for the roots of $\Gamma$ with respect to $T$. Given $\al \in \Phi$, write $\widetilde{X}_\al$ for the corresponding root group in $\Gamma$ and $X_\al$ for the corresponding root group in $\Gamma/S$. Note that $\rho(\widetilde{X}_\al) = X_\al$ for all $\al \in \Phi$, and $\rho(T) = T/S$. The lemma then follows from the explicit descriptions of $\rZ_\Gamma(s)^\circ$ and $\rZ_{\Gamma/S}(sS)^\circ$ given, for example, in \cite[Theorem 3.5.3]{Car}. 
\end{proof}

\begin{lem}\label{lem-pairs-quotient}
Let $\Gamma$ be a connected complex reductive group, and let $S \subset \rZ_\Gamma^\circ$ be a torus contained in the center of $\Gamma$. Then the natural projection $\rho: \Gamma \to \Gamma/S$ induces a well-defined map $\cY(\Gamma)/_\sim \to \cY(\Gamma/S)/_\sim$. If $\Gamma/S$ is connected, semisimple, and adjoint, this map is injective.
\end{lem}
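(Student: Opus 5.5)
Write $\bar x$ for $\rho(x)=xS$. The proof has two parts: well-definedness of the induced map, and injectivity when $\Gamma/S$ is connected, semisimple and adjoint.

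\emph{Well-definedness.} Since $\sim$ is generated by $\sim_L$ and $\sim_R$, it suffices to check that $\rho$ carries each of these relations to the same relation on $\cY(\Gamma/S)$. Commuting semisimple pairs clearly map to commuting semisimple pairs. Suppose $(s,h)\sim_L(s,h')$, so that $\gamma h\gamma^{-1}\in h'T$ with $\gamma\in\rZ_\Gamma(s)$ and $T$ a maximal torus of $\rZ_\Gamma(s,h)$. Then $\bar\gamma$ centralizes $\bar s$, and $\bar\gamma\bar h\bar\gamma^{-1}\in\bar h'\rho(T)$. The only point to check is that $\rho(T)$ lies in a maximal torus $\bar T$ of $\rZ_{\Gamma/S}(\bar s,\bar h)$, since then $\rho(T)\subset\bar T$ gives $(\bar s,\bar h)\sim_L(\bar s,\bar h')$. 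This holds because $\rho(T)$ is a torus commuting with $\bar s$ and $\bar h$. The argument for $\sim_R$ is symmetric.

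\emph{Injectivity.} The plan is to lift classes of pairs in $\Gamma/S$ to $\Gamma$ and to show that any two lifts of the same pair are $\sim$-equivalent in $\Gamma$. First I would show that two pairs with the same image are equivalent. If $\rho(s,h)=\rho(s',h')$, then $s'=sz_1$ and $h'=hz_2$ with $z_i\in S$. Because $S\subset\rZ_\Gamma^\circ$ is a central torus, it lies in every maximal torus of $\rZ_\Gamma(s,h)$. Hence $(s,h)\sim_L(s,hz_2)\sim_R(sz_1,hz_2)$, exactly as in Lemma \ref{l:equiv-ess}. The same argument shows that replacing either coordinate of a lift by another element of its $S$-coset does not change the $\sim$-class in $\Gamma$. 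Lifts of commuting elements need not commute, however: if $\bar s\bar h=\bar h\bar s$, then $[s,h]\in S$, and since $\Gamma/S$ is adjoint we only know that $\Gamma$ is a central extension of an adjoint group by a torus.

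Given that, the remaining task is to lift each elementary step of a chain of $\sim_L$- and $\sim_R$-relations in $\cY(\Gamma/S)$ to $\Gamma$. Take a step $(\bar s,\bar h)\sim_L(\bar s,\bar h')$, given by $\bar\gamma\in\rZ_{\Gamma/S}(\bar s)$ and a maximal torus $\bar T$ of $\rZ_{\Gamma/S}(\bar s,\bar h)$. We need lifts $\gamma$, $h'$ and a torus $T$ with $\gamma\in\rZ_\Gamma(s)$ and $\gamma h\gamma^{-1}\in h'T$. The key tool is Lemma \ref{lem-conn-centralizer}: it gives $\rZ_{\Gamma/S}(\bar s)^\circ=\rho(\rZ_\Gamma(s)^\circ)$, so $\rho^{-1}(\bar T)^\circ$ is a torus lying in $\rZ_\Gamma(s,h)$ modulo controlling components. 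I would reduce to the identity component $\rZ_{\Gamma/S}(\bar s)^\circ$ by using Lemma \ref{l:ell-comp}: a $\sim_L$-class is determined by the conjugacy class of $\bar h$ in $A_{\Gamma/S}(\bar s)$.

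The main obstacle is the comparison of component groups, namely that $A_\Gamma(s)$ embeds into $A_{\Gamma/S}(\bar s)$ and that commuting is preserved. For this I would use Lemma \ref{lem-conn-centralizer} together with the description $\rZ_\Gamma(s)/\rZ_\Gamma(s)^\circ\hookrightarrow\rZ_{\Gamma/S}(\bar s)/\rZ_{\Gamma/S}(\bar s)^\circ$, coming from $\rZ_\Gamma(s)=\rho^{-1}(\cdots)\cap\{\gamma : [\gamma,s]=1\}$. Granting this, the lifted pairs $(s,h)$ and $(s,h')$ have conjugate images in $A_\Gamma(s)$, so Lemma \ref{l:ell-comp}(1) in $\Gamma$ gives $(s,h)\sim_L(s,h')$. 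The $\sim_R$ steps are handled in the same way with the roles of the two coordinates swapped. The adjoint hypothesis is what should make the lifts of every intermediate pair in the chain commute, and I expect checking this step carefully to be the delicate point.
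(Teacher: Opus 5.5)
Your well-definedness argument, and your first injectivity step (two lifts of the same pair are $\sim$-equivalent), match the paper. You are also right that the real issue is lifting each elementary step of a chain in $\cY(\Gamma/S)$ to $\cY(\Gamma)$.

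\textbf{The gap.} The crucial step does not follow as written. The map $A_\Gamma(s)\to A_{\Gamma/S}(\bar s)$ is indeed injective. But injectivity does not let you conclude that images of $h,h'$ which are conjugate in $A_{\Gamma/S}(\bar s)$ are conjugate in $A_\Gamma(s)$, since conjugacy in a larger group need not descend to a subgroup. Concretely, a conjugating element $\bar\gamma\in\rZ_{\Gamma/S}(\bar s)$ lifts only to some $\gamma$ with $\gamma s\gamma^{-1}\in sS$. Such a $\gamma$ need not centralize $s$, so it does not produce a $\sim_L$-move in $\Gamma$. You also leave open how the adjoint hypothesis enters. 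So the proposal stops exactly at the step that needs an idea.

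\textbf{The missing ingredient.} This is what the paper uses: when $\Gamma/S$ is semisimple and adjoint, the component group $A_{\Gamma/S}(\bar s)$ is abelian \cite[Proposition 2.1]{Re4}. The argument then runs as follows.
\begin{enumerate}
\item[(i)] By Lemma \ref{l:ell-comp}(1), a step $(\bar s,\bar h)\sim_L(\bar s,\bar h')$ simply means $\bar h^{-1}\bar h'\in\rZ_{\Gamma/S}(\bar s)^\circ$.
\item[(ii)] By Lemma \ref{lem-conn-centralizer}, $\rZ_{\Gamma/S}(\bar s)^\circ=\rho(\rZ_\Gamma(s)^\circ)$. Hence $\bar h'$ has a lift $h'\in h\,\rZ_\Gamma(s)^\circ$.
\item[(iii)] This lift commutes with $s$. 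It is semisimple, because its unipotent part would lie in the torus $S$. It has the same image as $h$ in $A_\Gamma(s)$, so $(s,h)\sim_L(s,h')$ in $\cY(\Gamma)$.
\item[(iv)] The same argument handles $\sim_R$-steps.
\end{enumerate}
This shows at once that the image of $\cY(\Gamma)$ is stable under elementary moves, so every chain lifts step by step. Combined with your first step, this gives injectivity.

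\textbf{Where adjointness actually enters.} It is not what makes lifts commute. Even without it, $\gamma h\gamma^{-1}y$ with $y\in\rZ_\Gamma(s)^\circ$ commutes with $s$, because $\gamma s\gamma^{-1}\in sS$ and $S$ is central. What abelianness buys is that you may take $\bar\gamma=1$. This avoids conjugating by a non-centralizing $\gamma$.
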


\begin{proof} It's not hard to see that if $(s, h) \in \cY(\Gamma)$, then $(sS, hS) \in \cY(\Gamma/S)$. Now suppose $(s, h) \sim_L (s, h')$ in $\cY(\Gamma)$, so there exists a maximal torus $T$ of $\rZ_\Gamma(s, h)$ and $\gamma \in Z_\Gamma(s)$ such that $\gamma h \gamma^{-1} h'^{-1} \in T$. We may choose a maximal torus $T'$ of $\rZ_{\Gamma/S}(sS, hS)$ such that $\rho(T) \subset T'$. Then $\rho(\gamma h\gamma^{-1} h'^{-1}) \in T'$, so $(sS, hS) \sim_L (sS, h'S)$. By symmetry, we see that $(s, h) \sim_R (s', h)$ in $\cY(\Gamma)$ implies $(sS, hS) \sim_R (s'S, hS)$ in $\cY(\Gamma/S)$. Thus $\rho$ induces a well-defined map $\cY(\Gamma)/\sim \to \cY(\Gamma/S)/\sim$. 

Suppose $(s, h) \in \cY(\Gamma)$. Then for any $z_1, z_2 \in S$, we have $(sz_1, hz_2) \in \cY(\Gamma)$. Since $hz_2 \in h\rZ_\Gamma(s)^\circ$, we have $(sz_1, hz_2) \sim_L (sz_1, h)$, and similarly $(sz_1, h) \sim_R (s, h)$. Thus $(sz_1, hz_1) \sim (s, h)$. 

Now suppose $\Gamma/S$ is connected, semisimple, and adjoint. Suppose $(sS, h'S) \in \cY(\Gamma/S)$ is in the image of $\cY(\Gamma)$ and that $(sS, hS) \sim_L (sS, h'S)$ in $\cY(\Gamma/S)$. Let $z_1, z_2, z_3, z_4 \in S$. To show injectivity, we must show that $(sz_1, hz_2) \sim (sz_3, h'z_4)$ in $\cY(\Gamma)$. We already know that $(sz_1, hz_2) \sim (s, h)$ and $(sz_3, h'z_4) \sim (s, h')$, so it suffices to show that $(s, h) \sim (s, h')$ in $\cY(\Gamma)$.

Note that $A_{\Gamma/S}(s)$ is abelian \cite[Proposition 2.1]{Re4}, and so by Lemma \ref{l:ell-comp}, $hh'^{-1} \in \rZ_{\Gamma/S}(sS)^\circ$. Then by Lemma \ref{lem-conn-centralizer} $hh^{-1} \in \rZ_\Gamma(s)^\circ$, so $(s, h) \sim_L (s, h')$. The same proof shows that if $(s', h) \in \cY(\Gamma)$ and $(sS, hS) \sim_R (s'S, hS)$ in $\cY(\Gamma/S)$, then $(s, h) \sim (s', h)$. Thus the map $\cY(\Gamma)/_\sim \to \cY(\Gamma/S)/_\sim$ is injective as claimed.
\end{proof}

Now fix a unipotent element $u \in G^\vee$ corresponding to the partition 
\begin{equation*}
\lambda=(\underbrace{a_1,\dots,a_1}_{r_1},\underbrace{a_2,\dots,a_2}_{r_2},\dots,\underbrace{a_\ell,\dots,a_\ell}_{r_\ell}), 
\end{equation*}
(with $\sum_{i=1}^\ell a_i r_i=n$).
Recall that
\begin{equation}
\Gamma_u \simeq \left(\prod_{i=1}^\ell \GL_{r_i}(\mathbb C)\right)/Z,
\end{equation}
where we think of $\GL_{r_i}$ as embedded block diagonally in $\GL_{(r_1 + \dots + r_\ell)}$, and 
$Z$ is the center of $\GL_{(r_1 + \dots +r_\ell)}(\bC)$. 
For each $i \in \{1, \dots, \ell\}$, let $h_i = \text{ diag }\{\zeta_{r_i}^{r_i - 1}\dots \zeta_{r_i}, 1\} \in \GL_{r_i}(\bC)$ and let $w_i$ be the matrix in $\GL_{r_i}(\bC)$ corresponding to the $r_i$-cycle $(1~2\dots r_i)$. Then $w_ih_iw_i^{-1} = \zeta_{r_i}h_i$. Note that if $\lam^\perp$ is the partition of $n$ dual to $\lam$, then $\gcd(\lam^\perp) = \gcd(r_1, \dots, r_\ell)$. 
For each $i \in \{1, \dots, \ell\}$, let $k_i = \frac{r_i}{\gcd(\lam^\perp)}$.
Let $w = \prod w_i^{k_i} \in \Gamma_u$, and let $h_0 = \prod h_i \in \Gamma_u$. Then $h_0$ commutes with $w$ since $w_i^kh_iw_i^{-k_i} = \zeta_{\gcd(\lam^\perp)}h_i$ for all $i$.

\begin{prop}
A complete set of compact pairs in $\Gamma_u$ is given by \[\{(h_0, 1), (h_0, w), \dots,\\ (h_0, w^{\gcd(\lam^\perp) - 1}) \}.\] In particular, $\Gamma_u$ contains $\gcd(\lam^\perp)$ compact pairs.
\end{prop}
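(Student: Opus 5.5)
We plan to reduce to Lemma \ref{lem-pairs-pgln} via Lemma \ref{lem-pairs-quotient}, and then identify the image by a center-and-component-group computation. Write $\widetilde\Gamma=\prod_{i=1}^\ell \GL_{r_i}(\bC)$, so $\Gamma_u=\widetilde\Gamma/Z$. Let $S\subset \rZ_{\Gamma_u}^\circ$ be the image of the center $\prod_i \bC^\times$ of $\widetilde\Gamma$; it is a torus of dimension $\ell-1$. Then $\Gamma_u/S\cong\prod_i \PGL_{r_i}(\bC)$ is connected, semisimple and adjoint, so by Lemma \ref{lem-pairs-quotient} the projection induces an injection
\[
\cY(\Gamma_u)/_\sim\hookrightarrow \cY\Big(\prod_i\PGL_{r_i}(\bC)\Big)/_\sim=\prod_i \cY(\PGL_{r_i}(\bC))/_\sim .
\]
By Lemma \ref{lem-pairs-pgln} the right side consists of the tuples $\big((h_i,w_i^{m_i})\big)_i$ with $0\le m_i<r_i$. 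The equivalence relation factors over a product of groups because centralizers, maximal tori and component groups of a product are products. So it remains to determine which tuples $(m_i)$ lift to pairs in $\Gamma_u$.

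For the image, a lift of $\big((h_i, w_i^{m_i})\big)_i$ must be of the form $(\prod_i h_i c_i,\ \prod_i w_i^{m_i}d_i)$ with $c_i,d_i$ central scalars in $\GL_{r_i}$. It must commute in $\Gamma_u$, i.e. up to an element of $Z$ in $\widetilde\Gamma$. Since $w_i^{m_i}h_iw_i^{-m_i}=\zeta_{r_i}^{m_i}h_i$, the commutator in $\widetilde\Gamma$ is the block scalar $(\zeta_{r_i}^{m_i})_i$, and it lies in $Z$ exactly when $\zeta_{r_i}^{m_i}$ is independent of $i$. So we need some $\omega$ with $\omega^{r_i}=1$ for all $i$, i.e. $\omega$ a $\gcd(r_1,\dots,r_\ell)=\gcd(\lambda^\perp)$-th root of unity, with $m_i\equiv$ the corresponding exponent. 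Writing $\omega=\zeta_{\gcd(\lambda^\perp)}^m$ forces $m_i=mk_i$. Hence the image is exactly the set of tuples indexed by $m\in\{0,\dots,\gcd(\lambda^\perp)-1\}$, and these lift to $(h_0,w^m)$. Combined with injectivity, this gives the complete list and the count $\gcd(\lambda^\perp)$.

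The main obstacle is the first step: identifying $\cY(\Gamma_u/S)/_\sim$ with the product of the $\PGL_{r_i}$ answers. We need that every compact pair in $\Gamma_u$ maps to a pair whose first coordinate can be moved to $h_i$ in every factor simultaneously. The moves are $\sim_R$ moves in each factor separately, and we must check they are compatible with the relation on the product; we expect this to follow by applying the product decomposition of $\rZ(s,h)$ and its maximal tori. A secondary point is that the lifted pairs $(h_0,w^m)$ for distinct $m$ are distinct; this is automatic from injectivity, since their images are distinct representatives in Lemma \ref{lem-pairs-pgln}.
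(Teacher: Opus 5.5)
Your reduction is the same as the paper's. You inject $\cY(\Gamma_u)/_\sim$ into $\cY(\prod_i\PGL_{r_i}(\bC))/_\sim$ via Lemma \ref{lem-pairs-quotient}, enumerate the target with Lemma \ref{lem-pairs-pgln}, and decide which classes are hit by a commutator computation. Your arithmetic at the end is correct and tidier than the paper's: $\omega^{r_i}=1$ for all $i$ forces $\omega=\zeta_d^m$ with $d=\gcd(\lambda^\perp)$, and then $m_i=mk_i$ because both $m_i$ and $mk_i$ lie in $[0,r_i)$. The point you flag as the main obstacle is routine. Centralizers of pairs in a product, and their maximal tori, are products, so $\sim$ on $\prod_i\PGL_{r_i}(\bC)$ is just the product of the factorwise relations.

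The genuine gap is in determining the image. A class in $\cY(\prod_i\PGL_{r_i}(\bC))/_\sim$ lies in the image as soon as \emph{some} member of it is the image of a commuting pair in $\Gamma_u$. You only test whether the particular representative $(h_0,\prod_i w_i^{m_i})$ lifts to a commuting pair. That is not enough by itself: a chain of $\sim$-moves in $\Gamma_u/S$ might start at a liftable pair and end at one of your non-liftable representatives, and then your list would be incomplete. The danger is real, because the moves multiply by elements of tori in $\rZ_{\Gamma_u/S}(sS,hS)$, and the preimage of $\rZ_{\Gamma_u/S}(sS)$ can be strictly larger than $\rZ_{\Gamma_u}(s)$. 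For example, $w_1$ placed in the first factor centralizes $h_0S$ but not $h_0$ when $\ell\ge 2$ and $r_1>1$.

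What you need is the claim the paper proves in the middle paragraph of its argument: if $(s,h)\in\cY(\Gamma_u)$ and $(sS,hS)\sim(xS,yS)$, then $xy=yx$ in $\Gamma_u$. For a $\sim_L$-move $(sS,hS)\sim_L(sS,h'S)$, the paper argues as follows. Since $A_{\Gamma_u/S}(sS)$ is abelian, Lemma \ref{l:ell-comp}(1) gives $h^{-1}h'S\in\rZ_{\Gamma_u/S}(sS)^\circ$. Lemma \ref{lem-conn-centralizer} identifies this group with the image of $\rZ_{\Gamma_u}(s)^\circ$. Hence $h^{-1}h'\in\rZ_{\Gamma_u}(s)^\circ S\subset\rZ_{\Gamma_u}(s)$, using that $S$ is central, so $(s,h')\in\cY(\Gamma_u)$. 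The $\sim_R$ case is symmetric. Once liftability is known to be a $\sim$-invariant in the quotient, your commutator computation does complete the proof, and distinctness follows from injectivity as you say.
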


\begin{proof}Let $\rho: \Gamma_u \to \prod_{i = 1}^\ell \PGL_{r_i}(\bC)$ be the natural projection, and let $S$ be the kernel of this projection. Using Lemma \ref{lem-pairs-pgln}, we can see that the compact pairs for $\prod_{i = 1}^\ell \PGL_{r_i}(\bC)$ are given by $\{(h_0, \prod_{i = 1}^\ell w_i^{c_i}) \mid c_i \in \{0, \dots, r_i - 1\} \}$. By Lemma \ref{lem-pairs-quotient}, the map $\cY(\Gamma_u)/_\sim \to \cY(\prod \PGL_{r_i}(\bC))/_\sim$ induced by $\rho$ is injective, and thus the pairs $(h_0, 1), (h_0, w), \dots, (h_0, w^{\gcd(\lam^\perp) - 1}) $ all determined distinct $\sim$-equivalence classes in $\cY(\Gamma_u)$. 

Next we claim that if $(s, h) \in \cY(\Gamma_u)$, then given $(xS, yS) \in \cY(\prod \GL_{r_i})$ with $(sS, hS) \sim (xS, yS)$, we have $xy = yx$, i.e. $(x, y) \in \cY(\Gamma_u)$. To see this, suppose $(sS, hS) \sim_L (sS, h'S)$. Then $h^{-1}h'S \in \rZ_{\prod \GL_{r_i}}(sS)^\circ$, so by Lemma \ref{lem-conn-centralizer}, $h^{-1}h' \in \rZ_{\Gamma_u}(s)^\circ$, which in particular implies that $h' \in \rZ_{\Gamma_u}(s)$ and $(s, h') \in \cY(\Gamma_u)$. Similarly, if $(sS, hS) \sim_R (s'S, hS)$, then $(s', h) \in \cY(\Gamma_u)$. Together these imply the claim.

Now suppose $(s, h) \in \cY(\Gamma_u)$ maps to the $\sim$-equivalence class determined by $(h_0, \prod w_i^{c_i}) \in \cY(\prod \PGL_{r_i}(\bC))$. Then by the claim in the previous paragraph, we see that $h_0$ and $\prod w_i^{c_i}$ commute, which implies $\zeta_{r_i}^{c_i} = \zeta_{r_j}^{c_j}$ for all $i, j \in \{1, \dots, \ell\}$, i.e. $\frac{c_i}{r_i} = \frac{c_j}{r_j}$ for all $i, j$ (here we're using the fact that $c_i < r_i$ for all $i$). Note that $k_i \mid c_i$ for all $i$. Indeed, fix $i$. Then $c_ir_j = c_jr_i$ for all $j$, so $c_ik_j = c_jk_i$ for all $j$, so $k_i \mid c_ik_j$ for all $j$. If $p$ is a prime with $p^m \mid k_i$, then by definition of $\gcd$, there exists $j$ such that $p \nmid k_j$, so $p^\ell \mid c_i$. Thus $k_i \mid c_i$. Writing $c_i = k_in_i$ for all $i$, we have $\frac{n_i}{\gcd(\lam^\perp)} = \frac{n_j}{\gcd(\lam^\perp)}$ for all $i, j$, and so $n_i = n_j$ for all $i, j$, and $\prod w_i^{c_i}$ is a power of $w$. Thus the compact pairs for $\Gamma_u$ are as claimed.
\end{proof}

We now consider Conjecture \ref{c:compact}. Fix a unipotent element $u \in G^\vee$ corresponding to $\lambda$ as above. To simplify notation, set $d=\gcd(\lam^\perp)$. The component group $A_{\Gamma_u}(h_0)=C_d$, generated by $w$ (mod $Z_{\Gamma_u}(h_0)^\circ$). The L-packet corresponding to the pair $(u,h_0)$ is $\{\pi(u,h_0,\phi)\mid \phi\in \widehat C_d\}.$ As it is well known, see for example \cite[Theorem 4.3]{GK}, the representations in this L-packet are precisely the direct summands (appearing with multiplicity one) in the restriction from $\GL_n(F)$ to $\SL_n(F)$ of the irreducible tempered $\GL_n(F)$-representation 
\[
\pi(u,h_0)=i_{\prod_{i=1}^\ell \GL_{a_i}(F)^{r_i}}^{\GL_n(F)}(\St\otimes |\det|^{h_0}).
\]
Moreover, the component group $C_d$ acts simply transitively on the $\SL_n(F)$ L-packet. Set 
\begin{equation}
    \Pi(u,h_0,w^m)=\sum_{\phi\in \widehat C_d}\phi(w^m)~ \pi(u,h_0,\phi). 
\end{equation}
We are interested in the parahoric restriction of $\Pi(u,h_0,w^m)$. There are $n$ conjugacy classes of maximal parahoric subgroups in $\SL_n(F)$, all conjugate in $\GL_n(F)$ to $K_0=\SL_n(\mathfrak o_F).$ This is an easy generalization of the calculation in \cite[\S13.3]{ACR} in the case $\ell=1$ (the elliptic case). As in {\it loc. cit.}, denote by $W^a$ the affine Weyl group of type $A_{n-1}$ and let $W_i\cong S_n$, $i=0,\dots,n-1$, denote the finite parahoric subgroups of $W^a$ corresponding to the maximal parahoric subgroups of $G$, such that $W_0=W$ corresponds to $K_0$.

Fix a primitive $d$-th root $\zeta_d$ of $1$, and denote by $\phi_k\in \widehat C_d$ the character $\phi_k(w)=\zeta_d^k$, $k=0,\dots,d-1$. Then
\[
    \Pi(u,h_0,w^m)=\sum_{k=0}^{d-1}\zeta_d^{mk}~ \pi(u,h_0,\phi_k),\quad 0\le m\le d-1.
\]
Since the order pairs $(w^m,h_0)$ and $h_0,w^{-m})$ are $\Gamma_u$-conjugate, we have
\[
\FT_\cpt^\vee(\Pi(u,h_0,w^m))=\Pi(u,h_0,w^{-m}).
\]
Denote 
\[
W_u=\left(\prod_{i=1^\ell} S_{a_i}^{r_i}\right)\rtimes C_d\le S_n.
\]
As in \cite[\S13.3]{ACR}, for computing the parahoric restrictions, we may think of the Iwahori-fixed vectors of the representation $\pi(u,h_0,\phi)$ (a module for the Iwahori-Hecke algebra) and its deformation $q\to 1$, $\sigma_0(\pi(u,h_0,\phi)^I)$, a $W^a$-representation. In this equivalent setting, we need to determine the restriction of 
\[\sigma_0(\pi(u,h_0,\phi)^I)=\Ind_{W_u\rtimes X}^{S_n\rtimes X}((\sgn\rtimes \phi)\otimes h_0)\]
to the finite parahoric Weyl groups $W_i$. Here $X$ is the root lattice of $G$, so that $W_a=W_0\ltimes X=S_n\ltimes X$, and $h_0$ is regarded as a character of $X$. A similar calculation with Mackey restriction as in \cite[Lemma 13.8]{ACR} gives
\begin{equation}
    \Ind_{W_u\rtimes X}^{S_n\rtimes X}((\sgn\rtimes \phi_k)\otimes h_0)|_{W_i}\cong \Ind_{W_u}^{S_n}(\sgn \rtimes \phi_{k+\lfloor d i/n \rfloor}), \quad 0\le i<n,\ 0\le k<d.
\end{equation}
As in the proof of \cite[Proposition 13.9]{ACR}
\begin{align*}
\res_{K_i}\sigma_0(\Pi(u,h_0,w^m)^I)&=\zeta_d^{-mj}\sum_{k\in \bZ/d} \zeta_d^{mk} \Ind_{W_u}^{S_n}(\sgn\rtimes \phi_k),\text { where } j=\lfloor {d i}/n\rfloor, \text{ while}\\
\res_{K_i}\sigma_0(\Pi(u,h_0,w^{-m})^I)&=\zeta_d^{mj}\sum_{k\in \bZ/d} \zeta_d^{mk} \Ind_{W_u}^{S_n}(\sgn\rtimes \phi_k),
\end{align*}
implying that, analogously with {\it loc. cit.},
\begin{equation}
    \res_{K_i}\circ \FT^\vee_\cpt (\Pi(u,h_0,w^m))=\zeta_d^{2m \lfloor {d i}/n\rfloor} \FT_\cpt\circ \res_{K_i} (\Pi(u,h_0,w^m)),\quad 0\le m<d.
\end{equation}

We have thus proved the following:

\begin{thm}\label{t:SLn}
Conjecture \ref{c:compact} holds when $\bG = \SL_n$.
\end{thm}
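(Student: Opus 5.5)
The plan is to check Conjecture \ref{c:compact} one compact basis element at a time. By Remark \ref{r:conj-cpt} it is enough to verify commutativity of Diagram \ref{e:diagram-K-cpt} up to roots of unity for each maximal compact $K$. For $\SL_n(F)$ every maximal compact subgroup is a maximal parahoric, conjugate in $\GL_n(F)$ to $K_0=\SL_n(\mathfrak o_F)$, and there are representatives $K_0,\dots,K_{n-1}$ with finite Weyl groups $W_i\cong S_n$. Each $\overline{K_i}\cong\SL_n(\bF_q)$ is connected with no inner twists, and every unipotent character of $\SL_n(\bF_q)$ forms its own family. Hence $\FT_{K_i}$ acts as the identity on $R_\un(\overline{K_i})$. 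The proof therefore reduces to the identity
\[\res_{K_i}\Pi(u,h,s)_c=\zeta\,\res_{K_i}\Pi(u,s,h)_c\]
for every unipotent $u$ and compact pair $(s,h)$.

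\textbf{Step 1: a basis of compact pairs.} By the proposition classifying compact pairs in $\Gamma_u$, the pairs $(h_0,w^m)$ with $0\le m<d=\gcd(\lam^\perp)$ represent all classes in $\cY(\Gamma_u)/_\sim$. It remains to identify the flipped pair $(w^m,h_0)$ inside this list. Since $w$ cyclically permutes the eigenvalues of $h_0$ block by block, the pair $(w^m,h_0)$ should be $\Gamma_u$-conjugate to $(h_0,w^{-m})$, by a Fourier-type matrix intertwining $w\mapsto h_0$ and $h_0\mapsto w^{-1}$ in each $\GL_{r_i}$ block. This gives $\FT^\vee_\cpt\Pi(u,h_0,w^m)_c=\Pi(u,h_0,w^{-m})_c$.

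\textbf{Step 2: compute the restrictions explicitly.} The $L$-packet of $(u,h_0)$ consists of the constituents of the restriction to $\SL_n(F)$ of the tempered $\GL_n(F)$-representation
\[i_{\prod_i\GL_{a_i}(F)^{r_i}}^{\GL_n(F)}\bigl(\St\otimes|\det|^{h_0}\bigr),\]
each occurring with multiplicity one, with $C_d$ acting simply transitively on the packet. I would pass to Iwahori-fixed vectors and specialize $q\to1$, as in the Hecke-algebra aside of Section \ref{s:prelim} and \cite[\S13.3]{ACR}. This turns $\sigma_0(\pi(u,h_0,\phi)^I)$ into the representation $\Ind_{W_u\rtimes X}^{S_n\rtimes X}((\sgn\rtimes\phi)\otimes h_0)$ of the affine Weyl group. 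A Mackey computation generalizing \cite[Lemma 13.8]{ACR} then gives
\[\res_{W_i}\bigl(\sigma_0(\pi(u,h_0,\phi_k)^I)\bigr)=\Ind_{W_u}^{S_n}\bigl(\sgn\rtimes\phi_{k+\lfloor di/n\rfloor}\bigr).\]
The key input is that $h_0$, viewed as a character of $X$, evaluated on the translation part of the element conjugating $W_0$ to $W_i$ shifts the $C_d$-character by $\lfloor di/n\rfloor$.

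\textbf{Step 3: compare the two sides.} Reindex the sum over $k$ in $\Pi(u,h_0,w^{\pm m})=\sum_k\zeta_d^{\pm mk}\pi(u,h_0,\phi_k)$. Both restrictions become a common expression
\[\Sigma:=\sum_{k\in\bZ/d}\zeta_d^{mk}\,\Ind_{W_u}^{S_n}(\sgn\rtimes\phi_k),\]
up to the scalars $\zeta_d^{\mp mj}$ with $j=\lfloor di/n\rfloor$. The $w^{-m}$ side should match after the symmetry $\phi_k\mapsto\phi_{-k}$, which is realized by an automorphism normalizing $W_u$ and is trivial on $S_n$-induced characters. The two restrictions then differ by the root of unity $\zeta_d^{2mj}$, which proves the statement.

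The main obstacle is the Mackey computation in Step 2, specifically tracking the character shift $\lfloor di/n\rfloor$ correctly when $\ell>1$ and the blocks have different sizes. I would first redo the $\ell=1$ computation of \cite{ACR}, then do the blocks simultaneously, using that $w=\prod_i w_i^{k_i}$ acts on each block compatibly with $h_0$.
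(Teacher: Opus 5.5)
Your proposal is correct and follows essentially the same route as the paper: the basis $(h_0,w^m)$ of compact pairs, the identification $\FT^\vee_{\cpt}\Pi(u,h_0,w^m)_c=\Pi(u,h_0,w^{-m})_c$, the $q\to 1$ Mackey computation giving $\Ind_{W_u}^{S_n}(\sgn\rtimes\phi_{k+\lfloor di/n\rfloor})$ on $W_i$, and the resulting root of unity $\zeta_d^{2m\lfloor di/n\rfloor}$, with $\FT_{K_i}$ trivial since $\overline{K_i}\cong\SL_n(\bF_q)$. You also make explicit the isomorphism $\Ind_{W_u}^{S_n}(\sgn\rtimes\phi_k)\cong\Ind_{W_u}^{S_n}(\sgn\rtimes\phi_{-k})$, which the paper uses only tacitly.

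One small correction, which applies equally to the paper's wording, concerns Step~1. The pair $(w^m,h_0)$ is in general only $\sim$-equivalent to $(h_0,w^{-m})$, not $\Gamma_u$-conjugate to it. For example, with $\ell=1$, $r_1=4$, $m=2$, the entries $w^2$ and $h_0$ are not conjugate in $\PGL_4(\bC)$. Likewise, your blockwise Fourier matrix actually sends $(w^m,h_0)$ to $(\prod_i h_i^{k_im},\prod_i w_i^{-1})$ rather than to $(h_0,w^{-m})$. This is harmless: $\Pi(u,s,h)_c$ depends only on the $\sim$-class, and the scalar commutator of lifts $\tilde s\tilde h\tilde s^{-1}\tilde h^{-1}$ fixes that class. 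It is a $\sim$-invariant that separates the $d$ classes, and it equals $\zeta_d^{m}$ for both pairs.
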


\subsection{$\PGL_n(F)$}
Now we take $\bG = \PGL_n$. 
First we we classify compact pairs for $G^\vee = \SL_n(\bC)$. Suppose $u \in G^\vee$ is unipotent corresponding to the partition $\lambda=(\underbrace{a_1,\dots,a_1}_{r_1},\underbrace{a_2,\dots,a_2}_{r_2},\dots,\underbrace{a_\ell,\dots,a_\ell}_{r_\ell})$. Recall that we have
\begin{equation}\label{eqn-GammauPGL}
\Gamma_u \simeq \left\{ (x_1, \dots, x_\ell) \in \prod_{i = 1}^\ell \GL_{r_i}( \bC) \mid \prod_{i = 1}^\ell \det(x_i)^{a_i} = 1\right\}.
\end{equation}
Let $\zeta \in \bC$ be a primitive $n$th root of unity, and let $z = \zeta I_n \in \PGL_n$ be the corresponding scalar matrix. 

\begin{prop}
With notation as above, let $d = \gcd(\lam)$. Then a set of representatives for $\cY(\Gamma_u)/_\sim$ is given by $\{(z^{m_1}, z^{m_2}) \mid m_1, m_2 \in \{0, 1, \dots, d - 1\}\}$.
In particular, the number of compact pairs in $\Gamma_u$ is $d^2$. 
\end{prop}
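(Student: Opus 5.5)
The plan is to reduce everything to the component group $\Gamma_u/\Gamma_u^\circ$. Using the description (\ref{eqn-GammauPGL}), I would show that this group is cyclic of order $d=\gcd(\lambda)$ and that its classes are represented by the central elements $z^m$. Then I would show that a pair $(s,h)$ is determined up to $\sim$ exactly by the components of $s$ and $h$.

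\emph{Step 1: structure of $\Gamma_u$.} Let $\chi=\prod_i\det(x_i)^{a_i}$ on $\widetilde\Gamma:=\prod_i\GL_{r_i}(\bC)$. Then $\chi=\chi_0^{d}$ with $\chi_0=\prod_i\det(x_i)^{a_i/d}$ a primitive (indivisible) character, since $\gcd(a_i/d)=1$. Hence $\Gamma_u^\circ=\ker\chi_0$, which is connected because $\chi_0$ is primitive on the central torus and $\widetilde\Gamma^{\der}=\prod_i\SL_{r_i}$ is connected. It follows that $\Gamma_u/\Gamma_u^\circ\cong\mu_d$ via $\chi_0$. The scalar $z^m=\zeta^mI_n$ lies in $\Gamma_u$, and its coordinates are $x_i=\zeta^m I_{r_i}$. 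We have
\[
\chi_0(z^m)=\zeta^{m\sum_i a_ir_i/d}=\zeta^{mn/d},
\]
which is a primitive-$d$-power root of unity raised to the $m$. So $z^0,\dots,z^{d-1}$ represent the $d$ distinct components, and $\Gamma_u=\Gamma_u^\circ\cdot\langle z\rangle$ with $z$ central.

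\emph{Step 2: every pair is equivalent to some $(z^{m_1},z^{m_2})$.} Let $(s,h)\in\cY(\Gamma_u)$, and write $s=s'z^{m_1}$, $h=h'z^{m_2}$ with $s',h'\in\Gamma_u^\circ$ semisimple and commuting. The derived group of $\Gamma_u^\circ$ is $\prod_i\SL_{r_i}$, which is simply connected. By Steinberg's theorem, $\rZ_{\Gamma_u^\circ}(s')$ is therefore connected. Since $z$ is central, $\rZ_{\Gamma_u}(s)=\rZ_{\Gamma_u}(s')$, and its identity component is $\rZ_{\Gamma_u^\circ}(s')$, which contains $h'$. The element $h'$ is semisimple in the connected reductive group $\rZ_{\Gamma_u}(s)^\circ=\rZ_{\Gamma_u}(s,z^{m_2})^\circ$, so it lies in a maximal torus $T$ of $\rZ_{\Gamma_u}(s,z^{m_2})$. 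Then $h\in z^{m_2}T$, which gives $(s,h)\sim_L(s,z^{m_2})$. Applying the same argument to $s$ inside $\rZ_{\Gamma_u}(z^{m_2})=\Gamma_u$ gives $(s,z^{m_2})\sim_R(z^{m_1},z^{m_2})$.

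\emph{Step 3: distinctness.} I would show that the map $(s,h)\mapsto(\bar s,\bar h)\in(\Gamma_u/\Gamma_u^\circ)^2\cong(\bZ/d)^2$ is constant on $\sim$-classes. Under $\sim_L$, the element $h$ is replaced by an element of $\gamma h\gamma^{-1}T$ with $T$ a torus, so $T\subset\Gamma_u^\circ$. Since the component group is abelian, the component of $h$ is unchanged, and $s$ is fixed. The case of $\sim_R$ is symmetric. By Step 1, the $d^2$ pairs $(z^{m_1},z^{m_2})$ have distinct images, so they are pairwise inequivalent. Since we may take $z$ of finite order, they are compact pairs, and their number is $d^2$.

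The main obstacle is Step 1 together with the connectedness input in Step 2. One must check carefully that $\ker\chi_0$ is connected and has simply connected derived group, so that semisimple centralizers in $\Gamma_u^\circ$ are connected. This is what makes the invariant of Step 3 complete. I would verify it by computing on the diagonal torus of $\widetilde\Gamma$ and using that $\widetilde\Gamma^{\der}=\prod_i\SL_{r_i}$ is contained in $\ker\chi_0$.
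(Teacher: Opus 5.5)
Your proof is correct and follows the paper's argument: $\chi_0$ (the paper's $f_u$) identifies $\Gamma_u/\Gamma_u^\circ\cong\mu_d$ with $\bar z$ as a generator, the invariant $(\bar s,\bar h)$ (well defined on $\sim$-classes because this group is abelian) separates the $d^2$ pairs, and $\sim_L$ followed by $\sim_R$ moves an arbitrary pair to some $(z^{m_1},z^{m_2})$; the only variation is that the paper gets the needed torus by simultaneously diagonalizing $s,h$ under $\prod_i\SL_{r_i}$ rather than by quoting Steinberg's connectedness theorem. One small correction: $\chi_0$ need not be primitive on the central torus $\prod_i\bC^\times I_{r_i}$ (for $\lambda=(2,2)$ it is $c\mapsto c^2$), so connectedness of $\ker\chi_0$ should be checked as your last paragraph suggests, on the diagonal torus or on $\widetilde\Gamma/\widetilde\Gamma^{\der}$, where the exponents $a_i/d$ are coprime.
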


\begin{proof}
The component group of $\Gamma_u$ is $\bZ/d\bZ$. 
There is a surjective map $f_u: \Gamma_u \to \mu_d$ defined by
\begin{equation*}
(x_1, \dots, x_\ell) \mapsto \prod_{i = 1}^\ell \det(x_i)^{\frac{a_i}{d}}.
\end{equation*}
The kernel of $f_u$ is the identity component of $\Gamma_u$.
Note that 
$z = (\zeta I_{r_1}, \dots, \zeta I_{r_\ell}) \in \Gamma_u$, and that $f_u(z)$ generates $\mu_d$. 
The pairs $(z^{m_1}, z^{m_2})$ are all in distinct $\sim$-equivalence classes for distinct pairs $(m_1, m_2)$ in $\{0, \dots, d - 1\}$: this follows easily from the definition of $\sim$.  
Now suppose $(s, h) \in \mathcal{Y}(\Gamma_u)$. After conjugating by an element of $\SL_{r_1}(\bC) \times \dots \times \SL_{r_\ell}(\bC) \subset \Gamma_u$, we may assume $s$ and $h$ are diagonal. If $h \in z^{m_2}\Gamma_u^\circ$, then $(s, h) \sim_L (s, z^{m_2})$ by definition of $\sim_L$, and similarly if $s \in z^{m_1}\Gamma_u^\circ$, then $(s, z^{m_2}) \sim_R (z^{m_1}, z^{m_2})$. \end{proof}

Fix $u$ as above. Now we consider the sums $\Pi(u, s, h)$. We take 
$M^\vee \simeq S((\GL_{a_1}(\bC))^{r_1} \times \dots \times (\GL_{a_\ell}(\bC))^{r_\ell})$ 
to be the Levi subgroup of $G^\vee$ such that $u$ is regular in $M^\vee$. 
Let $A_u$ be the component group of $\Gamma_u$, which is isomorphic to $\bZ/d\bZ$. 
We have a surjective homomorphism $Z_{G^\vee} \to A_u$, so every character $\rho$ of $A_u$ determines a character of $Z_{G^\vee}$ and thus determines a unique pure inner twist $G_\rho$ of $G$. Further, since $Z_{M^\vee}/Z_{M^\vee}^\circ \simeq A_u$, each of these characters determines a pure inner twist $M_\rho$ of $M$, such that $M_\rho$ is a Levi subgroup of $M$.
Then 
\begin{align*}
\Pi(u, z^{m_1}, z^{m_2}) &= \sum_{\rho \in \hat{A}_u} \rho(z^{m_2})\pi(u, z^{m_1}, \rho)\\
&= \sum_{\rho \in \hat{A}_u} \rho(z^{m_2})\pi(u, 1, \rho) \otimes \chi_{m_1, \rho}\\
&= \sum_{\rho \in \hat{A}_u} \rho(z^{m_2})i_{M_\rho}^{G_\rho}(\St_{M_\rho}) \otimes \chi_{m_1, \rho}
\end{align*}
where $\St_{M_\rho}$ is the Steinberg representation of $M_\rho$, and $\chi_{m_1, \rho}$ is the weakly unramified character of $G_\rho$ corresponding to $z^{m_1}$.

\begin{lem}\label{l:FT-PGL}
Let $H = (\GL_r(\bF_q))^m/\bF_q^\times \rtimes \bZ/m\bZ$ for some positive integers $r, m$, where $\bZ/m\bZ$ acts by cyclically permuting the factors $\GL_r(\bF_q)$. Let $H^\circ = (\GL_r(\bF_q))^m/\bF_q^\times$. For every irreducible unipotent representation $\rho$ of $H^\circ$, the representation $\Ind_{H^\circ}^H \rho$ is fixed under $\FT_H$.
\end{lem}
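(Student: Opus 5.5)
We reduce the claim to the explicit description of the disconnected nonabelian Fourier transform of \cite[(6.3)]{ACR}, which follows \cite{Lu86} and \cite[Section 5]{DM}. The first step is to understand the unipotent representations of $H^\circ=(\GL_r(\bF_q))^m/\bF_q^\times$. Unipotent representations are trivial on the center, so irreducible unipotent representations of $H^\circ$ are the same as those of $\GL_r(\bF_q)^m$. These are the tensor products $\rho=\rho_{\mu_1}\boxtimes\cdots\boxtimes\rho_{\mu_m}$, where each $\mu_j$ is a partition of $r$. For the connected group $\GL_r$, every Lusztig family is a singleton and the Fourier transform is the identity on $R_\un(\GL_r(\bF_q))$. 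Since the transform of a product is the product of the transforms, $\FT_{H^\circ}$ is also the identity. (The quotient by $\bF_q^\times$ changes neither the unipotent characters nor the families.)

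Next we describe the action of $A=\bZ/m\bZ$. The generator cyclically permutes the tuple $(\mu_1,\dots,\mu_m)$. Let $\rho$ have stabilizer $A_\rho\le A$, and suppose $A_\rho$ has order $e$, so that the tuple is periodic with period $m/e$. By Clifford theory, $\Ind_{H^\circ}^H\rho=\Ind_{H^\circ\rtimes A_\rho}^H\bigl(\sum_{\theta\in\widehat{A_\rho}}\tilde\rho\otimes\theta\bigr)$, where $\tilde\rho$ is some extension of $\rho$ to $H^\circ\rtimes A_\rho$. In the Digne--Michel/Lusztig setup, the Fourier transform on $H=H^\circ\rtimes A$ acts on families of $H$, which are unions of $A$-orbits of families of $H^\circ$ twisted by characters of stabilizers. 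Here each $H^\circ$-family is the singleton $\{\rho\}$. The corresponding $H$-family is therefore parametrized by the pairs $(x,\theta)$ with $x\in A_\rho$ and $\theta\in\widehat{A_\rho}$, taken with the Drinfeld-double-type pairing for the abelian group $A_\rho$. In that basis, the irreducible constituents of $\Ind_{H^\circ}^H\rho$ correspond to the elements $(1,\theta)$, and the twisted-induced "almost characters" correspond to $(x,1)$.

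The key computation is as follows. For the cyclic group $A_\rho$, the Fourier matrix $\{(x,\theta),(y,\theta')\}=\frac{1}{|A_\rho|}\theta(y)^{-1}\theta'(x)^{-1}$, or its normalization as in \cite[(6.3)]{ACR}, sends $\sum_\theta(1,\theta)$ to $|A_\rho|$ times $\frac{1}{|A_\rho|}\sum_{(y,\theta')}\bigl(\sum_\theta\theta(y)^{-1}\bigr)(y,\theta')$. The inner sum vanishes unless $y=1$, so the image is $\sum_{\theta'}(1,\theta')$. Hence $\FT_H(\Ind_{H^\circ}^H\rho)=\Ind_{H^\circ}^H\rho$. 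An alternative way to see this is that the sum over $\theta$ is the element "supported at $x=1$". The analogue of the identity $\FT\circ\Ind=\Ind\circ\FT$ for the connected part \cite[Prop.~6.4]{DM} then gives $\FT_H\Ind_{H^\circ}^H\rho=\Ind_{H^\circ}^H\FT_{H^\circ}\rho=\Ind_{H^\circ}^H\rho$.

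The main obstacle is making sure that the normalizations in the definition of $\FT_K$ from \cite{ACR} agree with this model. Two points need checking. First, the choice of extension $\tilde\rho$ (canonical, via an $A_\rho$-stable Harish-Chandra/Deligne--Lusztig realization) must not introduce roots of unity that break the cancellation. Second, the $\theta$-twists must be exactly the $\widehat{A_\rho}$ part of the pair labels. I would first establish the statement in the form of \cite[Theorem 5.8, Proposition 6.4]{DM}, namely that $\FT_H$ commutes with $\Ind_{H^\circ}^H$. This is the cleanest route, since it avoids explicit labels: it reduces the lemma to the fact that $\FT_{H^\circ}$ is the identity on unipotent characters of a product of general linear groups.
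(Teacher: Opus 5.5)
Your proposal is correct and is essentially the paper's argument. You decompose $\Ind_{H^\circ}^H\rho$ by Clifford theory over the characters of the stabilizer $A_\rho$ (the paper's $\bZ/c\bZ$), identify the constituents with the elements $(1,\theta)$ of $\mathcal{M}(A_\rho)$, and check from the Fourier matrix that their sum is fixed; this is the ``straightforward computation'' the paper leaves to the reader. One cosmetic slip: drop the leading factor ``$|A_\rho|$ times'' in your displayed image, since the $\frac{1}{|A_\rho|}$ already cancels the inner sum $\sum_\theta\theta(1)^{-1}=|A_\rho|$.
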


\begin{proof}
We may write $\rho$ as $\rho_1 \boxtimes \dots \boxtimes \rho_m$ for some irreducible unipotent representations $\rho_1, \dots \rho_m$ of $\GL_r(\bF_q)$. Then $\bZ/m\bZ$ permutes the factors $\rho_i$. Let $\bZ/c\bZ \subset \bZ/m\bZ$ be the stabilizer of $\rho$ under this action. Given a character $\chi$ of $\bZ/c\bZ$, we may extend $\rho$ to a representation $V_\chi$ of $H^\circ \rtimes \bZ/c\bZ$, such that $\bZ/c\bZ$ acts via $\chi$. Then $\Ind_{H^\circ}^H \rho$ decomposes into a sum of irreducible subrepresentations as
\begin{equation*}
\Ind_{H^\circ}^H \rho = \bigoplus_{\chi \in \Irr(\bZ/c\bZ)} \Ind_{H^\circ \rtimes \bZ/c\bZ}^H V_\chi.
\end{equation*}
In the parametrization of irreducible unipotent representations of $H$, $\rho$ corresponds to a family with group $\bZ/c\bZ$, and each representation $\Ind_{H^\circ \rtimes \bZ/c\bZ}^H V_\chi$ corresponds to $(0, \chi) \in \mathcal{M}(\bZ/c\bZ)$ (cf. \cite[Example 6.4]{ACR}, which also explains notation). The claim can then be checked by a straightforward computation using the definition of $\FT_H$.
\end{proof}

\begin{prop}\label{e:hyperspecial-PGL}
Let $G = \PGL_n$. We have 
\begin{equation*}
\res_K \FT^\vee_{\cpt} \Pi(u, s, h) = \FT_K \res_K \Pi(u, s, h)
\end{equation*}
in each of the following cases:
\begin{itemize}
\item[(1)] when $K$ is a hyperspecial maximal parahoric subgroup
\item[(2)] when $u$ corresponds to a partition $\lam$ such that $\gcd(\lam) = 1$.
\end{itemize}
\end{prop}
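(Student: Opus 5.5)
We work with the explicit formula derived just above,
\[
\Pi(u,z^{m_1},z^{m_2})=\sum_{\rho\in\hat A_u}\rho(z^{m_2})\, i_{M_\rho}^{G_\rho}(\St_{M_\rho})\otimes\chi_{m_1,\rho},
\]
with $\FT^\vee_\cpt\Pi(u,z^{m_1},z^{m_2})=\Pi(u,z^{m_2},z^{m_1})$. Since $\chi_{m_1,\rho}$ is weakly unramified, it is trivial on the parahoric part of any compact subgroup. On a disconnected $\overline K$, however, it acts through the component group. So the plan is to compute $\res_K$ of both sides explicitly and compare.

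\emph{Case (2).} If $\gcd(\lam)=1$, then $d=1$ and $A_u$ is trivial. The only compact pair is $(1,1)$, so $\Pi(u,1,1)$ is fixed by $\FT^\vee_\cpt$. It remains to show $\FT_K$ fixes $\res_K\Pi(u,1,1)=\res_K i_M^G(\St_M)$, where $M$ is a standard Levi of $\PGL_n(F)$. Apply Proposition \ref{p:par-ind}: the restriction is a sum over $g\in{}^K\!W^P$ of $i_{\overline{K\cap P^g}}^{\overline K}\res_{K\cap M^g}(\St_{M^g})$. The parahoric restriction of a Steinberg representation is the Steinberg (sign) representation of the finite reductive quotient, possibly summed over components. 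So each term is induced from a Steinberg of a Levi of $\overline K^\circ$. For $\overline K^\circ$ of type $A$, Harish-Chandra induction commutes with $\FT$ (Hypothesis \ref{h:FT-comm} holds for classical groups). Moreover, $\FT$ fixes every unipotent character of $\GL$-type groups, since families are singletons. In the disconnected case the remaining ingredient is Lemma \ref{l:FT-PGL}: $\Ind_{H^\circ}^H$ of an irreducible unipotent is $\FT_H$-fixed. Maximal compacts of $\PGL_n(F)$ have reductive quotients of exactly the form $H$ in that lemma, namely $\GL_r^m/\bF_q^\times\rtimes\bZ/m$ with $rm=n$. So we must check that $\res_K i_M^G\St_M$ is a sum of such induced pieces. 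This holds because the unramified twisting with $A_u$ trivial produces a $\bZ/m$-stable restriction, so it is induced from $H^\circ$.

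\emph{Case (1).} If $K$ is hyperspecial, then $\overline K=\PGL_n(\bF_q)$ is connected and only the split form $G$ contributes: $G_\rho=G$ exactly when $\rho$ is trivial on the image of $Z_{G^\vee}$, which for $\PGL_n$ means $\rho=1$. Hence
\[
\res_K\Pi(u,z^{m_1},z^{m_2})=\res_K i_M^G(\St_M)=i_{\overline{K\cap P}}^{\overline K}\St_{\overline{K\cap M}},
\]
using Proposition \ref{p:par-ind} and that $\chi_{m_1,1}$ is trivial on $K$. This expression is independent of $(m_1,m_2)$, so both sides of the identity are computed from the same object. Lusztig's $\FT$ for $\PGL_n(\bF_q)$ is the identity on unipotent characters, so the identity holds with $\zeta=1$.

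\emph{Main obstacle.} The delicate step is case (2) for non-hyperspecial maximal compacts, where $\overline K$ is disconnected. There one must verify, via the Mackey formula (\ref{e:mackey}), that each piece of $\res_K i_M^G\St_M$ has the form $\Ind_{H^\circ}^H\rho$ rather than a single extension $V_\chi$. I would argue this using that $K=N_G(K_0)$ and that the $\bZ/m$ component group permutes the double cosets ${}^{K_0}W^P$ freely enough. If that argument fails, I would fall back on checking the Clifford-theoretic decomposition directly and invoking the $(0,\chi)$ computation in Lemma \ref{l:FT-PGL}.
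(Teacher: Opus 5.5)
\textbf{Verdict: there is a gap in case (2).} Your case (1) is the paper's argument. Only $\rho=1$ contributes on the connected $\overline K$, the twist $\chi_{m_1}$ is trivial on the parahoric $K$, the restriction is independent of $(m_1,m_2)$, and $\FT_{\overline K}$ is the identity in type $A$.

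\textbf{Case (2).} The skeleton is right: Proposition \ref{p:par-ind}, then Lemma \ref{l:FT-PGL}. But the step that carries the weight is not proved.
\begin{itemize}
\item The sentence ``the unramified twisting with $A_u$ trivial produces a $\bZ/m$-stable restriction, so it is induced from $H^\circ$'' is not an argument. A representation of $H$ whose restriction to $H^\circ$ is $\bZ/m$-stable need not be induced from $H^\circ$; the trivial representation with $m>1$ is a counterexample. Moreover, no twisting occurs at all here, since the only compact pair is $(1,1)$ and $\Pi(u,1,1)=i_M^G\St_M$.
\item Your ``main obstacle'' paragraph then leaves the point open.
\item Your proposed freeness route is not a way around the issue but the same statement. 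The stabilizer of $K_0gP$ in $K/K_0$ is the image of $K\cap P^g$, so $K/K_0$ acts freely on $K_0\backslash G/P$ exactly when $K\cap P^g\subset K_0$ for all $g$.
\end{itemize}

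\textbf{The missing idea.} You need the containment $\overline{K\cap P^g}\subset \overline K^\circ$ for every $g$; the paper's proof rests on exactly this. Granting it, transitivity of induction gives
\[
i_{\overline{K\cap P^g}}^{\overline K}=\Ind_{\overline K^\circ}^{\overline K}\circ i_{\overline{K\cap P^g}}^{\overline K^\circ},
\]
and Lemma \ref{l:FT-PGL} applies to each summand by linearity. Neither Hypothesis \ref{h:FT-comm} nor any Steinberg computation is needed.

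\textbf{Where $\gcd(\lam)=1$ enters.}
\begin{itemize}
\item Since $\gcd(\lam)=1$, $\rZ_{M^\vee}$ is connected, so $X^*(\rZ_{M^\vee})$ is torsion-free.
\item Compact elements of $M^g$ have torsion Kottwitz invariant, hence trivial invariant.
\item The image of $K\cap P^g$ in $M^g$ is compact, and the Kottwitz map of $G$ restricted to $P^g$ factors through $M^g$.
\item Hence $K\cap P^g$ lies in $K\cap G_1=K_0$, where $G_1$ is the kernel of the Kottwitz map of $G$.
\end{itemize}

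\textbf{Why the hypothesis matters.} Without it the containment genuinely fails. In $\PGL_4(F)$, take $M$ to be the image of $\GL_2\times\GL_2$, so $\lam=(2,2)$, and $A=\begin{psmallmatrix}0&1\\ \varpi&0\end{psmallmatrix}$. The element $\mathrm{diag}(A,A)\in M$ has square $\varpi I_4$, which is trivial in $\PGL_4(F)$, so it is compact. Its Kottwitz invariant is $2\in\bZ/4$. Hence, for suitable $K$ and $g$, $\overline{K\cap P^g}$ meets non-identity components of $\overline K$.
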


\begin{proof}
First assume $K$ is a hyperspecial parahoric subgroup. Then $\overline{K}$ is connected (and so has no inner twists), so $\res_K i_{M_\rho}^{G_\rho}(\St_{M_\rho})$ is zero unless $\rho = 1$. We have
\begin{equation*}
\res_K \Pi(u, z^{m_1}, z^{m_2}) = \res_K i_M^G (\St_M) \otimes \chi_{m_1}.
\end{equation*}
Since $\chi_{m_1}$ is trivial on $K$, this is equal to $\res_K i_M^G (\St_M)$. In particular, it only depends on $u$ and not on the compact pair $(z^{m_1}, z^{m_2})$. Since $\FT_{\overline{K}}$ is trivial, we see that Diagram \ref{e:diagram-K-cpt} commutes for this choice of $K$, i.e. that $\res_K \FT^\vee_{\cpt} \Pi(u, s, h) = \FT_K \res_K \Pi(u, s, h)$ as claimed.

Now suppose $\gcd(\lam) = 1$. This tells us that $\rZ_{M^\vee}$ is connected. Thus $M$ has no non-split pure inner twists, and every maximal compact subgroup of $M$ is a parahoric. The only compact pair in $\Gamma_u$ is $(1, 1)$. For any maximal compact subgroup $K$ of $G$, we have 
\begin{equation*}
\res_K \Pi(u, 1, 1) = \sum_{g \in K \setminus G/P} i_{\overline{K \cap P^g}}^{\overline{K}} \res_{K \cap M^g}^{M^g} (\St_{M})^g.
\end{equation*}
We claim that for each $g$, the representation $i_{\overline{K \cap P^g}}^{\overline{K}} \res_{K \cap M^g}^{M^g} (\St_{M})^g$ is fixed under $\FT_K$. To see this, note that $\overline{K} = (\GL_r(\bF_q))^m/\bF_q^\times \rtimes \bZ/m\bZ$ for some $m$ and $r$. 
Also note that $\overline{K \cap P^g} \subset \overline{K}^\circ$, so we may view $i_{\overline{K \cap P^g}}^{\overline{K}} \res_{K \cap M^g}^{M^g} (\St_{M})^g$ as $\Ind_{\overline{K}^\circ}^{\overline{K}} (i_{\overline{K \cap P^g}}^{\overline{K}^\circ} \res_{K \cap M^g}^{M^g} (\St_{M})^g))$. The claim then follows from Lemma \ref{l:FT-PGL}.
\end{proof}

\begin{thm}\label{t:PGL}
Conjecture \ref{c:compact} holds for $\PGL_n$ whenever $n$ is prime. More specifically, Diagram \ref{e:diagram-cpt} commutes with trivial roots of unity.
\end{thm}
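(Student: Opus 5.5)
We reduce Theorem \ref{t:PGL} to Proposition \ref{e:hyperspecial-PGL}. By Remark \ref{r:conj-cpt}, it suffices to check that Diagram \ref{e:diagram-K-cpt} commutes with trivial root of unity for every maximal compact subgroup $K$ of $G=\PGL_n(F)$ and every basis element $\Pi(u,s,h)_c$ of $\Rcpt(G)$. By Proposition \ref{p:cpt-space} these basis elements are indexed by a unipotent class $u$, i.e. a partition $\lambda$ of $n$, together with one of the $d^2$ compact pairs $(z^{m_1},z^{m_2})$, where $d=\gcd(\lambda)$.

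When $n$ is prime, $d=\gcd(\lambda)$ divides $n$, so either $d=1$ or $d=n$. If $d=1$, we are in case (2) of Proposition \ref{e:hyperspecial-PGL}, and the diagram commutes for every $K$ with trivial root of unity. The remaining case is $d=n$. Since $\sum a_ir_i=n$ and every $a_i$ is divisible by $n$, this forces $\lambda=(n)$, so $u$ is regular unipotent. We then have $\Gamma_u=\rZ_{G^\vee}=\mu_n$, and the $n^2$ compact pairs are exactly pairs in $\rZ_{G^\vee}/\rZ_{G^\vee}^\circ$. This is the situation of Example \ref{ex:reg-cpt}, which reduces to Section \ref{s:reg-uni}. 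Note that the $\Pi(u,s,h)$ are combinations of twisted Steinberg representations of the inner twists $\PGL_m(D)$, with $m\in\{1,n\}$. By the argument of \cite[Section 9.2]{ACR} quoted there, $\res_K\Pi(u_{\mathrm{reg}},h,s)_c=\FT_K\res_K\Pi(u_{\mathrm{reg}},s,h)_c$ holds with no root of unity.

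Hence every basis element satisfies the required identity with $\zeta=1$ for every $K$. By linearity of $\res_K$, $\FT_K$ and $\FT^\vee_\cpt$ on $\Rcpt(G)$, the diagram commutes.

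The only delicate point I foresee is the regular case. The maximal compact subgroups of $\PGL_n(F)$ for $n$ prime are the hyperspecial ones (handled equally by case (1)) and the normalizer of the Iwahori, whose reductive quotient is a torus extended by $\bZ/n\bZ$. I need to confirm that the ACR computation for the elliptic case indeed gives trivial $\zeta$ here, and does not only hold up to a root of unity. If this fails, the fallback is to compute directly. The restriction of $\St\otimes\chi$ to the Iwahori normalizer is a character of $\bZ/n\bZ$ extended from the sign on the torus part. Combining this with the inner-twist contributions from $D^\times$, one checks by the explicit definition of $\FT_K$ for the disconnected group $\overline K$ (cf. \cite[Example 6.4]{ACR}) that swapping $(s,h)$ corresponds to the Fourier matrix on $\mathcal M(\bZ/n\bZ)$. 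This reflects the fact that the pairing $\rho(h)$ over $\widehat{\mu_n}$ is symmetric.
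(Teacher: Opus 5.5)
Your proposal is correct and follows the paper's proof: for $n$ prime, a non-regular $u$ has $\gcd(\lambda)=1$ and falls under case (2) of Proposition \ref{e:hyperspecial-PGL}, while regular $u$ is handled by Example \ref{ex:reg-cpt}. The fallback computation you sketch is unnecessary, because Section \ref{s:reg-uni} and Example \ref{ex:reg-cpt} already state the regular-case identity as an exact equality, with no root of unity.
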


\begin{proof} 
If $u$ is regular, this follows from Example \ref{ex:reg-cpt}, and if $u$ is not regular, then this follows from Proposition \ref{e:hyperspecial-PGL}. 
\end{proof}

\begin{rem}
Conjecture \ref{c:compact} becomes significantly more complicated for $\PGL_n$ when $n$ is not prime. In this case, 
$\res_K \Pi(u, s, h)$ will generally be a sum of parabolically induced reduced representations for inner twists of $\overline{K}$, and $\FT_K$ mixes the spaces $R_\un(\overline{K}')$ corresponding to the different inner twists $\overline{K}'$. It then becomes necessary to understand and relate the double-coset spaces $K_\rho\setminus G_\rho/P_\rho$ as $\rho$ ranges over $\widehat{A}_u$ (here $P_\rho$ is a parabolic with Levi component $M_\rho$ and $K_\rho \subset G_\rho$ is a maximal compact subgroup such that $\overline{K}_\rho$ is an inner twist of $\overline{K}$). 
Similar considerations arise for other non-simply connected groups $G$. One can approach these issues using ideas in \cite[Section 4.5]{Hai09}.
\end{rem}

\bibliographystyle{plain}
\bibliography{compact-FT}

@article {ACR,
    AUTHOR = {Aubert, Anne-Marie and Ciubotaru, Dan and Romano, Beth},
     TITLE = {A nonabelian {F}ourier transform for tempered unipotent
              representations},
   JOURNAL = {Compos. Math.},
  FJOURNAL = {Compositio Mathematica},
    VOLUME = {161},
      YEAR = {2025},
    NUMBER = {1},
     PAGES = {13--73},
      ISSN = {0010-437X,1570-5846},
   MRCLASS = {22E50 (20C33)},
  MRNUMBER = {4877237},
MRREVIEWER = {Alain\ Valette},
       DOI = {10.1112/S0010437X24007401},
       URL = {https://doi.org/10.1112/S0010437X24007401},
}

@article {AMS1,
    AUTHOR = {Aubert, Anne-Marie and Moussaoui, Ahmed and Solleveld,
              Maarten},
     TITLE = {Generalizations of the {S}pringer correspondence and cuspidal
              {L}anglands parameters},
   JOURNAL = {Manuscripta Math.},
  FJOURNAL = {Manuscripta Mathematica},
    VOLUME = {157},
      YEAR = {2018},
    NUMBER = {1-2},
     PAGES = {121--192},
      ISSN = {0025-2611,1432-1785},
   MRCLASS = {11S37 (20G05 22E50)},
  MRNUMBER = {3845761},
MRREVIEWER = {Volker\ J.\ Heiermann},
       DOI = {10.1007/s00229-018-1001-8},
       URL = {https://doi.org/10.1007/s00229-018-1001-8},
}

@article {BNP,
    AUTHOR = {Ben-Zvi, David and Nadler, David and Preygel, Anatoly},
     TITLE = {A spectral incarnation of affine character sheaves},
   JOURNAL = {Compositio Math.},
    VOLUME = {153},
      YEAR = {2017},
    NUMBER = {9},
     PAGES = {1908--1944}}

@article {LNY,
    AUTHOR = {Li, Penghui and Nadler, David and Yun, Zhiwei},
     TITLE = {Functions on the commuting stack via {L}anglands duality},
   JOURNAL = {Annals Math.},
    VOLUME = {200},
      YEAR = {2024},
    NUMBER = {2},
     PAGES = {609--748}}

@article {BDK,
    AUTHOR = {Bernstein, Joseph and Deligne, Pierre and Kazhdan, David},
     TITLE = {Trace {P}aley-{W}iener theorem},
   JOURNAL = {J. Analyse Math.},
    VOLUME = {47},
      YEAR = {1986},
     PAGES = {180--192}}

@article {BKK,
    AUTHOR = {Bezrukavnikov, Roman and Karpov, Ivan and Krylov, Vasily},
     TITLE = {A geometric realization of the asymptotic affine {H}ecke algebra},
     YEAR = {2024},
   JOURNAL = {arXiv:2312.10582v2},
}

@inproceedings {Bor,
    AUTHOR = {Borel, A.},
     TITLE = {Automorphic {$L$}-functions},
 BOOKTITLE = {Automorphic forms, representations and {$L$}-functions
              ({P}roc. {S}ympos. {P}ure {M}ath., {O}regon {S}tate {U}niv.,
              {C}orvallis, {O}re., 1977), {P}art 2},
    SERIES = {Proc. Sympos. Pure Math., XXXIII},
     PAGES = {27--61},
 PUBLISHER = {Amer. Math. Soc., Providence, RI},
      YEAR = {1979},
      ISBN = {0-8218-1437-0},
   MRCLASS = {10D40 (12A67 22E50)},
  MRNUMBER = {546608},
MRREVIEWER = {Yasuhiro\ Asoo},
}

@article {BK,
    AUTHOR = {Bushnell, Colin J. and Kutzko, Philip C.},
     TITLE = {Smooth representations of reductive {$p$}-adic groups:
              structure theory via types},
   JOURNAL = {Proc. London Math. Soc. (3)},
  FJOURNAL = {Proceedings of the London Mathematical Society. Third Series},
    VOLUME = {77},
      YEAR = {1998},
    NUMBER = {3},
     PAGES = {582--634},
      ISSN = {0024-6115,1460-244X},
   MRCLASS = {22E50 (22E35)},
  MRNUMBER = {1643417},
MRREVIEWER = {David\ Goldberg},
       DOI = {10.1112/S0024611598000574},
       URL = {https://doi.org/10.1112/S0024611598000574},
}

@article {BT1,
    AUTHOR = {Borel, Armand and Tits, Jacques},
     TITLE = {Groupes r\'{e}ductifs},
   JOURNAL = {Inst. Hautes \'{E}tudes Sci. Publ. Math.},
  FJOURNAL = {Institut des Hautes \'{E}tudes Scientifiques. Publications
              Math\'{e}matiques},
    NUMBER = {27},
      YEAR = {1965},
     PAGES = {55--150},
      ISSN = {0073-8301,1618-1913},
   MRCLASS = {14.50 (20.75)},
  MRNUMBER = {207712},
MRREVIEWER = {F.\ D.\ Veldkamp},
       URL = {http://www.numdam.org/item?id=PMIHES_1965__27__55_0},
}

@book {Car,
    AUTHOR = {Carter, Roger W.},
     TITLE = {Finite groups of {L}ie type},
    SERIES = {Pure and Applied Mathematics (New York)},
      NOTE = {Conjugacy classes and complex characters,
              A Wiley-Interscience Publication},
 PUBLISHER = {John Wiley \& Sons, Inc., New York},
      YEAR = {1985},
     PAGES = {xii+544},
      ISBN = {0-471-90554-2},
   MRCLASS = {20G40 (20-02 20C15)},
  MRNUMBER = {794307},
MRREVIEWER = {David\ B.\ Surowski},
}

@book {Cas,
    AUTHOR = {Casselman, W.},
     TITLE = {Introduction to the theory of admissible representations of $p$-adic reductive groups},
    SERIES = {https://www.math.ubc.ca/$\sim$cass/research/pdf/p-adic-book.pdf},
      NOTE = {},
 PUBLISHER = {},
      YEAR = {1995},
    URL = {https://www.math.ubc.ca/~cass/research/pdf/p-adic-book.pdf},
}

@article {Ciu,
    AUTHOR = {Ciubotaru, Dan},
     TITLE = {The nonabelian {F}ourier transform for elliptic unipotent representations of exceptional $p$-adic groups},
     YEAR = {2020},
   JOURNAL = {arXiv:2006.13540v2},
}

@article {Ci-inv,
    AUTHOR = {Ciubotaru, Dan},
     TITLE = {Types and unitary representations of reductive {$p$}-adic
              groups},
   JOURNAL = {Invent. Math.},
  FJOURNAL = {Inventiones Mathematicae},
    VOLUME = {213},
      YEAR = {2018},
    NUMBER = {1},
     PAGES = {237--269},
      ISSN = {0020-9910,1432-1297},
   MRCLASS = {22E50},
  MRNUMBER = {3815566},
MRREVIEWER = {Jeffrey\ D.\ Adler},
       DOI = {10.1007/s00222-018-0790-4},
       URL = {https://doi.org/10.1007/s00222-018-0790-4},
}

@article {CH2,
    AUTHOR = {Ciubotaru, Dan and He, Xuhua},
     TITLE = {Cocenters of {$p$}-adic groups, {III}: {E}lliptic and rigid
              cocenters},
   JOURNAL = {Peking Math. J.},
  FJOURNAL = {Peking Mathematical Journal},
    VOLUME = {4},
      YEAR = {2021},
    NUMBER = {2},
     PAGES = {159--186},
      ISSN = {2096-6075,2524-7182},
   MRCLASS = {22E50 (20C08)},
  MRNUMBER = {4319924},
MRREVIEWER = {Marko\ Tadi\'{c}},
       DOI = {10.1007/s42543-020-00027-1},
       URL = {https://doi.org/10.1007/s42543-020-00027-1},
}

@book {CM,
    AUTHOR = {Collingwood, David H. and McGovern, William M.},
     TITLE = {Nilpotent orbits in semisimple {L}ie algebras},
    SERIES = {Van Nostrand Reinhold Mathematics Series},
 PUBLISHER = {Van Nostrand Reinhold Co., New York},
      YEAR = {1993},
     PAGES = {xiv+186},
      ISBN = {0-534-18834-6},
   MRCLASS = {17-02 (17B20 17B25 22E60)},
  MRNUMBER = {1251060},
MRREVIEWER = {Stephen\ Slebarski},
}

@article {Da,
    AUTHOR = {Dat, J.-F.},
     TITLE = {On the {$K_0$} of a {$p$}-adic group},
   JOURNAL = {Invent. Math.},
  FJOURNAL = {Inventiones Mathematicae},
    VOLUME = {140},
      YEAR = {2000},
    NUMBER = {1},
     PAGES = {171--226},
      ISSN = {0020-9910,1432-1297},
   MRCLASS = {22E50 (19A99)},
  MRNUMBER = {1779801},
MRREVIEWER = {Bertrand\ Lemaire},
       DOI = {10.1007/s002220050360},
       URL = {https://doi.org/10.1007/s002220050360},
}

@article {DM,
    AUTHOR = {Digne, Fran\c{c}ois and Michel, Jean},
     TITLE = {On {L}usztig's parametrization of characters of finite groups
              of {L}ie type},
   JOURNAL = {Ast\'{e}risque},
  FJOURNAL = {Ast\'{e}risque},
    NUMBER = {181-182},
      YEAR = {1990},
     PAGES = {6, 113--156},
      ISSN = {0303-1179,2492-5926},
   MRCLASS = {20G05 (20C15)},
  MRNUMBER = {1051245},
MRREVIEWER = {Bhama\ Srinivasan},
}

@article {DM2,
    AUTHOR = {Digne, Fran\c{c}ois and Michel, Jean},
     TITLE = {Quasi-semisimple elements},
   JOURNAL = {Proc. Lond. Math. Soc. (3)},
  FJOURNAL = {Proceedings of the London Mathematical Society. Third Series},
    VOLUME = {116},
      YEAR = {2018},
    NUMBER = {5},
     PAGES = {1301--1328},
      ISSN = {0024-6115,1460-244X},
   MRCLASS = {20G15},
  MRNUMBER = {3805058},
MRREVIEWER = {Giancarlo\ Lucchini Arteche},
       DOI = {10.1112/plms.12121},
       URL = {https://doi.org/10.1112/plms.12121},
}

@book {DM91,
    AUTHOR = {Digne, Fran\c{c}ois and Michel, Jean},
     TITLE = {Representations of finite groups of {L}ie type},
    SERIES = {London Mathematical Society Student Texts},
    VOLUME = {21},
 PUBLISHER = {Cambridge University Press, Cambridge},
      YEAR = {1991},
     PAGES = {iv+159},
      ISBN = {0-521-40117-8; 0-521-40648-X},
   MRCLASS = {20G05 (20-02 20C33 20G40)},
  MRNUMBER = {1118841},
MRREVIEWER = {Bhama\ Srinivasan},
       DOI = {10.1017/CBO9781139172417},
       URL = {https://doi.org/10.1017/CBO9781139172417},
}

@article {GHKR,
    AUTHOR = {G\"{o}rtz, Ulrich and Haines, Thomas J. and Kottwitz, Robert
              E. and Reuman, Daniel C.},
     TITLE = {Affine {D}eligne-{L}usztig varieties in affine flag varieties},
   JOURNAL = {Compos. Math.},
  FJOURNAL = {Compositio Mathematica},
    VOLUME = {146},
      YEAR = {2010},
    NUMBER = {5},
     PAGES = {1339--1382},
      ISSN = {0010-437X,1570-5846},
   MRCLASS = {14L35 (11S25 14F30 20G25)},
  MRNUMBER = {2684303},
MRREVIEWER = {Alan\ Koch},
       DOI = {10.1112/S0010437X10004823},
       URL = {https://doi.org/10.1112/S0010437X10004823},
}

@article {GK,
    AUTHOR = {Gelbart, S. S. and Knapp, A. W.},
     TITLE = {{$L$}-indistinguishability and {$R$} groups for the special
              linear group},
   JOURNAL = {Adv. in Math.},
  FJOURNAL = {Advances in Mathematics},
    VOLUME = {43},
      YEAR = {1982},
    NUMBER = {2},
     PAGES = {101--121},
      ISSN = {0001-8708},
   MRCLASS = {22E50 (12B27)},
  MRNUMBER = {644669},
MRREVIEWER = {S.\ I.\ Gel\cprime fand},
       DOI = {10.1016/0001-8708(82)90030-5},
       URL = {https://doi.org/10.1016/0001-8708(82)90030-5},
}

@book {GM,
    AUTHOR = {Geck, Meinolf and Malle, Gunter},
     TITLE = {The character theory of finite groups of {L}ie type},
    SERIES = {Cambridge Studies in Advanced Mathematics},
    VOLUME = {187},
      NOTE = {A guided tour},
 PUBLISHER = {Cambridge University Press, Cambridge},
      YEAR = {2020},
     PAGES = {ix+394},
      ISBN = {978-1-108-48962-1},
   MRCLASS = {20C33 (20D06 20G05)},
  MRNUMBER = {4211779},
MRREVIEWER = {Donald\ L.\ White},
}

@article {Hai09,
    AUTHOR = {Haines, Thomas J.},
     TITLE = {The base change fundamental lemma for central elements in
              parahoric {H}ecke algebras},
   JOURNAL = {Duke Math. J.},
  FJOURNAL = {Duke Mathematical Journal},
    VOLUME = {149},
      YEAR = {2009},
    NUMBER = {3},
     PAGES = {569--643},
      ISSN = {0012-7094,1547-7398},
   MRCLASS = {22E50 (20G25)},
  MRNUMBER = {2553880},
MRREVIEWER = {Anne-Marie\ H.\ Aubert},
       DOI = {10.1215/00127094-2009-045},
       URL = {https://doi.org/10.1215/00127094-2009-045},
}

@incollection {Hai14,
    AUTHOR = {Haines, Thomas J.},
     TITLE = {The stable {B}ernstein center and test functions for {S}himura
              varieties},
 BOOKTITLE = {Automorphic forms and {G}alois representations. {V}ol. 2},
    SERIES = {London Math. Soc. Lecture Note Ser.},
    VOLUME = {415},
     PAGES = {118--186},
 PUBLISHER = {Cambridge Univ. Press, Cambridge},
      YEAR = {2014},
      ISBN = {978-1-107-69363-0},
   MRCLASS = {11G18 (20G25 22E35 22E50)},
  MRNUMBER = {3444233},
MRREVIEWER = {Shuichiro\ Takeda},
}

@article {HS,
    AUTHOR = {Hiraga, Kaoru and Saito, Hiroshi},
     TITLE = {On {$L$}-packets for inner forms of {$SL_n$}},
   JOURNAL = {Mem. Amer. Math. Soc.},
  FJOURNAL = {Memoirs of the American Mathematical Society},
    VOLUME = {215},
      YEAR = {2012},
    NUMBER = {1013},
     PAGES = {vi+97},
      ISSN = {0065-9266,1947-6221},
      ISBN = {978-0-8218-5364-1},
   MRCLASS = {22E55 (11F70)},
  MRNUMBER = {2918491},
MRREVIEWER = {Wee\ Teck\ Gan},
       DOI = {10.1090/S0065-9266-2011-00642-8},
       URL = {https://doi.org/10.1090/S0065-9266-2011-00642-8},
}

@article {IM,
    AUTHOR = {Iwahori, N. and Matsumoto, H.},
     TITLE = {On some {B}ruhat decomposition and the structure of the
              {H}ecke rings of {${\germ p}$}-adic {C}hevalley groups},
   JOURNAL = {Inst. Hautes \'{E}tudes Sci. Publ. Math.},
  FJOURNAL = {Institut des Hautes \'{E}tudes Scientifiques. Publications
              Math\'{e}matiques},
    NUMBER = {25},
      YEAR = {1965},
     PAGES = {5--48},
      ISSN = {0073-8301,1618-1913},
   MRCLASS = {20.70 (14.50)},
  MRNUMBER = {185016},
MRREVIEWER = {Rimhak\ Ree},
       URL = {http://www.numdam.org/item?id=PMIHES_1965__25__5_0},
}

@article {Kaz,
    AUTHOR = {Kazhdan, David},
     TITLE = {Cuspidal geometry of {$p$}-adic groups},
   JOURNAL = {J. Analyse Math.},
  FJOURNAL = {Journal d'Analyse Math\'{e}matique},
    VOLUME = {47},
      YEAR = {1986},
     PAGES = {1--36},
      ISSN = {0021-7670,1565-8538},
   MRCLASS = {22E50},
  MRNUMBER = {874042},
MRREVIEWER = {Ernst-Wilhelm\ Zink},
       DOI = {10.1007/BF02792530},
       URL = {https://doi.org/10.1007/BF02792530},
}

@article {Ku,
    AUTHOR = {Kutzko, P. C.},
     TITLE = {Mackey's theorem for nonunitary representations},
   JOURNAL = {Proc. Amer. Math. Soc.},
  FJOURNAL = {Proceedings of the American Mathematical Society},
    VOLUME = {64},
      YEAR = {1977},
    NUMBER = {1},
     PAGES = {173--175},
      ISSN = {0002-9939,1088-6826},
   MRCLASS = {22D30 (22E50)},
  MRNUMBER = {442145},
MRREVIEWER = {A.\ Kleppner},
       DOI = {10.2307/2041005},
       URL = {https://doi.org/10.2307/2041005},
}

@book {Lubook,
    AUTHOR = {Lusztig, George},
     TITLE = {Characters of reductive groups over a finite field},
    SERIES = {Annals of Mathematics Studies},
    VOLUME = {107},
 PUBLISHER = {Princeton University Press, Princeton, NJ},
      YEAR = {1984},
     PAGES = {xxi+384},
      ISBN = {0-691-08350-9; 0-691-08351-7},
   MRCLASS = {20G05 (14L20 20C15)},
  MRNUMBER = {742472},
MRREVIEWER = {Bhama\ Srinivasan},
       DOI = {10.1515/9781400881772},
       URL = {https://doi.org/10.1515/9781400881772},
}

@article {Lu86,
    AUTHOR = {Lusztig, George},
     TITLE = {Character sheaves. {IV}},
   JOURNAL = {Adv. in Math.},
  FJOURNAL = {Advances in Mathematics},
    VOLUME = {59},
      YEAR = {1986},
    NUMBER = {1},
     PAGES = {1--63},
      ISSN = {0001-8708},
   MRCLASS = {20G05 (22E47)},
  MRNUMBER = {825086},
MRREVIEWER = {Bhama\ Srinivasan},
       DOI = {10.1016/0001-8708(86)90036-8},
       URL = {https://doi.org/10.1016/0001-8708(86)90036-8},
}

@article {LuI,
    AUTHOR = {Lusztig, George},
     TITLE = {Classification of unipotent representations of simple
              {$p$}-adic groups},
   JOURNAL = {Internat. Math. Res. Notices},
  FJOURNAL = {International Mathematics Research Notices},
      YEAR = {1995},
    NUMBER = {11},
     PAGES = {517--589},
      ISSN = {1073-7928,1687-0247},
   MRCLASS = {22E50 (20G25 22E35)},
  MRNUMBER = {1369407},
MRREVIEWER = {Lawrence\ Morris},
       DOI = {10.1155/S1073792895000353},
       URL = {https://doi.org/10.1155/S1073792895000353},
}

@article {MW,
    AUTHOR = {M{\oe}glin, Colette and Waldspurger, Jean-Loup},
     TITLE = {Paquets stables de repr\'{e}sentations temp\'{e}r\'{e}es et de
              r\'{e}duction unipotente pour {${\rm SO}(2n+1)$}},
   JOURNAL = {Invent. Math.},
  FJOURNAL = {Inventiones Mathematicae},
    VOLUME = {152},
      YEAR = {2003},
    NUMBER = {3},
     PAGES = {461--623},
      ISSN = {0020-9910,1432-1297},
   MRCLASS = {22E50},
  MRNUMBER = {1988295},
       DOI = {10.1007/s00222-002-0274-3},
       URL = {https://doi.org/10.1007/s00222-002-0274-3},
}

@article {Pra1,
    AUTHOR = {Prasad, Dipendra},
     TITLE = {Generalizing the {MVW} involution, and the contragredient},
   JOURNAL = {Trans. Amer. Math. Soc.},
  FJOURNAL = {Transactions of the American Mathematical Society},
    VOLUME = {372},
      YEAR = {2019},
    NUMBER = {1},
     PAGES = {615--633},
      ISSN = {0002-9947,1088-6850},
   MRCLASS = {11F70 (22E55)},
  MRNUMBER = {3968781},
MRREVIEWER = {Kimball\ L.\ Martin},
       DOI = {10.1090/tran/7602},
       URL = {https://doi.org/10.1090/tran/7602},
}

@article {Re2,
    AUTHOR = {Reeder, Mark},
     TITLE = {Formal degrees and {$L$}-packets of unipotent discrete series
              representations of exceptional {$p$}-adic groups},
      NOTE = {With an appendix by Frank L\"{u}beck},
   JOURNAL = {J. Reine Angew. Math.},
  FJOURNAL = {Journal f\"{u}r die Reine und Angewandte Mathematik. [Crelle's
              Journal]},
    VOLUME = {520},
      YEAR = {2000},
     PAGES = {37--93},
      ISSN = {0075-4102,1435-5345},
   MRCLASS = {22E50 (11F70 22E35)},
  MRNUMBER = {1748271},
MRREVIEWER = {Dihua\ Jiang},
       DOI = {10.1515/crll.2000.023},
       URL = {https://doi.org/10.1515/crll.2000.023},
}

@article {Re4,
    AUTHOR = {Reeder, Mark},
     TITLE = {Torsion automorphisms of simple {L}ie algebras},
   JOURNAL = {Enseign. Math. (2)},
  FJOURNAL = {L'Enseignement Math\'{e}matique. Revue Internationale. 2e
              S\'{e}rie},
    VOLUME = {56},
      YEAR = {2010},
    NUMBER = {1-2},
     PAGES = {3--47},
      ISSN = {0013-8584},
   MRCLASS = {17B40 (17B20)},
  MRNUMBER = {2674853},
MRREVIEWER = {Xin\ Tang},
       DOI = {10.4171/LEM/56-1-1},
       URL = {https://doi.org/10.4171/LEM/56-1-1},
}

@article {So1,
    AUTHOR = {Solleveld, Maarten},
     TITLE = {A local {L}anglands correspondence for unipotent
              representations},
   JOURNAL = {Amer. J. Math.},
  FJOURNAL = {American Journal of Mathematics},
    VOLUME = {145},
      YEAR = {2023},
    NUMBER = {3},
     PAGES = {673--719},
      ISSN = {0002-9327,1080-6377},
   MRCLASS = {22E50 (20G25)},
  MRNUMBER = {4596175},
MRREVIEWER = {Petar\ Baki\'{c}},
}

@article {Som1,
    AUTHOR = {Sommers, Eric},
     TITLE = {A generalization of the {B}ala-{C}arter theorem for nilpotent
              orbits},
   JOURNAL = {Internat. Math. Res. Notices},
  FJOURNAL = {International Mathematics Research Notices},
      YEAR = {1998},
    NUMBER = {11},
     PAGES = {539--562},
      ISSN = {1073-7928,1687-0247},
   MRCLASS = {20G20 (17B20)},
  MRNUMBER = {1631769},
MRREVIEWER = {R.\ W.\ Carter},
       DOI = {10.1155/S107379289800035X},
       URL = {https://doi.org/10.1155/S107379289800035X},
}

@article {Wa,
    AUTHOR = {Waldspurger, J.-L.},
     TITLE = {Produit scalaire elliptique},
   JOURNAL = {Adv. Math.},
  FJOURNAL = {Advances in Mathematics},
    VOLUME = {210},
      YEAR = {2007},
    NUMBER = {2},
     PAGES = {607--634},
      ISSN = {0001-8708,1090-2082},
   MRCLASS = {20G05},
  MRNUMBER = {2303234},
MRREVIEWER = {Bhama\ Srinivasan},
       DOI = {10.1016/j.aim.2006.07.005},
       URL = {https://doi.org/10.1016/j.aim.2006.07.005},
}

@article {Wa2,
    AUTHOR = {Waldspurger, J.-L.},
     TITLE = {Repr\'esentations de r\'eduction unipotente pour ${SO}(2n + 1)$, {I}: une involution},
   JOURNAL = {J. Lie Theory},
    VOLUME = {28},
      YEAR = {2018},
     PAGES = {381--426}
}

\end{document}